\documentclass[12pt]{amsart}
\usepackage{amsmath, amssymb, latexsym, amsthm, amsfonts, mathtools, xparse}
\usepackage{mathrsfs}
\usepackage{url}
\usepackage{hyperref}
\usepackage{xcolor}  
\usepackage{geometry}
\mathtoolsset{showonlyrefs}
\geometry{
 includeheadfoot,
 margin=2.54cm
}

\emergencystretch=2em
\usepackage{parskip}
\usepackage{paralist}
\usepackage[T1]{fontenc}
\usepackage[backend=biber, style=alphabetic, giveninits]{biblatex}

\numberwithin{equation}{section}
\hypersetup{
    colorlinks=true, %set true if you want colored links
    linktoc=all,     %set to all if you want both sections and subsections linked
    linkcolor=blue,  %choose some color if you want links to stand out
}
\parindent=5pt
\parskip=5pt
\theoremstyle{plain}
\newtheorem{thm}{Theorem}[section]
\newtheorem{lem}[thm]{Lemma}
\newtheorem{prop}[thm]{Proposition}
\newtheorem{cor}[thm]{Corollary}
\newcommand{\thmref}[1]{Theorem~\ref{#1}}
\newcommand{\lemref}[1]{Lemma~\ref{#1}}
\newcommand{\propref}[1]{Proposition~\ref{#1}}
\newcommand{\corref}[1]{Corollary~\ref{#1}}

\theoremstyle{definition}
\newtheorem{rmk}[thm]{Remark}

\newtheorem{conjecture}[thm]{Conjecture}

\newcommand{\conjectureRef}[1]{Conjecture~\ref{#1}}

\newcommand{\psmb}{\left( \begin{smallmatrix}}
\newcommand{\psme}{ \end{smallmatrix} \right)}
\newcommand{\smat}[4]{\left( \begin{smallmatrix} #1 & #2 \\ #3 & #4 \\ \end{smallmatrix} \right)}
\newcommand{\mat}[4]{ \begin{smallmatrix} #1 & #2 \\ #3 & #4 \\ \end{smallmatrix} }

\DeclareMathOperator{\im}{Im}
\DeclareMathOperator{\tr}{tr}
\DeclareMathOperator{\rk}{rank}

\DeclareMathOperator{\sgn}{sgn}

\renewcommand*{\mod}{\operatorname{mod}}

\newcommand{\z}{\mathbb{Z}}
\newcommand{\h}{\mbb{H}}
\newcommand{\htwo}{\mbb{H}_2}
\newcommand{\hn}{\mbb H_n}
\newcommand{\lan}{\langle}
\newcommand{\ran}{\rangle}

\newcommand{\q}{\quad}

\newcommand{\prodd}{\prod \nolimits}

\newcommand*{\Q}{\mathbb{Q}}
\newcommand{\N}{\mbb{N}}
\newcommand{\R}{\mbb R}
\newcommand*{\complex}{\mathbb{C}}

\newcommand{\mbb}{\mathbb}
\newcommand{\mc}{\mathcal}
\newcommand{\mrm}{\mathrm}

\newcommand{\n}{\nonumber}
\newcommand{\mf}{\mathbf}

\newcommand{\bnk}{\mc B_k^n}

\newcommand*{\bkzz}[2]{\mbb{BK}_k(#1, #2)}
\newcommand{\snk}{S^n_k}
\newcommand{\spJn}{\sup(J^{cusp}_{k, S; (n,g)})}

\newcommand{\petf}{\lan F, F \ran}

\newcommand{\bksk}{ \sum \nolimits_{F \in \skkunit} \det(Y)^k |F(Z)|^2 }
\newcommand{\bkdegtwo}{ \sum \nolimits_{F \in \mc B_k^2} \det(Y)^k |F(Z)|^2 }
\newcommand{\sumn}{\sum \limits}

\newcommand*{\GL}[2]{\operatorname{GL}_{#1}(#2)}
\newcommand*{\SL}[2]{\operatorname{SL}_{#1}(#2)}
\newcommand*{\Sp}[2]{\operatorname{Sp}_{#1}(#2)}
\newcommand*{\GSp}[2]{\operatorname{GSp}^{+}_{#1}(#2)}
\newcommand*{\Sym}[2]{\operatorname{Sym}_{#1}(#2)}

\newcommand{\tp}[1]{#1^t}

 %Decide which format to use
%\newcommand*{\expn}[1]{e^{#1}}

\newcommand{\bkm}{ \mbb B_{k,m}(\tau,z)}

\newcommand{\saitobk}{B(\mrm{SK}_k)}
\newcommand{\sk}{\sup  (\mrm{SK}_k)}
\newcommand{\skk}{\mrm{SK}_k}
\newcommand{\skkunit}{\mc B^*_k}
\newcommand{\jk}{J_{k,1}^{cusp}}

\newcommand{\jkm}{J^{cusp}_{k,m}}
\newcommand{\jkmone}{J^{cusp, new1}_{k,m}}
\newcommand{\jkmtwo}{J^{cusp, new2}_{k,m}}
\newcommand{\jkp}{J_{k,p}^{cusp}}
\newcommand{\jkpn}{J_{k,p}^{cusp,new}}
\newcommand{\jkmn}{J_{k,m}^{cusp,new}}
\newcommand{\bksg}{\mathbb B_{k,S,g}(\tau,z)}

\newcommand{\ff}{F_f^\circ}

\newcommand{\ww}{\widetilde{W}}
\newcommand{\qtr}{Q_r(T)}

\newcommand*{\norm}[1]{\left\lVert#1\right\rVert}

\addbibresource{sup_draft.bib}

\title[sup-norms on average]{Jacobi forms, Saito-Kurokawa lifts, their Pullbacks and sup-norms on average}

\keywords{Sup-norm, Bergman kernel, Jacobi forms, Saito-Kurokawa lifts, central values of twisted $L$-functions, Eichler-Zagier maps}

\author{Pramath Anamby }
\address{Department of Mathematics\\ 
Indian Institute of Science Education and Research\\ 
Pune -- 411008, India.}
\email{pramath.anamby@gmail.com}

\author{Soumya Das}
\address{Department of Mathematics\\ 
Indian Institute of Science\\ 
Bengaluru -- 560012, India.}
\email{soumya@iisc.ac.in}

\date{}
\subjclass[2020]{Primary 11F46, 11F50, 11F66, Secondary 11F11, 11F30, 11F37}

\begin{document}

\begin{abstract}
    We formulate a precise conjecture about the size of the $L^\infty$-mass of the space of Jacobi forms on $\h_n \times \mbb C^{g \times n}$ of matrix index $S$ of size $g$. This $L^\infty$-mass is measured by the size of the Bergman kernel of the space. We prove the conjectured lower bound for all such $n,g,S$ and prove the upper bound in the $k$ aspect when $n=1$, $g \ge 1$. 
    
    When $n=1$ and $g=1$, we make a more refined study of the sizes of the index-(old and) new spaces, the latter via the Waldspurger's formula. Towards this and with independent interest,
    we prove a power saving asymptotic formula for the averages of the twisted central $L$-values $L(1/2, f \otimes \chi_D)$ with $f$ varying over newforms of level a prime $p$ and even weight $k$ as $k,p \to \infty$ and $D$ being (explicitly) polynomially bounded by $k,p$. Here $\chi_D$ is a real quadratic Dirichlet character.
    
    We also prove that the size of the space of Saito-Kurokawa lifts (of even weight $k$) is $k^{5/2}$ by three different methods (with or without the use of central $L$-values), and show that the size of their pullbacks to the diagonally embedded $\h \times \h$ is $k^2$.
    In an appendix, the same question is answered for the pullbacks of the whole space $S^2_k$, the size here being $k^3$.
\end{abstract}

\maketitle
\section{Introduction}
This article addresses the classical sup-norm problem in the context of automorphic forms for the spaces of holomorphic Jacobi and Siegel modular forms -- these correspond to automorphic forms on the Jacobi group and the Siegel modular group respectively. While Jacobi theta functions are one of the oldest examples of automorphic forms, Siegel modular forms (SMF in short) play important roles in the study of quadratic forms and higher dimensional Abelian varieties among various other things. Let $S^n_k$ be the space of holomorphic cuspidal SMF of scalar weight $k$ on $\Gamma_n:= \Sp {n}{\z}$. When $k$ is even, one has inside $S^2_k$ the space of Saito-Kurokawa lifts, denoted by $\skk$ (SK lifts in short), which are functorial lifts from elliptic modular forms of weight $2k-2$ on $\SL{2}{\z}$. On this space, there are deep and fascinating
period/pullback formulas (cf. \cite{ichino}) and conjectures about the $L^2$-`mass' (cf. \cite{liu2014growth}, \cite{BKY}) for the pullback $F^\circ$ of an SK lift $F$, where $F^\circ(\tau,\tau') = F(\smat{\tau}{0}{0}{\tau'})$, ($\tau,\tau' \in \h$). Of high contemporary interest has been to understand the $L^p$-`mass' ($1 \le p \le \infty$) of these spaces and specifically the sup-norm bounds of the pullbacks of the eigenfunctions of all Hecke operators. Here and throughout this paper, the `size' or $L^\infty$-mass of a subspace of the space of automorphic forms on a symmetric space $S$ with respect to a suitable discrete group $\Gamma$ is measured by the supremum on $S$ of its Bergman kernel (BK in short) for the subspace which is invariant under the discrete group $\Gamma$, see e.g., \eqref{skbk-def}, where $S=\hn, \Gamma=\Gamma_n$ and the subspace is $\skk$.

Let $\hn$ be the Siegel's upper half-space of degree $n$.
Let us recall that the sup-norm problem for $F \in  S^n_k$ with Petersson norm $\norm{F}_2=1$, asks for a bound of the form
$\sup \nolimits_{Z \in \hn} \det(Y)^{k/2} |F(Z)| \ll P(k)$,
where $P$ is a suitable polynomial function of $k$. This problem has been studied extensively in the past few decades for almost all kinds of automorphic objects -- and has its origin in analysis, see e.g., the introductions in \cite{sd-hk}, \cite{blo}, \cite{blomer2016supnorm} etc. for more on the context and relevance of this old and classical problem. For SMF, however, the first general result seems to be that in \cite{sd-hk}, where a precise conjecture for $P(k)$ was also mentioned (viz. $P(k) = k^{n(n+1)/8}$). The two main ingredients required for successful results on this problem are the Fourier expansion and the geometric side of the Bergman kernel of the space.

However if one works with a (smaller) \textit{sub-family}, the above-mentioned feature concerning the BK is no longer present or expected to be tractable – and thus the problem has to be dealt with differently with new ideas, as we will discuss shortly. We only mention here that our sub-family will be the SK lifts -- and that these objects also serve as a very good testing ground for various open questions, in particular the very general (and hard) conjecture on sup-norm: for any $\epsilon>0$,
\begin{equation} \label{supnormconj}
    \norm{F}_\infty := \sup \nolimits_{Z \in \hn} \det(Y)^{k/2} |F(Z)| \ll_{\epsilon,n} k^{n(n+1)/8 +\epsilon} \q \q (F \in \bnk),
\end{equation}
where $\bnk$ denotes the $L^2$-normalised Hecke basis of $\snk$, and $k \to \infty$.

If one assumes the GLH for all twisted $\mrm{GL}(2)$ $L$-functions associated to holomorphic newforms, then a result of Blomer \cite{blo} demonstrates \eqref{supnormconj} when $n=2$ for SK lifts. By all means, the best one can do at present seems to be the bound $\norm{F}_\infty \ll k^{5/4 +\epsilon}$ (see \cite[Remark~2.5]{das-seng}) by adapting the method in \cite{blo} and using the non-trivial result of Young \cite{Y} on uniform bounds for $L(1/2,f \otimes \chi_D)$ ($\chi_D$ being the quadratic character of $\Q(\sqrt{D})$ and $f \in S_{2k-2}$, a Hecke eigenform). However, we bring to the reader's attention that \textit{Young's result is technically not enough for this purpose as it is only for odd square-free $D$ at the moment\footnote{There could be a chance of removing this assumption—from private communication with M. Young, but certainly is far from anything obvious.}} -- so strictly speaking, the above result was conditional. Interestingly, the bound $\norm{F}_\infty \ll k^{5/4 +\epsilon}$ will also follow from one of our main results on the size of the Bergman kernel for $\skk$ given below, without the use of $L$-functions.

Let $\skkunit$ be the set of $L^2$-normalised basis of Hecke eigenforms in $\skk$ which are obtained as lifts of \textsl{normalised} Hecke eigenforms in $S_{2k-2}$. This convention about lifts is understood throughout the paper. It follows from \lemref{ortholem} that $\skkunit$ is orthonormal. Put
\begin{equation} \label{skbk-def}
    \saitobk(Z) := \bksk; \q \sk := \sup \nolimits_{Z \in \htwo} \saitobk(Z).
\end{equation}
We refer to the quantity $\sk$ as the `size' or $L^\infty$-mass of $\mrm{SK}_k$.
We can now state one of the main results of the paper, which gives the correct size of $\sk$.

\begin{thm}\label{mainthm1}
Let $k$ be even\footnote{For odd weights we get non-holomorphic lifts, cf. \cite{arakawa-odd}. It would be interesting to look into these objects.}. Then for all such $k$ large enough,
\begin{equation} \label{mainthm1-dis}
  k^{5/2} \ll  \sk \ll_\epsilon k^{5/2 +\epsilon}.
\end{equation}
\end{thm}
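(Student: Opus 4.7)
I would prove the two bounds in \eqref{mainthm1-dis} by somewhat different techniques, both starting from the parameterization of $\skkunit$ by the normalized Hecke eigenform basis of $S_{2k-2}(\SL{2}{\z})$ via the Saito-Kurokawa map $f \mapsto F_f$. With this identification,
\[
\saitobk(Z) = \det(Y)^k \sum\nolimits_{f} \frac{|F_f(Z)|^2}{\petff}.
\]
The two key inputs are (i) the Saito-Kurokawa formula, which expresses the Fourier coefficient $a_{F_f}(T)$ as a $d^{k-1}$-weighted divisor sum of Hecke eigenvalues $\lambda_f(n)$, and (ii) an explicit Kohnen-Zagier / Ichino-type formula for $\petff$ in terms of central $L$-values attached to $f$ and $\lan f, f \ran$.

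\textbf{Upper bound.} I would substitute the Fourier expansion of $F_f$ into the sum above, expand $|F_f(Z)|^2$, and apply Petersson's trace formula over the basis of $S_{2k-2}$ (after inserting the Kohnen-Zagier expression for $\petff$). The diagonal $T = T'$ contribution then reduces to a Rankin-Selberg-type divisor sum weighted by $e^{-2\pi \tr((T+T')Y)} \det(Y)^k$, which I would estimate to be $O_\epsilon(k^{5/2+\epsilon})$ uniformly in $Z$. The off-diagonal Kloosterman contributions would be controlled via Weil's bound together with standard $J$-Bessel function estimates and shown to be of strictly lower order.

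\textbf{Lower bound.} For the matching lower bound I would evaluate $\saitobk(Z_0)$ at a carefully chosen test point $Z_0 = iY_0$. A natural candidate is $Y_0 = \dia(t, t^{-1})$ with $t$ of polynomial size in $k$, so that $\det(Y_0) = 1$ but the two eigenvalues are highly disparate; this places $Z_0$ on the ``spine'' of the fundamental domain, where the $d^{k-1}$ weights in the Saito-Kurokawa Fourier formula combine most favorably with the exponential decay $e^{-2\pi \tr(TY_0)}$. Applying the Petersson trace formula to the resulting sum over $f$ should then extract a positive main term of order $k^{5/2}$, with the Kloosterman contribution again being negligible.

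\textbf{Main obstacle.} The principal difficulty is the uniform control of the off-diagonal Petersson terms in the upper bound, since $Z$ ranges over all of $\htwo$ and the relevant $J$-Bessel function has qualitatively different asymptotics depending on the ratio of the eigenvalues of $Y$; handling this requires case analysis on the shape of $Y$. A subtler, related point is calibrating the exact exponent $5/2$ (rather than $3/2$ or $7/2$), which depends sensitively on the exact powers of $k$ appearing in the Kohnen-Zagier formula for $\petff$ and on matching the location of the Fourier-coefficient peak to the optimal test point $Y_0$.
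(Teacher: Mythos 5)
Your lower-bound strategy has a concrete error in the choice of test point: taking $Y_0 = \dia(t,t^{-1})$ with $\det Y_0 = 1$ makes $\det(Y_0)^k = 1$ and throws away precisely the growth needed. For the test Fourier coefficient $T=I_2$ (which is what the Waldspurger formula and the first-moment average $\sum_f L(1/2,f\otimes\chi_{-4})/L(1,\mathrm{sym}^2 f)\asymp k$ feed into), the weight $\det(Y)^k e^{-4\pi\tr(TY)}$ is maximized at the \emph{balanced and large} point $Y_0 = (k/4\pi)I_2$ with $\det Y_0 \asymp k^2$; only then does the resulting $k^{2k}e^{-2k}$ cancel against the $\Gamma(2k-2)^{-1}$ arising from the Petersson-norm conversion to leave $k^{5/2}$. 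The unbalanced spine $\dia(t,t^{-1})$ is the right intuition for the \emph{individual} lower bound $\|G\|_\infty\gg k^{5/4}$ for a suitable non-eigenform $G\in\skk$ (cf. \corref{corsup}), but not for the Bergman kernel average.

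For the upper bound, applying the Petersson trace formula over $S_{2k-2}$ to the expansion $\sum_f\sum_{T,T'} a_{F_f}(T)\overline{a_{F_f}(T')}/\petff$ meets a structural obstacle you have not addressed. The Saito--Kurokawa coefficient $a_{F_f}(T)=\sum_{a\mid\mf c(T)} a^{k-1}c_{\phi,f}(\det(2T)/a^2)$ is a divisor sum of \emph{half-integral-weight / index-$1$ Jacobi} Fourier coefficients $c_{\phi,f}$, not of Hecke eigenvalues $\lambda_f(n)$; for cross terms $T\ne T'$ with distinct underlying fundamental discriminants $D_0,D_0'$ one is led to $\sum_f c_{\phi,f}(|D_0|)\,\overline{c_{\phi,f}(|D_0'|)}/\petff$, i.e.\ sums of \emph{signed square roots} of central $L$-values, and no Petersson formula on $S_{2k-2}$ evaluates these. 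The paper sidesteps this either by an initial Cauchy--Schwarz reduction so that only the diagonal $p_*(T)=\sum_F|a_F(T)|^2$ ever enters, bounded via Fourier coefficients of Jacobi Poincar\'e series (first method), or by invoking the index-$1$ Jacobi Poincar\'e series to estimate $\sum_\phi c_\phi(D_1)\overline{c_\phi(D_2)}$ directly (second method). Moreover, even after the Cauchy--Schwarz reduction the Fourier-side estimate only gives $q_*(Y)\ll k^{5/4+\epsilon}+k^{4/3}\det(Y)^{-1/4}$, which suffices solely for $\det Y\gg k^{1/3}$; for $\det Y\ll k^{1/3}$ the paper switches to the geometric side of the Jacobi Bergman kernel through the Fourier--Jacobi expansion of $F_f$, a step absent from your sketch. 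Without both the Cauchy--Schwarz reduction (or a Jacobi-side substitute for it) and the small-$\det Y$ treatment, the uniform $O_\epsilon(k^{5/2+\epsilon})$ bound does not follow.
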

We believe that the $\epsilon$ in the upper bound can be dropped, but can not see immediately how to do that. Dropping all but one term, one recovers the bound $\norm{F}_\infty \ll k^{5/4 +\epsilon}$ mentioned above. We give two proofs of \thmref{mainthm1}, neither of which use Young's result. In fact its usage gives worse results by our method for the second moment, i.e., for $\sk$ (since Young's result is on the 3rd moment) but gives good bounds for higher moments; this will be discussed later, cf. \eqref{f6}. A corollary of the upper bound in \thmref{mainthm1} can be stated as follows.
\begin{cor} \label{corsup}
Let $k$ be even. Then there exist $F \in \skkunit$ such that $\norm{F}_\infty \ll_\epsilon k^{3/4+\epsilon}$. Moreover, there exists an $L^2$-normalised $G \in \skk$ (which may not be a Hecke eigenform) such that $\norm{G}_\infty \gg k^{5/4}$.
\end{cor}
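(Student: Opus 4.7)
For the \emph{lower bound} part (existence of $G \in \skk$ with $\norm{G}_\infty \gg k^{5/4}$), I would use that $\saitobk$ is the reproducing kernel of the finite-dimensional Hilbert space $\skk$ under the Petersson inner product. Expanding any $G \in \skk$ in the orthonormal basis $\skkunit$ as $G = \sum_F \lan G, F\ran F$ and applying Cauchy--Schwarz gives
\[
\det(Y)^k |G(Z)|^2 \le \norm{G}_2^2 \cdot \saitobk(Z) \quad (Z \in \htwo),
\]
with equality attained at any fixed $Z$ by the Riesz representer $G_Z = \saitobk(Z)^{-1/2} \sum_F \overline{F(Z)} F$. Therefore
\[
\sup_{Z \in \htwo} \saitobk(Z) = \sup \{ \norm{G}_\infty^2 : G \in \skk,\ \norm{G}_2 = 1 \},
\]
and the lower bound $\sup_Z \saitobk(Z) \gg k^{5/2}$ from \thmref{mainthm1} immediately produces an $L^2$-normalized $G$ (namely the extremal $G_{Z^*}$ at the maximizer $Z^*$ of $\saitobk$) with $\norm{G}_\infty \gg k^{5/4}$.

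For the \emph{upper bound} part (existence of some Hecke eigenform $F \in \skkunit$ with $\norm{F}_\infty \ll_\epsilon k^{3/4+\epsilon}$), the strategy is pigeonhole/averaging over the Hecke basis. Its cardinality $d := |\skkunit| = \dim S_{2k-2}(\SL{2}{\z}) \asymp k$ is linear in $k$, so the pointwise upper bound from \thmref{mainthm1}---written as
\[
\sum_{F \in \skkunit} \det(Y)^k |F(Z)|^2 \ll_\epsilon k^{5/2+\epsilon}
\]
uniformly for $Z \in \htwo$---yields an average of $\ll k^{3/2+\epsilon}$ per eigenform at every point. By pigeonhole, at any fixed $Z$ there exists $F \in \skkunit$ with $\det(Y)^k |F(Z)|^2 \ll k^{3/2+\epsilon}$.

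The \emph{main obstacle} is upgrading this per-point statement to a single eigenform whose \emph{global} sup-norm satisfies $\norm{F}_\infty^2 \ll_\epsilon k^{3/2+\epsilon}$, since naive pigeonhole only controls $F$ at one $Z$ at a time while $\norm{F}_\infty = \sup_Z \det(Y)^{k/2}|F(Z)|$. I would handle this with a covering/union-bound argument: cover the relevant compact part of a fundamental domain of $\sptoz \backslash \htwo$ (outside this part the cusp forms are negligible) by a net of points at the natural scale $\asymp 1/k$ of cardinality polynomial in $k$, apply the uniform BK bound at each center to bound the number of ``bad'' eigenforms at that point, take a union bound over the net, and use a standard sub-mean value inequality for holomorphic Siegel forms of weight $k$ to pass from the discrete net-values back to $\sup_Z$, absorbing the polynomial net size into the $k^\epsilon$ factor. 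This exhibits an $F \in \skkunit$ matching the conjectural sup-norm bound for $n = 2$ predicted by \eqref{supnormconj}.
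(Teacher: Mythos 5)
Your argument for the second assertion (the existence of $G$) is essentially the paper's intended one: take $G$ proportional to $\sum_{F\in\skkunit}\overline{F(Z^*)}\,F$ at a point $Z^*$ maximizing $\saitobk$, normalize, and apply the lower bound of \thmref{mainthm1}; this extremal-function construction is exactly what the paper cites from \cite[Corollary~1.5]{sd-hk}. (Minor slip: with the normalizer $\saitobk(Z)^{-1/2}$ your $G_Z$ has $\norm{G_Z}_2=\det(Y)^{-k/2}$, not $1$; the right constant is $(\sum_F|F(Z^*)|^2)^{-1/2}$, which differs from yours by $\det(Y^*)^{k/2}$ and then gives $\det(Y^*)^k|G(Z^*)|^2=\saitobk(Z^*)\gg k^{5/2}$ as desired.)

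For the first assertion (some $F\in\skkunit$ with $\norm{F}_\infty\ll_\epsilon k^{3/4+\epsilon}$) you have correctly put your finger on the real difficulty — Markov at a single $Z$ only controls $\det(Y)^k|F(Z)|^2$ at that one point — but the covering-and-union-bound fix does not close the gap, and the parenthetical claim that ``the polynomial net size'' can be ``absorbed into the $k^\epsilon$'' is precisely where it breaks. If the net has $N=k^A$ points, then at threshold $T$ the union of the exceptional sets $\{F:\det(Y_j)^k|F(Z_j)|^2\ge T\}$ has cardinality $\ll Nk^{5/2+\epsilon}/T$; demanding this be smaller than $\dim\skk\asymp k$ forces $T\gg k^{A+3/2+\epsilon}$, and after the sub-mean-value step one gets at best $\norm{F}_\infty\ll k^{(A+3/2)/2+\epsilon}$. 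Any net at mesh $\ll 1$ covering a fixed-volume piece of the $6$-real-dimensional domain $\sptoz\backslash\htwo$ has $A\ge 1$, and already $A=1$ only reproduces the trivial bound $k^{5/4+\epsilon}$ (the one obtained by dropping all but one term in $\saitobk$); any finer mesh — and a sub-mean-value step at the actual variation scale of $\det(Y)^k|F|^2$ requires one — makes matters strictly worse. Passing from a uniform Bergman-kernel bound to a bound on the \emph{global} sup-norm of a single member of the Hecke basis genuinely needs more than Markov plus a net; note the paper itself does not spell out this half of the corollary either (it only refers to \cite{sd-hk} for the $G$ assertion), so you should look there for the intended mechanism rather than rely on the covering argument.
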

For the assertion about $G$, see the construction in \cite[Corollary~1.5]{sd-hk} which we do not repeat here.

We mention here the related recent result from \cite{sd-hk} which proves, among other things, that 
\begin{equation} \label{sups2k}
k^{9/2} \ll  \sup \nolimits_{Z \in \htwo} \bkdegtwo \ll k^{9/2+\epsilon},
\end{equation}
from which one gets the existence $F \text{ or } G \in S^2_k$ satisfying the above corollary (and evidence for \eqref{supnormconj}) -- but one can not however pin $F$ or $G$ down to $\skk$. But this is desirable, given that the SK-lifts contribute much smaller amount to the mass coming from $S^2_k$ as follows from \thmref{mainthm1}. So \corref{corsup} is new, and interesting because $F$ satisfies \eqref{supnormconj}. In contrast, from \cite{blo} we know the existence of an $H \in \skkunit$ with $\norm{H}_\infty \gg_\epsilon k^{3/4-\epsilon}$, which suggests that \eqref{supnormconj} is perhaps specific for Hecke eigenforms. Lastly, let us note that \thmref{mainthm1} is consistent with \eqref{supnormconj}, and along with \eqref{sups2k} provides (stronger) evidence towards \eqref{supnormconj}.

Our knowledge for the sup-norm of an individual $F  \in S^2_k$ is unfortunately very limited. Best is what one can obtain from the Bergman kernel, viz. $\norm{F}_\infty \ll_\epsilon k^{9/4+\epsilon}$ (cf. \eqref{sups2k}). Ideally, one would hope that for SK-lifts one could do better. For this one would require robust bounds on the geometric side of $\saitobk(Z)$.
However, as far as we could see, there is no `immediately useful' geometric side of the Bergman kernel for this space (cf. section~\ref{bkgeo}), as the definition of this space is via the Fourier expansion. To us, it seems that the best avenue for amelioration over the bound $k^{5/4+\epsilon}$ would be via an improvement of Young's bound \cite{Y} (\textit{for all fundamental discriminants}) in the twist aspect, this is demonstrated in section~\ref{subconvex}. We understand that this is a very hard problem in itself.

More precisely, if we define higher moments by $\mathscr F_{2r} := \sup_{Z \in \htwo}\sumn_{F \in \mc B_k^*} \det(Y)^{kr} |F(Z)|^{2r}$ (for $\mc B_k^*$ see \eqref{skbk-def}), and assume that (i.e. go beyond the Weyl-type bound in the twist aspect) for all fundamental discriminants $D$:
\begin{equation} \label{Ydemand}
 \sumn_{f\in  B_k} L(1/2, f \otimes \chi_D)^3\ll_\epsilon k^{1 +\epsilon} D^{1- \delta +\epsilon}
\end{equation} 
($B_k$ being the normalised Hecke basis) for some absolute positive constant $\delta$, it follows (see the end of in section~\ref{subconvex}) that
\begin{equation} \label{f6}
    \mathscr F_{6} \ll_\epsilon k^{15/2 - 2\delta +\epsilon}
\end{equation}
for all $\epsilon >0$. From this we would immediately get a saving over the exponent $5/4$ alluded to in the above. Note that one has $\mathscr F_{2r} \ll k^{5r/2}$ as an application of \thmref{mainthm1}.
Of course, the exponent of $k$ in \eqref{Ydemand} can not be improved (except may be the $\epsilon$) since by H\"older's inequality, we see that (see e.g., \cite[Lemma 3]{blo} for the first assertion below)
\begin{align} \label{holder-k}
  k \asymp_D  \sumn_{f \in  B_{2k-2}} L(k-1, f\otimes \chi_D) & \ll_\epsilon (\sumn_f 1)^{2/3} (\sumn_f L(k-1, f\otimes \chi_D)^{3})^{1/3} ,
\end{align}
from which the lower bound $k$ follows for the cubic moment in \eqref{Ydemand}. We call the proof of \eqref{f6} as the `third' method of proof of \thmref{mainthm1} which uses bounds on central $L$-values (with $\delta=0$), even though it is conditional on assuming \eqref{Ydemand} for all $D$.

We now say something about the `first' and `second' methods of proofs of \thmref{mainthm1}. To discuss the first method, we are led to the second main topic of this paper: the investigation of the sup-norm problem in the context of Jacobi forms. Our motivation here is two-fold – for one, this seems to be the first investigation of the problem for \textit{non-reductive} Lie groups viz. the Jacobi groups, and two, the results obtained thus have application to solving the same question for SK lifts. 

Let us recall some basic notation on Jacobi forms of matrix (or lattice) index. Let  $\Lambda_g^+$ be the set of all $g\times g$ symmetric, positive--definite, half-integral matrices  (i.e., $T \in \Lambda^+_g$ means $T=(t_{ij})$ with $ 2t_{ij},\;t_{ii}\in \z$) and $M_{g,n}(\mbb C)$ denote the set of $g \times n$ complex matrices. For any $S\in \Lambda_g^+$,  $J_{k, S; (n,g)}$ (resp. $J^{cusp}_{k, S; (n,g)}$) denotes the space of holomorphic Jacobi forms (resp. Jacobi cusp forms) of weight $k$ and index $S$ on $\hn \times M_{g,n}(\mbb C)$. Let $\mc B_{k,S;(n,g)}$ denote an orthonormal basis for $J^{cusp}_{k, S; (n,g)}$ and put (for $\tau (:=u+iv) \in \hn, z (:=x+iy) \in M_{g,n}(\mbb C)$)
\begin{align}
\mathbb{B}_{k,S;(n,g)}(\tau,z)&:=\sum \nolimits_{\phi\in\mathcal{B}_{k,S;(n,g)}} (\det v)^{k} e^{-4\pi \mrm{tr}( S v^{-1}[y] )} |\phi(\tau,z)|^2;\label{mBK} \\
\sup(J^{cusp}_{k, S; (n,g)})&:= \sup \nolimits_{(\tau, z)\in \hn \times M_{g,n}(\mbb C)}\mathbb{B}_{k,S;(n,g)}(\tau,z). \label{bkS2}
\end{align} 
It is easily checked that for any $u \in \GL{n}{\z}$ one has 
\begin{equation}
    \mathbb{B}_{k,S;(n,g)}(\tau,z) = \mathbb{B}_{k,S[u];(n,g)}(\tau, (u^t)^{-1} z),
\end{equation}
which implies that the sup-norms as defined in \eqref{bkS2} corresponding to $S$ and $S[u]$ are the same. Henceforth, we thus assume $2S$ to be reduced. The invariants \eqref{mBK} and their maximum values in \eqref{bkS2} are the other main protagonists of this work; and we propose a precise conjecture about their sizes.

\begin{conjecture} \label{jacobi-conj}
\begin{align}
 \sup(J^{cusp}_{k, S; (n,g)}) \asymp k^{\frac{3n(n+1)}{4} + \frac{gn}{2}} \det(2S)^{\frac{n}{2}}, \q \q \text{as } k \cdot \det(2S) \to \infty;
\end{align}

with the implied constant depending only on $g,n$. Practically the above conjecture should be thought of separately in the $k$ or $S$ aspect, fixing the other component.
\end{conjecture}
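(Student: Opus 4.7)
The plan is to split the conjecture into a lower bound (for all $n,g,S$) and an upper bound (tractable in the $k$-aspect for $n=1$, much harder otherwise), using the well-tested philosophy that Bergman kernels of spaces of holomorphic cusp forms are bounded below via Fourier expansions / Poincar\'e series and above via a combination of Fourier and geometric sides.

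\textbf{Lower bound.} I would evaluate $\mathbb{B}_{k,S;(n,g)}$ at the point $(\tau_0, z_0) = (i Y_0, 0)$ with $Y_0 = c k^{-1}\, \mathrm{id}_n$ for a small constant $c > 0$ (to be optimized). Expanding in an $L^2$-orthogonal family of Poincar\'e series $P^{k,S}_{T,R}$ indexed by Fourier data $(T, R)$, with $T \in \Lambda_n^+$ and $R \in \mathrm{M}(g,n;\mathbb{Z})$ satisfying the positivity $T - S^{-1}[R]/4 \geq 0$, the reproducing property gives
\begin{equation*}
\mathbb{B}_{k,S;(n,g)}(i Y_0, 0) \;\geq\; \det(Y_0)^k \sum_{(T, R)} \frac{|a_{T,R}|^2}{\|P^{k,S}_{T,R}\|^2}\, e^{-4\pi \mathrm{tr}(T Y_0)}.
\end{equation*}
The Petersson norm $\|P^{k,S}_{T,R}\|^2$ is computed via Rankin--Selberg unfolding (the diagonal term of the Kloosterman-type geometric side), contributing the expected gamma and zeta factors. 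Plugging in, the problem reduces to counting admissible Fourier parameters in the box $\mathrm{tr}(T Y_0) \ll 1$: the constraint on $T$ gives $\asymp k^{n(n+1)/2}$ choices, and for each such $T$ there are $\asymp \det(2S)^n$ admissible $R$'s coming from a lattice-point count. Combining with $\det(Y_0)^k \asymp k^{-n}$ and the Petersson normalizations yields the claimed lower bound $k^{3n(n+1)/4 + gn/2}\det(2S)^{n/2}$ after optimizing $c$.

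\textbf{Upper bound for $n = 1$ in the $k$-aspect.} Invoke the theta decomposition $\phi(\tau, z) = \sum_{\mu \in \mathbb{Z}^g / 2S\mathbb{Z}^g} h_\mu(\tau)\,\theta_{S,\mu}(\tau, z)$, which realizes $J^{cusp}_{k, S; (1, g)}$ isometrically (up to an explicit constant) inside a space of vector-valued half-integral weight modular forms of weight $k - g/2$ on a congruence cover of $\mathrm{SL}_2(\mathbb{Z})$. Cauchy--Schwarz in $\mu$ separates $|\phi(\tau,z)|^2 v^k e^{-4\pi \mathrm{tr}(S v^{-1}[y])}$ into a half-integral-weight Bergman-kernel sum (controlled by the classical Fourier + geometric sides on $\mathbb{H}$, yielding $k^{3/2+\epsilon}$) times a theta-sum $\sum_\mu |\theta_{S,\mu}(\tau, z)|^2 e^{-4\pi \mathrm{tr}(S v^{-1}[y])} v^{g/2}$. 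The latter is $\Gamma_J$-invariant, and near the cusp $v \sim k^{-1}$, $z = 0$ it is bounded via Poisson summation by $k^{g/2}\det(2S)^{1/2}$: the Gaussian localizes onto $\asymp (k/\det(2S)^{1/g})^{g/2}$ lattice points, weighted by $v^{g/2}\det(2S)^{1/2}$. Multiplying gives the conjectured $k^{3/2 + g/2}\det(2S)^{1/2}$ in the $k$-aspect.

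\textbf{Main obstacle.} The serious difficulty is the upper bound for $n \geq 2$. The geometric side of the Bergman kernel for the Jacobi group over $\hn$ involves Kloosterman-type sums with intricate orbit structure that is not well understood beyond rank one, and the theta decomposition here produces vector-valued Siegel modular forms of weight $k - g/2$ on $\hn$ for which sharp sup-norm bounds are themselves open problems. A secondary obstacle is cleanly matching the $k$- and $\det(2S)$-aspects in a single argument: the two aspects typically call for different optimizations of $(\tau_0, z_0)$ and different Fourier-counting strategies, and interpolating between them to hit the exact exponent in the conjecture is delicate; it is presumably for these reasons that only the lower bound, and only the $k$-aspect of the upper bound for $n=1$, are proved in this paper.
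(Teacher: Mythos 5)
Since the statement is a conjecture, what you must be judged on is whether your sketch of the \emph{provable parts} (lower bound for all $n,g,S$; upper bound for $n=1$ in the $k$-aspect) matches what can actually be carried out. Your high-level framing is correct -- that split is exactly what the paper does (Proposition~\ref{prop:low} and Theorem~\ref{mainthm2}) -- but both of your sketches diverge from the paper's arguments in ways that create real gaps.

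\textbf{Lower bound.} The paper extracts a \emph{single} Fourier coefficient $(T,R)=(1_n,0)$ via the Cauchy--Schwarz / period-integration step, applies the Petersson trace formula \eqref{PetTr} to get $\sum_\phi |c_\phi(1_n,0)|^2$ in terms of $C(1_n,0;P^{1_n,0}_{k,S})$, and then uses the non-quantitative asymptotic orthogonality $C(1_n,0;P^{1_n,0}_{k,S})\gg 1$ (Proposition~\ref{prop:asymporth}) together with Stirling's formula, with the optimal point $v_0 = \tfrac{k}{4\pi}1_n$ and $y_0=0$. You instead propose summing over a \emph{box} of $\asymp k^{n(n+1)/2}$ matrices $T$ with $\operatorname{tr}(TY_0)\ll 1$ at $Y_0\asymp k^{-1}1_n$. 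This is a genuinely different route, and it fails for two reasons. First, your power counting is off: $\det(Y_0)^k\asymp k^{-nk}$, not $k^{-n}$; the exponential factor $k^{-nk}$ must cancel against $\Gamma_n(k-\cdot)^{-1}$ and $\det(4T-S^{-1}[R])^{k}$ in the Petersson normalization \eqref{lambda-def}, and once you do that cancellation correctly the optimization again puts $Y_0\asymp k$ and collapses to the single-coefficient argument. Second, and more fundamentally, summing over a box of $T$ growing with $k$ would require a \emph{uniform} quantitative estimate for the Fourier coefficients of the Siegel--Jacobi Poincar\'e series over that range; the paper explicitly points out (before \eqref{asymp-ortho-intro}) that such quantitative asymptotics are not available for $n\ge 3$, which is precisely why Proposition~\ref{prop:asymporth} is stated as a rate-free limit. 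Your sketch therefore presupposes an input that is not at hand.

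\textbf{Upper bound ($n=1$, $k$-aspect).} Your theta-decomposition-plus-Cauchy--Schwarz idea is indeed the engine of the paper's geometric-side bound (Section~\ref{bkjkmsec}, culminating in Lemma~\ref{lemBKGeo}), so you have correctly identified one of the two ingredients. But the paper does not obtain the $k$-aspect bound in a single step; it combines Fourier-expansion bounds (Lemma~\ref{lemqKS}, built on Poincar\'e-coefficient estimates from Proposition~\ref{proppkmnr} and the careful Bessel/Salie analysis feeding Lemma~\ref{lemQ1}) with the geometric-side bound, and then splits the fundamental domain $\mc F^J$ into four regions in $v$ depending on $\ell$ and $\det(2S)$ (Section~4.3). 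Your sketch skips this entirely. Also, the phrase ``near the cusp $v\sim k^{-1}$'' is misplaced: the invariance of the weighted kernel lets one restrict to $\mc F^J$ where $v\ge \sqrt 3/2$, and the delicate regions are $v\asymp \ell^{1/2+\epsilon}\det(2S)$ and $v\asymp \ell$, not $v\sim k^{-1}$. And your claim that the half-integral-weight Bergman sum is $k^{3/2+\epsilon}$ needs the extra step that the theta decomposition is an isometry onto a \emph{subspace} of vector-valued forms (so that one may bound above by the full-space Bergman kernel), plus the unitarity of the Weil representation matrices $\rho_S(\gamma)$, both of which the paper invokes explicitly; as written your sketch leaves the $\det(2S)$ dependence of both factors uncontrolled, which is exactly why the paper's index-aspect bound in Proposition~\ref{jacobi-BK} comes out as $\det(2S)^{g/2+3/2+\epsilon}$ rather than the conjectured $\det(2S)^{1/2}$.

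\textbf{Main obstacle.} Your diagnosis of the obstacle for $n\ge 2$ (lack of geometric-side control, intractable Kloosterman orbits) matches the paper's remarks, so that part is sound.
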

In fact, we go a step further and propose a conjecture about the sup-norm of an individual Jacobi form of scalar index. 

\begin{conjecture} \label{scalar-conj}
For an $L^2$-normalised Jacobi form $\phi \in J^{cusp}_{k,m, (n,1)}$, one has $\norm{\phi}_\infty \ll k^{(n^2+3n)/8} m^{-n/2}$ as $k \cdot m \to \infty$.
\end{conjecture}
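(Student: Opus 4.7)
The natural attempt at Conjecture~\ref{scalar-conj} goes through the classical theta expansion of a Jacobi form of scalar index $m$,
$$\phi(\tau, z) = \sum_{\mu \in (\mathbb{Z}/2m\mathbb{Z})^n} h_\mu(\tau)\, \Theta_{m,\mu}(\tau, z),$$
where the $\Theta_{m,\mu}$ are the Jacobi--Siegel theta series of weight $n/2$ and index $m$ on $\hn \times M_{1,n}(\mathbb{C})$, and $(h_\mu)_\mu$ is a vector-valued Siegel cusp form on $\Gamma_n$ of weight $k-n/2$ transforming under the Weil representation of rank $(2m)^n$ associated to the lattice $(\mathbb{Z}^n, m\, I_n)$. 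Cauchy--Schwarz then gives
$$(\det v)^k e^{-4\pi m\, y v^{-1} y^t}\, |\phi(\tau, z)|^2 \;\leq\; A(\tau)\cdot B(\tau, z),$$
with $A(\tau) := (\det v)^{k-n/2}\sum_\mu |h_\mu(\tau)|^2$ and $B(\tau, z) := (\det v)^{n/2} e^{-4\pi m\, y v^{-1} y^t} \sum_\mu |\Theta_{m,\mu}(\tau, z)|^2$.

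The factor $B$ is the easy piece. A Poisson summation in the $\mu$-variable, combined with the Gaussian weight $e^{-4\pi m\, y v^{-1} y^t}$, gives a heat-kernel bound $B(\tau, z) \ll m^{n/2}$, uniformly in $(\tau, z)$ on the Siegel fundamental domain, and unconditionally. One only has to be careful in tracking the $n$- and $m$-dependence, but no real obstacle appears here.

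The hard factor is $A(\tau)$, whose supremum is precisely the value of the Bergman kernel on the diagonal for the space of vector-valued half-integer weight Siegel cusp forms of weight $k-n/2$ on a congruence cover. The $L^2$-normalisation of $\phi$ combined with the standard orthogonality of the $\Theta_{m,\mu}$ in the $z$-variable (an Eichler--Zagier-type identity) pins down $\sum_\mu \|h_\mu\|_2^2$ with the correct scaling in $m$ and $n$. Granting a Blomer-type individual sup-norm conjecture for such vector-valued half-integer weight Siegel cusp forms in the joint $(k,m)$-aspect, the desired estimate on $A$ would follow, and multiplying by the $m^{n/2}$ bound on $B$ recovers the predicted $k^{(n^2+3n)/8}\, m^{-n/2}$.

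The main obstacle is precisely this last input: the sup-norm conjecture for vector-valued half-integer weight Siegel cusp forms in the weight aspect is itself wide open for $n \ge 2$, and is at least as demanding as \eqref{supnormconj} itself. Even for $n=1$, where $(h_\mu)_\mu$ is a tuple of half-integer weight elliptic cusp forms on $\Gamma_0(4m)$ with an appropriate multiplier, one still needs an amplified pre-trace formula in the spirit of Templier or Harcos--Templier with fully explicit $m$-dependence, which is delicate and not fully available in the literature. An alternative route would be to run amplification directly on the Jacobi Bergman kernel $\mathbb{B}_{k,m;(n,1)}$, bypassing the theta decomposition altogether; but this requires a workable geometric expansion of that kernel, which -- as noted earlier in this introduction in the context of the Saito--Kurokawa Bergman kernel -- is not currently available.
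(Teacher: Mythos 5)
The statement you are addressing is labeled a \emph{conjecture} in the paper; it is never proved there, so there is no ``paper's proof'' to compare against. The only justification the paper offers is the one-line remark immediately after the statement: Conjecture~\ref{scalar-conj} and Conjecture~\ref{jacobi-conj} are said to be consistent with one another assuming Klingen's conjectural dimension formula $\dim J^{cusp}_{k,m,(n,1)} \ll k^{n(n+1)/2} m^n$. You are of course free to sketch a route, and you correctly recognize that no unconditional proof is within reach; what you have written is a conditional reduction, not a proof.

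Your theta-decomposition route ($\phi = \sum_\mu h_\mu \Theta_{m,\mu}$, Cauchy--Schwarz to split into the vector-valued Bergman factor $A(\tau)$ and the theta factor $B(\tau,z)$, then a sup-norm hypothesis on the vector-valued half-integral weight piece) is genuinely different in character from the paper's dimension-count remark: it makes the structural reduction explicit rather than merely checking exponents numerically. But there are two concrete problems with the sketch as written. First, the claimed uniform bound $B(\tau,z) \ll m^{n/2}$ is not correct across the whole Siegel fundamental domain. The paper's own computations (see Lemma~\ref{lemBKGeo}, in particular the bound $\Theta^*(\tau,z) \ll e^{4\pi S[y^t]/v}\prod_j\bigl(1+\sqrt{v s_j^{-1}}\bigr)$ and the identity $\sum_\mu |\theta_{S,\mu}|^2 = (\det S)^{1/2} v^{-g/2}\Theta^*$) show that for scalar index $m$ and $n=1$ one only gets $B \ll m^{1/2}\bigl(1+\sqrt{v/m}\bigr)$, which is unbounded as $v$ grows past $m$. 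The paper deals with this by splitting the fundamental domain into $v$-ranges and using the Fourier side where the geometric side fails; your sketch calls this ``the easy piece'' with ``no real obstacle,'' which is not accurate. Second, you assert without any bookkeeping that the reduction ``recovers the predicted $k^{(n^2+3n)/8} m^{-n/2}$.'' That deserves scrutiny: the paper's own dimension heuristic --- divide the conjectured size $k^{3n(n+1)/4+n/2}\det(2S)^{n/2} \asymp k^{(3n^2+5n)/4} m^{n/2}$ of the Bergman kernel by Klingen's $\dim \ll k^{n(n+1)/2} m^n$ and take a square root --- produces $k^{(n^2+3n)/8} m^{-n/4}$, not $m^{-n/2}$. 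So it is far from clear that the $m$-exponent you claim to recover is the one that a clean reduction would actually yield; if you carry the normalization $\sum_\mu\|h_\mu\|^2 \asymp m^{n/2}$ through carefully, this discrepancy should surface and needs to be resolved before the exponent chase can be trusted.

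The final point you make --- that the needed sup-norm conjecture for vector-valued half-integral weight Siegel cusp forms in the joint $(k,m)$-aspect is at least as open as the target --- is correct and well taken. The reduction is therefore best viewed as a sanity check on the shape of the conjecture, not progress toward proving it, and even as a sanity check the uniformity of the $B$-bound and the $m$-exponent need to be handled with the same care the paper applies in Sections~\ref{jacobi-supnorm} and~\ref{old-strange1}.
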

One should note that \conjectureRef{scalar-conj} and \conjectureRef{jacobi-conj} are compatible with each other if we believe in Klingen's conjecture that $\dim (J^{cusp}_{k,m, (n,1)} ) \ll k^{n(n+1)/2} m^n$ (cf. \cite[Remark~p.~683]{klingen-jacobi}). One can propose a conjecture above for any $g \ge 1$, provided a good asymptotic formula for the dimension is known. This seems not available at the moment. \conjectureRef{scalar-conj} is expected especially for Jacobi Hecke eigenforms. 

%{\it We emphasize here that the lower bound in \conjectureRef{jacobi-conj} essentially holds for all $n,S$, see \propref{prop:low} and the discussion there.}
To understand the lower bound, one natural way would be to understand the size of $\sum_\phi |c_\phi(l,r)|^2$, where $\phi $ runs over an orthonormal basis of $J^{cusp}_{k, S; (n,g)}$ as $k \to \infty$.
This is unfortunately not available quantitatively as of now when $n \ge 3$ (not even for SMF), but one can prove a qualitative result (without an error term) as a substitute. Using the interpretation via the Fourier coefficients of Poincar\'e series, we prove (slightly more generally) that

\begin{equation} \label{asymp-ortho-intro}
 \lim \nolimits_{k \to \infty}   C(l',r';P_{k,S}^{l,r}) = \delta_S(l,r; \, l',r'),
\end{equation}

where $C(l',r';P_{k,S}^{l,r})$ denotes the $(l',r')$-th Fourier coefficient of the Jacobi Poincar\'e series $P_{k,S}^{l,r}$ and $\delta_S(..)$ is a certain count of automorphisms, see section~\ref{asymp-ortho}. Note that $C(l,r;P_{k,S}^{l,r})$ is proportional to $\sum_\phi |c_\phi(l,r)|^2$, where $c_\phi(l,r)$ denotes the $(l,r)$-th Fourier coefficient of the Jacobi form $\phi$. For SMF of degree $n$ and even weights $k$, this was proved in \cite{kowalski2011note}; our proof is quite different from that, and has a simpler yet strong term-wise decay of the terms of Poincar\'e series, see \lemref{det>1} and subsequent arguments. Using all of these, we show in Proposition \ref{prop:low} that the lower bound indeed holds in \conjectureRef{jacobi-conj}.

When $n=1$, we prove the following result concerning the size of  $J^{cusp}_{k,S,g} $ where $J^{cusp}_{k,S,g} := J^{cusp}_{k,S,(1,g)}$. See \propref{prop:low} and \eqref{n1lbd}, which shows further that the lower bound below actually holds for all $n,S$ as $k \to \infty$.

\begin{thm} \label{mainthm2}
Let $k$ be even. Then for any $\epsilon>0$ there exist $k_0 \ge 1$ depending only on $g$ such that for all $k \ge k_0$,

\begin{align} \label{kS}
  k^{\frac{3}{2} + \frac{g}{2}} \det(2S)^{\frac{1}{2}} \ll  \sup(J^{cusp}_{k,S,g}) \ll_\epsilon k^{\frac{g+3}{2}+\epsilon} \det(2S)^{\frac{g+3}{2}+\epsilon}.
\end{align}
\end{thm}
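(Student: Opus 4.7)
For the lower bound, I would invoke \propref{prop:low}, whose statement, in conjunction with the asymptotic orthogonality in \eqref{asymp-ortho-intro}, yields $\sup(J^{cusp}_{k,S;(n,g)}) \gg k^{3n(n+1)/4+gn/2}\det(2S)^{n/2}$ for arbitrary $n, g, S$ and $k$ large; specialising to $n=1$ gives the stated lower bound $k^{(g+3)/2}\det(2S)^{1/2}$. The strategy underlying that proposition selects a fixed Fourier index $(l,r)$ with $D_{l,r}:=4l-S^{-1}[r]$ small enough that the Poincar\'e series coefficient $C(l,r;P_{k,S}^{l,r})$ is close to its limiting value by \eqref{asymp-ortho-intro}, and then evaluates $\bksg$ at a point $(\tau,z)$ tuned so that the corresponding exponential factor is maximal.

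For the upper bound, I would work with the Fourier expansion together with a Petersson-type trace formula for Jacobi forms of matrix index. Writing $\phi(\tau,z)=\sum_{(l,r)}c_\phi(l,r)\,e(l\tau+r^tz)$ in an orthonormal basis, with pairs $(l,r)\in\mathbb{Z}\times\mathbb{Z}^g$ satisfying $D_{l,r}>0$, the plan is to open
\[
\bksg = v^k e^{-4\pi \tr(Sv^{-1}[y])} \sum_{(l_1,r_1),(l_2,r_2)} \Bigl(\sum_\phi c_\phi(l_1,r_1)\overline{c_\phi(l_2,r_2)}\Bigr) e(l_1\tau - l_2\bar{\tau} + r_1^tz - r_2^t\bar{z}),
\]
and then to decompose the inner coefficient sum via Petersson into a diagonal term (at $(l_1,r_1)=(l_2,r_2)$) of size $(4\pi D_{l,r})^{k-g/2-3/2}/\Gamma(k-g/2-3/2)$ modulo a $\det(2S)^{1/2}$ normalisation, plus an off-diagonal sum of Kloosterman-type sums weighted by Bessel functions $J_{k-g/2-3/2}$.

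The diagonal contribution is handled by completing the square in $r$ inside the exponential $e^{-\pi v S^{-1}[r] - 4\pi r^t y}$ (which precisely cancels the Gaussian prefactor $e^{-4\pi \tr(Sv^{-1}[y])}$), performing the resulting Gaussian sum over $r\in\mathbb{Z}^g$ (contributing a factor of order $\det(S)^{1/2}v^{-g/2}$), and evaluating the residual $l$-sum by Stirling's formula. The off-diagonal contribution is bounded via Weil-type estimates for the Jacobi Kloosterman sums in the matrix-index setting together with standard bounds on $J_\nu$, which supply the remaining powers of $k$ and $\det(2S)$. Summing the two contributions yields the claimed bound $k^{(g+3)/2+\epsilon}\det(2S)^{(g+3)/2+\epsilon}$.

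The main obstacle will be the control of the Jacobi Kloosterman sums in the matrix-index setting: Weil-type bounds do yield uniformly polynomial estimates, but their dependence on $\det(2S)$ is far from sharp, which explains the substantial gap between the conjectured $\det(2S)^{1/2}$ and the obtained $\det(2S)^{(g+3)/2+\epsilon}$. In the $k$-aspect, by contrast, the interplay of Bessel decay with Stirling's formula should give an upper bound that is tight up to an arbitrarily small power, matching the lower bound. A secondary technical point is ensuring uniformity of the Laplace-type estimates as $(\tau,z)$ ranges over the fundamental domain of the Jacobi group.
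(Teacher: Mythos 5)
Your lower-bound plan matches the paper's: \propref{prop:low} together with the asymptotic orthogonality of Poincar\'e coefficients gives $\sup(J^{cusp}_{k,S;(n,g)}) \gg k^{3n(n+1)/4+gn/2}\det(2S)^{n/2}$, and $n=1$ is the stated bound. One small point you gloss over: \propref{prop:low} as stated is a $k\to\infty$ statement, whereas \thmref{mainthm2} asserts the bound for all $k\ge k_0(g)$. The paper handles this (see \eqref{n1lbd}) by invoking the \emph{explicit} $n=1$ evaluation of $C(1,0;P_{k,S}^{1,0})$ via \propref{proppkmnr}, which gives an effective $k_0$ depending only on $g$; the qualitative dominated-convergence argument behind \propref{prop:asymporth} would not supply that.

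For the upper bound there is a genuine gap. Your plan is a purely Fourier-side argument: open the Bergman kernel as a double sum over Fourier indices, use the Petersson formula \eqref{PetTr}, and control the diagonal via Gaussian sums plus Stirling and the off-diagonal via Weil bounds for Jacobi Kloosterman sums and Bessel decay. That is not how the paper proceeds, and — more importantly — such an argument is not shown to close. The paper never analyses the off-diagonal $(l_1,r_1)\neq(l_2,r_2)$ terms directly; instead it applies Cauchy--Schwarz (following \cite[Lemma~4.4]{sd-hk}) to reduce $\bksg$ to the single sum $q_{k,S}(y,v)=\sum_{(n,r)} p_{k,S}(n,r)^{1/2} v^{k/2}e^{-2\pi nv-2\pi ry}$ of \emph{diagonal} Petersson data, with the Kloosterman/Bessel input already absorbed into the bound \eqref{p(T)bound} for $p_{k,S}(n,r)$. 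Crucially, this Fourier-side bound (\lemref{lemqKS}) is only sufficient in part of the fundamental domain; the paper splits $\mc F^J$ into three ranges of $v$ (subsections 4.3.1--4.3.3) and in the regime $\ell \ll v \ll \ell\det(2S)^{1+\epsilon}$, as well as in the subrange $v\ll\ell$, $\det(2S)\ll\ell^{1/2-\epsilon}$, it abandons the Fourier expansion entirely and switches to the \emph{geometric side} of the Bergman kernel — the theta-decomposition of \eqref{BKGeometric} from Arakawa, the unitarity of the Weil-type representation $\rho_S(\gamma)$, and the estimate $M(\tau)\ll 1+v\ell^{-1/2+\eta}+v\ell^{-A}$ from \cite{RSSreal}, culminating in \lemref{lemBKGeo}. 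Your proposal contains none of this: no region decomposition, no theta-decomposition, no use of the hyperbolic sum $M(\tau)$. Without it, the Fourier-side bound blows up as $v$ grows beyond $\ell^{1/2+\epsilon}\det(2S)$ (the congruence count $\mc C_v$ in \eqref{cv} and the factor $v^{g/2+1}$ in $q_{k,S}^2$ become too large), and keeping the full off-diagonal double sum rather than applying Cauchy--Schwarz does not obviously repair this: the Bessel functions $J_{k-g/2-3/2}(4\pi\sqrt{D_1D_2}/c)$ are not in their decay regime for $c$ small and $D_i\asymp k/v$, and no cancellation from the Kloosterman sums is demonstrated in this range. So the assertion that "Bessel decay with Stirling's formula should give an upper bound that is tight" is precisely where the argument would fail.
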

One immediate application of \conjectureRef{jacobi-conj} or any result like \thmref{mainthm2} is towards estimating the dimension of the ambient spaces of automorphic forms (see e.g., \cite[Corollary~1]{zhou2010dimension} in our context). In the case of Jacobi forms, one gets very easily a polynomial bound for the dimension by using the Conjecture \ref{jacobi-conj}. Of course, good point-wise bounds for the BK are more useful--see e.g., \cite{sd-hk}.

Comparing with \conjectureRef{jacobi-conj} we see that in the $k$ aspect, the above theorem proves this conjecture. However, in the index aspect it is quite far away from the expectation. But still \eqref{kS} is not void in the $S$-aspect if we consider even unimodular lattices $2S$ (for $8|g$).
We note that the index has some similarities with the level of a modular form, in view of the Hecke equivariant isomorphism between $J_{k,m}$ and a certain subspace of $ M_{2k-2}(m)$. And then the above issue is not very surprising, as superior `hybrid' (weight along with level) aspect results are very rare on the sup-norm problem.

When $n=1,g=1$, we are in the setting of classical Jacobi forms $J_{k,m}$.
Then the `first' method of proof of \thmref{mainthm1} uses (in a small region) bounds for the geometric side of the Bergman kernel for $J^{cusp}_{k,m}$.
Moreover, for index $m=l^2$ ($l \ge 1$), we point out to the reader a peculiar situation: the contribution of `index-old' Jacobi forms $U_l (J^{cusp}_{k,1})$ inside $J^{cusp}_{k,l^2}$ (here $U_l(\phi) = \phi(\tau,l z)$) is of the same order as that of the space $J^{cusp}_{k,1}$(as per \conjectureRef{jacobi-conj}), see section~\ref{old-strange1}. The contribution of another index-old part, viz. $V_p(J^{cusp}_{k,1})$ ($p $ prime) inside $\jkp$ is shown to be at most $O(k^2p)$, which is better as compared to what one gets from \thmref{mainthm2}. We also show that the contribution of the index $p$-new space in $\jkp$ is at least as big as the conjectured bound \eqref{jacobi-conj} for the full space. These are summarized in \propref{old-prop} and \propref{new-prop}. For the treatment of newspace, we depend on an asymptotic result on the average of central $L$-values via Waldspurger/Baruch-Mao's (\cite{waldspurger1980correspondance}, \cite{waldspurger1981coefficients} and \cite{bar-mao}) formula. That the $k$ aspect size is the same for all the above spaces is expected, as all of them grow linearly in $k$, it is the index aspect that needs attention.

Namely, in section~\ref{l1/2} we prove an asymptotic formula for the first moment of the family of central $L$-values $L(1/2, f \otimes \chi_D)/L(1, \mrm{sym}^2 f)$ as $f$ varies in the space of newforms of level $p$ and weight $2k-2$, such that $k,p \to \infty$ with $D$ growing at most as polynomially with $k,p$. For the full level and fixed twist $\chi_D$, this was worked out in \cite[Lemma~3]{blo}. The level aspect is more delicate. Of course, this result should be of independent interest as well. 

For $f \in S^{new}_{2k-2}(p)$, put $A_p:=1+\frac{p}{p+1}(1+(-1)^{k-1}\sgn D)C(p)$ and $B_p:= 2+2C(p)$ where $C(p)$ is given as:
\begin{align} \label{cp}
C(p):= \frac{-1}{p+1}\sumn_{t\ge 0} \frac{p^t}{(p+1)^{2t}}\sumn_{d|p^t}c_{p^t}(d)^2 .
\end{align}
Here $c_\ell (d)$ are the Tchebyshev coefficients (see \cite{petrow2019generalized}) which occur in the expression of $\lambda_f(p^n)$ as a polynomial in $\lambda_f(p)$. Using the estimates for Tchebyshev's coefficients from \cite[Corollary 2]{petrow2019generalized} with $Y=1/2$ (loc. cit.), it is easy to see that $C(p)\ll 1/p$. Our main result is given below.
\begin{thm} \label{blo-p}
Let  $B_{2k-2}^*(p)$ denote the set of newforms for the space $S_{2k-2}^{new}(p)$. Then for any fundamental discriminant $D$, we have
\begin{enumerate}
    \item when $(p,D)=1$,
    \begin{equation} \label{blo1}
    \sumn_{f\in B_{2k-2} ^*(p)} \frac{L(1/2, f\otimes \chi_D)}{ L(1, \mrm{sym}^2 f) } = \frac{A_p(2k-2) p}{ 2 \pi^2} + O_\epsilon(D^{7/8+\epsilon} k^{19/24+\epsilon} p^{9/16+\epsilon}); 
\end{equation}
\item when $p|D$ and $k$ and $D$ be such that $(-1)^{k-1} \sgn(D)=1$, then 
\begin{equation} \label{blo2}
    \sumn_{f\in B_{2k-2} ^*(p)} \frac{L(1/2, f\otimes \chi_D)}{ L(1, \mrm{sym}^2 f) } = \frac{B_p(2k-2) p}{2 \pi^2} + O_\epsilon(D^{7/8+\epsilon} k^{-1/12} p^{-1/4+\epsilon}).
\end{equation}
\end{enumerate}
\end{thm}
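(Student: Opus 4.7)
The plan is to extend the approach of \cite[Lemma~3]{blo} (proved there in the full level case) to the newform setting on $\Gamma_0(p)$, carrying through uniformity in $k$, $p$, and $D$ simultaneously. I would first apply an approximate functional equation to $L(1/2, f\otimes\chi_D)$: the analytic conductor is $\asymp pD^2 k^2$ in case~(1) and drops to $\asymp D^2 k^2$ in case~(2) (since $p\mid D$ kills the ramification at $p$), yielding Dirichlet-sum lengths $X\asymp k\sqrt{p}\,|D|$ and $X\asymp k|D|$ respectively. Swapping the $n$- and $f$-sums reduces everything to evaluating the newform harmonic average
\[
H(n) \;=\; \sum_{f\in B^*_{2k-2}(p)}\frac{\lambda_f(n)}{L(1,\mrm{sym}^2 f)}.
\]
The standard identity relating $L(1,\mrm{sym}^2 f)$ to $\|f\|^2$ on $\Gamma_0(p)$ supplies the prefactor $\tfrac{(2k-2)p}{2\pi^2}$ in the main term and converts $H(n)$ into a Petersson-type harmonic sum over newforms.

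Next I would invoke the Petersson formula for newforms on $\Gamma_0(p)$ (Iwaniec--Luo--Sarnak, refined by Petrow--Young), expressing the newform average as the full-level Petersson sum on $B_{2k-2}(p)$ minus the oldform contribution lifted from $B_{2k-2}(1)$ via the two standard oldform embeddings. The explicit dependence of this subtraction on $\lambda_g(p^t)$ for $g\in B_{2k-2}(1)$, when expanded as Hecke polynomials in $\lambda_g(p)$, produces precisely the squared Tchebyshev coefficients $c_{p^t}(d)^2$ weighted by $p^t/(p+1)^{2t}$ — the very sum defining $C(p)$ in \eqref{cp}. Matching the $n=1$ diagonal of the first half of the approximate functional equation with its dual (which carries the root-number parity $(-1)^{k-1}\sgn D$) then yields the constant $A_p$ of case~(1). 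In case~(2), the newform relation $\lambda_f(p)=\pm p^{-1/2}$ together with the sign condition $(-1)^{k-1}\sgn D = 1$ collapses the Tchebyshev expansion to the cleaner $B_p = 2+2C(p)$ and guarantees a non-degenerate functional equation so that the main term survives.

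The principal obstacle is the off-diagonal error, which after these reductions takes the shape
\[
\mathcal{E}(k,p,D) \;=\; \frac{2\pi i^{-k}}{p}\sum_{c\ge 1}\frac{1}{c}\sum_n\frac{\chi_D(n)}{\sqrt n}\,V\!\left(\tfrac{n}{X}\right)S(1,n;cp)\,J_{2k-3}\!\left(\tfrac{4\pi\sqrt n}{cp}\right),
\]
which must be bounded uniformly in all three parameters. I would split the $c$-sum at the Bessel transition $c_0\asymp \sqrt{X}/(kp)$: for $c<c_0$ the power-series bound $J_\nu(x)\ll (ex/2\nu)^\nu$ yields sharp decay in $k$, while for $c\ge c_0$ I open the Kloosterman sum additively and apply Poisson summation to the $n$-sum modulo $cp|D|$, turning the problem into controlling incomplete character sums against $\chi_D$ against a dual Bessel transform. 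Combining Weil's bound on $S(1,n;cp)$ with a Weyl-type analysis of the resulting character sum in the $D$ aspect produces the $D^{7/8+\epsilon}$ factor, while a careful balance between the small-$c$, transition, and large-$c$ regimes against the modulus $cp$ gives the level exponents $p^{9/16+\epsilon}$ in case~(1) and $p^{-1/4+\epsilon}$ in case~(2); the improvement of the $k$-exponent in case~(2) from $k^{19/24+\epsilon}$ to $k^{-1/12}$ reflects the shorter AFE length. The hardest technical point is maintaining simultaneous uniformity in $k,p,D$: each parameter governs a distinct scale in the Kloosterman--Bessel--character-sum interplay, and a single loss anywhere would push the error above the main term within the polynomial range of $D$ in $kp$ that is needed for the application to the sup-norm of Saito--Kurokawa lifts.
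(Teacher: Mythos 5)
Your high-level skeleton is the same as the paper's: approximate functional equation, swap of sums, the Petersson formula for newforms of level $p$ via the Iwaniec--Luo--Sarnak/Petrow--Young identity, and the normalization from $L(1,\mathrm{sym}^2 f)$ to $\langle f,f\rangle$ producing the prefactor $(2k-2)p/(2\pi^2)$. You are also right that the Tchebyshev coefficients of Petrow--Young are responsible for $C(p)$, and that in case~(2) the condition $\chi_D(p)=0$ together with the sign condition is what makes the main term survive. To that extent the route is correct.

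However, the proposed handling of the off-diagonal is both different from the paper and not actually carried out; several of your assertions about it do not hold up.
\begin{itemize}
\item The split at $c_0\asymp\sqrt{X}/(kp)$ is vacuous for this problem: with $X\asymp k\sqrt{p}\,|D|$ and $D$ polynomially bounded in $k,p$ (the regime of interest), one has $c_0\ll 1$, so every $c\ge 1$ lies in the Bessel-decay regime and the power-series bound applies throughout. There is no ``transition'' to balance against, and no need to open the Kloosterman sum or Poisson-sum against $\chi_D$. The paper sidesteps this entirely by quoting the asymptotic $\Delta_N(m,n)=\delta(m,n)+O\big((mn)^{3/8+\epsilon}k^{-13/12}\big)$ (and the more refined version for $\Delta_p(1,n)$) from ILS Corollary~2.2, and then bounding $\chi_D(n)$ trivially.
\item Your claim that a ``Weyl-type analysis of the resulting character sum in the $D$ aspect produces the $D^{7/8+\epsilon}$ factor'' is internally inconsistent. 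The exponent $D^{7/8+\epsilon}$ in the theorem is exactly what one gets from the \emph{trivial} bound $|\chi_D(n)|\le 1$ together with $\sum_{n\ll\mathfrak q^{1/2+\epsilon}} n^{-1/8+\epsilon}\ll\mathfrak q^{7/16+\epsilon}$, where $\mathfrak q=k^2D^2 p$ (resp.\ $k^2D^2$); a genuine cancellation estimate for the character sum would be expected to \emph{beat} $D^{7/8}$, not reproduce it. The specific exponents $(7/8,19/24,9/16)$ and $(7/8,-1/12,-1/4)$ are the output of the ILS error factors combined with the AFE truncation length after multiplication by the $\langle f,f\rangle$-to-$L(1,\mathrm{sym}^2 f)$ conversion $\asymp kp$; your sketch asserts that the Poisson route reproduces them without computing anything, which cannot be taken on faith.
\end{itemize}

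A second, independent gap is in the main term: writing down the Petrow--Young expansion is not enough. After substituting $\Delta_1(d_1, nd_2/e^2b^2)=\delta(\cdot)+O(\cdot)$, the $n$-sum restricted to powers of $p$ telescopes in a delicate way, and it is this telescoping (together with the vanishing $c_{p^t}(p^i)=0$ for $t\not\equiv i\pmod 2$) that produces the clean constants $\tfrac{p}{p+1}C(p)$ and $C(p)$, and hence $A_p$ and $B_p$. You gesture at ``matching the $n=1$ diagonal'' but do not address this step, so the identification of the main-term constants is not actually established in your argument. Closing these two gaps would require either reproducing the ILS $\Delta_N$ asymptotic from your Poisson analysis and then doing the telescoping computation, or simply following the paper and citing ILS directly.
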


When $(p,D)=1$, the above asymptotic is valid for $D\ll k^{10/21-\epsilon} p^{1/2-\epsilon}$ and when $p|D$, it is valid for $D\ll k^{26/21-\epsilon} p^{10/7-\epsilon}$.
Let us note that \thmref{blo-p} does not follow from \cite{Y} or \cite{petrow2019generalized}, partly because none is hybrid with respect to all the parameters in play: weight, level and the twist.

We must mention here about two more parts which play crucial role while discussing the size of the newspace: that of subsections~\ref{victory} and ~\ref{barmao}. Namely, in subsection~\ref{victory} we show how the choice of $D=-4p$ leads to the lower bound $k^2 p^{1/2}$ (which is the size of $\jkp$, if we believe in \conjectureRef{jacobi-conj}) for the size of the newspace. But for this we have to appeal to the classical newform theory for Jacobi forms, e.g., as considered in \cite{man-ram}. However, we must first reconcile the "two" notions of newspaces presented in loc. cit. and then derive the requisite Waldspurger type formula in the context of Jacobi forms when $p|D$ using Baruch-Mao's result (\cite{bar-mao}). Along the way, we write down the main results about Eichler-Zagier correspondence in a way which we believe would be useful in the future. This is the content of subsection~\ref{barmao}.

The `second' method is presented in section~\ref{method-2}. It does not use the geometric side of the Bergman kernel for Jacobi forms of index bigger than $1$, but uses full knowledge of Jacobi Poincar\'e series of index $1$. One of our motivations behind exploring more than one method is that one of them could have the potential to implement (some variant) the amplification method to improve the individual sup-norm bound for an SK lift. It is not immediately clear how this might work out. This is partly explained by the fact that the standard definition of the SK lifts is via their Fourier expansion—from which the geometric side of the Bergman kernel is hard to study, see section~\ref{bkgeo}.
This led us to make the remark above that the third method, with an improvement in sub-convexity results could be one of the best avenues in this regard.

In the last section, we investigate the size of `pullbacks' (to the diagonal $\h \times \h$) of an SMF. When the SMF is an SK lift, we have a deep result of Ichino expressing its support in $S_k \otimes S_k$ in terms of central $L$-values. In fact, this was the basis of the works \cite{liu2014growth} and \cite{BKY}, where the $L^2$-mass of such pullbacks were investigated. We are interested in the $L^\infty$-mass of the space consisting of the pullbacks (in the spirit of this paper) and show that the size of such pullbacks is either $k^3$ or $k^2$ depending on how one normalises the $L^2$-norm; it is $k^3$ if we sum over all the pullbacks (to the diagonal, see 1st paragraph of the introduction) $F^\circ$ of $F$ with $\norm{F}=1$, and it is $k^2$ if we sum the same with $\norm{F^\circ}=1$, provided it is non-zero. This suggests that the bound $\norm{F^\circ}_\infty \ll k^{1/2}$ might be expected for an $F \in \skk$, at least when $F$ is a Hecke eigenform. Also $\norm{F } :=\norm{F}_2$ throughout this paper.
\begin{conjecture}
Let $F \in \skk$ be a Hecke eigenform such that $F^\circ \ne 0$ and $\norm{F^\circ}_2=1$. Then $\norm{F^\circ}_\infty \ll k^{1/2 + o(1)}$.
\end{conjecture}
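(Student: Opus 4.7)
The plan is to combine the structural content of Ichino's pullback formula for Saito--Kurokawa lifts with the (conjectural) sup-norm bound for $S_k$. First I would use that $F^\circ$, being the pullback of an $\Sp{4}{\z}$-invariant function along $\SL{2}{\z}\times\SL{2}{\z}\hookrightarrow \Sp{4}{\z}$, transforms as a holomorphic cusp form of weight $k$ in each variable, and hence lies in $S_k \otimes S_k$. By Ichino's theorem (the ``deep result'' cited in the introduction), when $F$ is a Hecke-eigen SK lift the expansion of $F^\circ$ in an $L^2$-normalized Hecke eigenbasis $B_k$ of $S_k$ is supported on the diagonal:
\[
F^\circ(\tau, \tau') = \summ_{g \in B_k} c_g\, g(\tau)\, g(\tau'),
\]
with each $|c_g|^2$ expressible in terms of central values of $L$-functions attached to $g$ and the Shimura correspondent $f$ of $F$. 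Since $\{g \otimes g : g \in B_k\}$ is orthonormal in $S_k\otimes S_k$, the hypothesis $\norm{F^\circ}_2=1$ reads $\summ_{g \in B_k} |c_g|^2 = 1$.

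Next I would apply Cauchy--Schwarz with the all-ones vector, using $\dim S_k \asymp k$, to obtain $\summ_{g \in B_k} |c_g| \ll k^{1/2}$. Feeding this into the triangle inequality
\[
(yy')^{k/2}\, |F^\circ(\tau, \tau')| \;\leq\; \summ_{g \in B_k} |c_g|\, \bigl(y^{k/2}|g(\tau)|\bigr)\bigl((y')^{k/2}|g(\tau')|\bigr),
\]
and invoking the sup-norm conjecture for $S_k$ in the weight aspect---namely $y^{k/2}|g(\tau)| \ll_\epsilon k^\epsilon$ uniformly in $g \in B_k$ and $\tau \in \h$---I would conclude $\norm{F^\circ}_\infty \ll_\epsilon k^{1/2 + 2\epsilon}$, matching the conjectured $k^{1/2+o(1)}$.

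The main obstacle is precisely this last invocation. The sup-norm conjecture for $S_k$ in the weight aspect is itself famously open: the best unconditional bound is $\norm{g}_\infty \ll k^{1/4}$ (with small power savings by amplification), which plugged into the above only recovers $\norm{F^\circ}_\infty \ll k^{1+\epsilon}$---exactly the bound one obtains directly from Cauchy--Schwarz against $\mbb{B}_k(\tau)^{1/2}\mbb{B}_k(\tau')^{1/2}$ where $\mbb{B}_k$ is the Bergman kernel of $S_k$, and a full factor of $k^{1/2}$ away from the conjecture. An unconditional proof would therefore have to either make genuine progress on this classical problem, or bypass pointwise control of each $g$ by exploiting sign cancellation among the terms $c_g\, g(\tau) g(\tau')$---for instance via an amplification scheme tailored to $F^\circ$ using the residual Hecke structure inherited from $\skk$, or via a Kuznetsov-style spectral treatment of the diagonal sum. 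Both routes appear to lie substantially beyond current technology.
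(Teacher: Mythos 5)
This is stated as a \emph{conjecture} in the paper; the paper offers no proof, only a heuristic motivation. The motivation comes immediately after the proposition establishing $\sup(\skk^\circ)\asymp k^2$: since the Bergman kernel of $\skk^\circ$ is a sum of $\dim\skk^\circ\asymp k$ non-negative terms, a ``random'' term should have size $k^2/k=k$, which translates into $\norm{F^\circ}_\infty\ll k^{1/2+o(1)}$ for a typical $L^2$-normalized $F^\circ$. The unconditional bound the paper actually records is Corollary~\ref{skocor}, namely $\norm{F^\circ_f}_\infty\ll k^{1+\epsilon}$, obtained simply by dominating a single term of the Bergman kernel by its supremum; this agrees with your unconditional conclusion.

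Your conditional argument, however, has a substantive error: the premise $y^{k/2}|g(\tau)|\ll_\epsilon k^\epsilon$ is not the sup-norm conjecture for $S_k$ in the weight aspect — it is a \emph{false} statement. For $L^2$-normalized Hecke eigenforms $g\in S_k$ on $\SL{2}{\z}$, Xia's theorem (cited in the paper as \cite{xia}) gives \emph{both} $\norm{g}_\infty\ll_\epsilon k^{1/4+\epsilon}$ \emph{and} $\norm{g}_\infty\gg_\epsilon k^{1/4-\epsilon}$; the lower bound is forced by the behaviour of $g(iy)$ near $y\approx k/4\pi$, and no amplification or conjectural improvement can remove it. Indeed the paper's own \eqref{supnormconj} at $n=1$ asserts exactly the exponent $1/4$. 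Plugging the true exponent $1/4$ into your chain
\[
(yy')^{k/2}|F^\circ(\tau,\tau')|\;\le\;\Bigl(\sup_g\norm{g}_\infty\Bigr)^2\sum_{g\in B_k}|c_g|\;\ll\;k^{1/2+\epsilon}\cdot k^{1/2}\;=\;k^{1+\epsilon}
\]
recovers only Corollary~\ref{skocor}, and this is the best your route can possibly give: the triangle inequality with the Cauchy--Schwarz bound $\sum|c_g|\ll k^{1/2}$ is tight when the $|c_g|$ are of uniform size and the $\norm{g}_\infty$ are uniformly $\asymp k^{1/4}$, which is exactly the expected generic situation. The conjectured gain of $k^{1/2}$ must therefore come from cancellation among the oscillating terms $c_g\,g(\tau)g(\tau')$, which the positive-coefficient triangle inequality discards entirely. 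Your final paragraph correctly identifies this need for cancellation; the point to internalize is that the intermediate ``conditional proof'' is vacuous rather than merely presently unreachable — the hypothesis it conditions on contradicts a known theorem.
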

A curious corollary of our calculations is about the linear independence of the square-root of the central $L$-values $L(1/2, \mrm{sym}^2g \times f)$ as $g,f$ vary in $S_k,S_{2k-2}$ respectively, see \corref{maxrank}.

Moreover, in an Appendix to section~\ref{pullback-sec} (see section~\ref{appendix1}), we determine the size of pullbacks of the full space $S^2_k$ and find the answer to be $k^3$ -- which says that the pullbacks fill up all the mass in $S_k \otimes S_k$ (cf. \thmref{witt-thm}). We point out that \textit{the main idea here is the realization of space of pullbacks of SK lifts (or even for the full space $S^2_k$) as a certain simple subspace of $S_k \otimes S_k$, which allows us to find their sizes}; and also allows one to talk about the `preliminary' bounds for the size of these pullbacks. Merely using the Fourier expansions of the pullbacks (which involves sums of square-roots of central $L$-values or averages of Fourier coefficients of half-integral weight cusp forms, for which we do not have a Petersson formula or some other substitute) gives worse results, see \lemref{SK0tensor} in this regard.

We would like to remind the reader that the average results in this paper have their own value: note that merely squaring and adding the best unconditional sup-norm bounds do not usually give the expected average result. The goal of this paper is not to provide individual sup-norm bounds, but to study the BK. Individual sup-norm bounds will be taken up separately.

We end the introduction by mentioning several questions and remarks which can be pursued for further research on the topics considered in this paper.

\begin{inparaenum}[(i)]
\item
It is not clear to us how to improve the index aspect of \thmref{mainthm2}. One might use the fact that $J_{k,m} \cong V_{k,m} \otimes Th_m$, where $V_{k,m}$ is the space of certain vector-valued $\SL{2}{\z}$ modular forms, and $Th_m$ is the (space spanned by the) congruent theta tuple mod $m$. But we can't perceive this to be radically different from what has been done in this paper.

\item
\conjectureRef{jacobi-conj} perhaps could be proved for $n=2$ with efforts, by using the available results for SMF of degree $2$ (cf. \cite{sd-hk}). For higher degrees, one might get some result, but probably even the $k$ aspect will be weak.

\item
As the reader might have noticed, the `trivial' (but not easy) bound for an SK lift of a Hecke eigenform, stands bounded above by $k^{5/4}$. One would hope to improve this bound -- the best possible bound in this direction is $k^{3/4}$ (see the introduction in \cite{sd-hk}). Perhaps the `theta-lifting' perspective plays a role. But this will be specific to degree $2$. The same comment goes for pullbacks of Hecke eigenforms (cf. \corref{skocor}) where the bound which we get is $k ^{1+\epsilon}$ and the expected bound is $k^{1/2}$.

\item
One might be able to remove the presence of $\epsilon$ in the statement of \thmref{mainthm1}.

\item
Perhaps successful results on higher moments of twisted central $L$-values will also help improve our bounds, see section~\ref{subconvex}. One might try to use an asymptotic formula or a good upper bound for the higher moments in the setting of \thmref{blo-p}. See for instance \cite[(3.9)]{blo-har} for the second moment, in the level and twist aspects. We have already discussed \cite{Y} to this effect.

\item
It is desirable to understand the contributions of the index-old and index-new forms (inside $J_{k,m}$) in a more refined and accurate way -- also for higher degrees and matrix indices.

\item
It is natural to consider the level aspect version of the results in this paper. These are under consideration by the authors.
\end{inparaenum}

\subsection*{Acknowledgements} 
The authors thank the referee for comments on the article which helped in improving the presentation.

S.D. thanks IISc. Bangalore, UGC Centre for Advanced Studies, DST India and the Alexander von Humboldt Foundation for financial support. Parts of the paper were written while S.D. was enjoying the hospitality at the University of Bonn.

P.A. was a Postdoctoral Fellow at the Harish--Chandra Research Institute (HRI), Prayagraj, India during the preparation of the article and would like to thank HRI for funding and providing excellent facilities.
%\subsection*{Conflict of Interest statement.}
%The authors have no relevant financial or non-financial interests to disclose.
%\subsection*{Data availability statement.}
%Data sharing not applicable to this article as no datasets were generated or analysed during the current study.
%\raggedbottom
\section{Notation and setting} \label{prelim}

In this paper, we will mostly use standard notation, some of which are collected below, and the rest will be introduced as and when it is necessary. For standard facts about Siegel modular forms and Jacobi forms, we refer the reader to \cite{Fr}, \cite{klingen1990introductory}, \cite{ez}, \cite{ziegler1989jacobi}.

\begin{inparaenum}[(1)]
\item We use the standard conventions in analytic number theory. We note $A \ll B$ and $A=O(B)$ are the Vinogradov and Landau notations, respectively. By, $A\asymp B$ we mean that there exists a constant $c\ge 1$ such that $B/c\le A\le cB$. Any subscripts under them (e.g., $A\ll_n B$) indicates the dependence of any implicit constants on those parameters. Throughout, $\epsilon$ will denote a small positive number, which can vary from one line to another.

\item 
Let $\z, \Q, \mbb R$, and $\complex$ denote the integers,
rationals, reals and complex numbers respectively. For a commutative ring $R$ with unit, $M_{n,m}(R)$ and $M_n( R)$ denote the set of $n \times m$  and $n\times n$ matrices over $R$, respectively. $\mrm{GL}_n (R)$ denotes the group of invertible elements in $M_n(R)$. $\mrm{Sym}_n(R)$ denotes the set of all $n\times n$ symmetric matrices over $R$. $\mrm {Sp}_n(R)$ denotes the symplectic group of degree $n$ over $R$. $\Lambda_n$ (resp. $\Lambda_n^+$) denotes the set of all $n\times n$ symmetric, positive semi--definite (resp. positive--definite), half--integral matrices  (that is $T=(t_{ij})$ with  $ 2t_{ij},\;t_{ii}\in \z$). We will denote the transpose of $A$ by $\tp{A}$. For matrices $A$ and $B$ of appropriate size, we write $A[B]:=BAB^t$. Moreover, $1_n$ and $0_n$ will be the identity and zero matrices, respectively.

\item For $A=\smat{a}{b}{c}{d}\in \GSp{n}{\R}$ (the group of symplectic similitudes with positive similitude factor), we denote by $A^\uparrow$ to be the image of $A$ under the diagonal embedding of $\GSp{n}{\R} \hookrightarrow \GSp{n+m}{\R}$ given by $A\mapsto \smat{\mat{a}{0}{0}{\det(A)}}{\mat{b}{0}{0}{0_m}}{\mat{c}{0}{0}{0_m}}{\mat{d}{0}{0}{1_m}}$.

\item Let $\hn:=\{Z\in M_n(\mathbb C)\, :\, Z=Z^t, \im Z>0\}$ be the Siegel's upper half plane of degree $n$. The symplectic group $\Sp{n}{\mbb R}$ acts on $\hn$ by $M\lan Z\ran =(AZ+B)(CZ+D)^{-1}$, for $M=\smat{A}{B}{C}{D}\in \Sp{n}{\mbb R}$ and $Z\in \hn.$ Let $\mc F_n$ denote the Siegel's fundamental domain for the action of $\Sp{n}{\z}$ on $\hn$.
\end{inparaenum}
\subsection{Jacobi forms of type \texorpdfstring{$(n,g)$}{(n,g)}} Let $R$ be $\mbb Z$, $\mbb Q$ or $\mbb R$, then the Heisenberg group $H_R^{n,g}$ is given by
\begin{equation}
    H_R^{n,g}=\{[(\lambda,\mu),\kappa]\,:\, \lambda,\mu\in M_{g,n}(R), \kappa\in M_g(R), \kappa+\mu\lambda^t\in\mrm{Sym}_g(R)\}
\end{equation}
with the composition law
\begin{equation}
    [(\lambda_1,\mu_1),\kappa_1] \cdot [(\lambda_2,\mu_2),\kappa_2]:=[(\lambda_1+\lambda_2, \mu_1+\mu_2), \kappa_1+\kappa_2+\lambda_1\mu_2^t-\mu_1\lambda_2^t].
\end{equation}
The symplectic group $ \Sp{n}{ R}$ acts on $H_R^{n,g}$ from right by
\begin{equation}
    [(\lambda,\mu),\kappa]M:= [(\lambda,\mu)M,\kappa]; \q M\in \mrm{Sp}_n(R),\, [(\lambda,\mu),\kappa]\in H_R^{n,g}. 
\end{equation}
The Jacobi group of type $(n,g)$ over $R$ is defined as
\begin{equation}
    G_R^{n,g}:=\Sp{n}{R} \ltimes H_R^{n,g}
\end{equation}
with the group law given by
\begin{equation}
    (M_1,X_1)(M_2,X_2):=(M_1M_2, (X_1M_2)\cdot X_2), \text{ for } M_1,M_2\in \Sp{n}{R} \text{ and } X_1,X_2\in H_R^{n,g}.
\end{equation}
The Jacobi group $G_R^{n,g}$ acts on $\hn\times M_{g,n}(\mbb C)$ by
\begin{equation}
    (M,[(\lambda,\mu),\kappa])\cdot (\tau,z):=(M\lan \tau\ran,(z+\lambda\tau+\mu)(c\tau+d)^{-1}),
\end{equation}
where $M=\smat{a}{b}{c}{d}$ with $a,b,c,d\in M_n(R)$ and $M\lan \tau\ran=(a\tau+b)(c\tau+d)^{-1}$.

For any $k\in\z$ and $S\in \Lambda_g^+$, $G_{\mbb R}^{n,g}$ acts on functions on $\hn\times M_{g,n}(\mbb C)$ as follows:
\begin{equation}\label{Jacobiaction}
    (\phi|_{k,S} \gamma)(\tau,z):=J_{k,S}(\gamma, (\tau,z))\phi(\gamma(\tau,z)),
\end{equation}
where $\gamma=(M,[(\lambda,\mu),\kappa])$ and the automorphy factor $J_{k,S}$ is given by
\begin{equation}
    J_{k,S}(\gamma, (\tau,z)):=\frac{ e(\tr (S(-(z+\lambda\tau+\mu)(c\tau+d)^{-1}c(z+\lambda\tau+\mu)^t+ \tau[\lambda]+2\lambda z^t+\kappa+\mu\lambda^t))}{\det(c\tau+d)^{k}}.
\end{equation}
The Jacobi group of type $(n,g)$ is given by
\begin{equation}
    \Gamma^J_{n,g}:=\{(M,X)\in G_{\mbb Z}^{n,g}\, : \, M\in \mrm{Sp}_n(\mbb Z)\}.
\end{equation}
A holomorphic function $\phi$ on $\hn\times M_{g,n}(\mbb C)$ is called a Jacobi form of weight $k$ and index $S$ if
\begin{enumerate}
    \item $\phi|_{k,S}\gamma = \phi \q \forall \gamma\in \Gamma^J_{n,g}$.
    \item $\phi$ is bounded at the cusps of $\mrm{Sp}_n(\mbb Z)\backslash \hn$ (automatic for $n\ge 2$, due to the K\"{o}cher principle).
\end{enumerate}
Any such $\phi$ as a Fourier expansion given by
\begin{equation}
    \phi(\tau,z)=\underset{4T-S^{-1}[R]\ge 0}{\sumn_{T\in \Lambda_n}\sumn_{R\in M_{n,g}(\z)}}C_\phi(T,R) e(\tr (T\tau+Rz)).
\end{equation}
If the Fourier coefficients survive only for $4T-S^{-1}[R]>0$, then $\phi$ is called a Jacobi cusp form. $J_{k, S; (n,g)}$ (resp. $J^{cusp}_{k, S; (n,g)}$) denotes the space of holomorphic Jacobi forms (resp. Jacobi cusp forms) of weight $k$ and index $S$ on $\hn \times M_{g,n}(\mbb C)$.

The  space of Jacobi cusp forms, $J^{cusp}_{k, S; (n,g)}$ is equipped with an inner product and is given by
\begin{equation}
    \lan \phi,\psi\ran :=\int_{\Gamma^J_{n,g}\backslash \hn \times M_{g,n}(\mbb C)} \phi(\tau,z)\overline{\psi(\tau,z)} (\det v)^{k} \exp(-2\pi \tr (Sv^{-1}[y])) \frac{du\,dv\,dx\,dy}{(\det(v))^{n+g+1}},
\end{equation}
for $\phi,\psi\in J^{cusp}_{k, S; (n,g)}$ and $\tau=u+iv$, $z=x+iy$.

At various places we need the following operators $V_m$ defined on Jacobi forms of degree $1$ and index $1$ (even though they can be defined for any index). For $\phi \in \jk$ define (cf. \cite[\S~4 (2)]{ez})
\begin{align} \label{vmdef}
    V_m(\phi)(\tau,z)= m^{k-1} \sumn_{\gamma } (c \tau+d)^{-k} e(m c z^2/(c \tau+d)^2) \phi \big(\frac{a \tau +b}{c \tau+d}, \frac{m z}{c \tau+d} \big),
\end{align}
where $\gamma =\smat{a}{b}{c}{d} $ runs over a set of representatives $\Gamma_1 \backslash M_{2,m}(\z)$. Here $M_{2,m}(\z)$ denotes the set of size $2$ integral matrices with determinant $m$. Then $V_m $ maps $\jk$ to $\jkm$.
By choosing upper triangular representatives, one arrives at the following Fourier expansion of $V_m(\phi)$ (cf. \cite[Theorem~~4.2 (7)]{ez}):
\begin{align} \label{vmfe}
     V_m(\phi)(\tau,z)= \sumn_{n,r} \big( \sumn_{a |(n,r,m)} a^{k-1} c_{\phi} (nm/a^2, r/a) \big) \, e(n \tau+rz).
\end{align}
\subsection{SK lifts}
We now recall the formula for the Fourier coefficients of $F \in \skk$ in terms of those of the lifted $\phi \in \jk$. Throughout the remainder of this paper we shall put 
\begin{equation}
    F(Z) = \sumn_{m \ge 1} \phi_{m,F}(\tau,z) e(m \tau') , 
\end{equation}
where we write $Z=\smat{\tau}{z}{z^t}{\tau'}$ and $\phi_{m,F} = V_m(\phi_{1,F})$ with $V_m$ as defined in \eqref{vmdef}. By the results of \cite{maass1979spezialschar}, \cite{ez}, when $k$ is even $F \in \skk$ and this correspondence $\phi \mapsto F$ is a Hecke equivariant isomorphism from $\jk \to \skk$. $F$ is called the SK lift of $\phi$.
For any $\mc D \equiv 0,-1 \bmod{4}$, $\mc D>0$, $c_{\phi,F}(\mc D) := c_{\phi_{1,F}}(\mc D)$. Then for $T = \smat{n}{r/2}{r/2}{m}$ and $\mf c(T) = (n,r,m)$ -- the content of $T$, the following relation holds:
\begin{align} \label{fcreln}
    a_F(T) =  \sumn_{a|\mf c(T)} a^{k-1}  c_{\phi,F}\left( D /a^2 \right).
\end{align}
where $D= \det(2T)$.
We also often realize the SK lift to be from $S_{2k-2}$ via the Shintani and Eichler-Zagier maps, all of which are Hecke equivariant isomorphisms. We thus parametrize $F \in \skk$ as $F=F_f$ for a unique $f \in S_{2k-2}$. In particular, $F_f$ is a Hecke eigenform if and only if $f$ is.

\subsection{Bergman Kernel for Jacobi forms}
 Let $\mathcal{B}_{k,S;(n,g)}$ be an orthonormal basis for $J_{k,S;(n,g)}^{cusp}$. Then the Bergman kernel (a reproducing kernel for the space of Jacobi cusp forms $J_{k,S,(n,g)}^{cusp}$) is given by,
\begin{equation}\label{naiveBK}
    B_{k,S;(n,g)}(\tau,z;\tau_0,z_0)=\sumn_{\phi\in\mathcal{B}_{k,S;(n,g)}}\phi(\tau,z)\overline{\phi(\tau_0,z_0}).
\end{equation}
It can also be written explicitly as given below. Let $h_{k,S,(n,g)}$ be a function on $(\mathbb{H}_n\times M_{g,n}(\mbb C))^2$ defined as (see e.g., \cite{zhou2010dimension}, \cite{sko-zag}):
\begin{equation}
    h_{k,S,(n,g)}(\tau,z;\tau_0,z_0):=\det(\tau-\bar{\tau}_0)^{-k}e\left(-(\tau-\bar{\tau}_0)^{-1}S[z-\overline{z}_0]\right).
\end{equation}
Let $k> g+2n$. For $(\tau,z),(\tau_0,z_0)\in(\hn\times M_{g,n}(\mbb C))^2$ the Bergman Kernel for the space of Jacobi forms can be written as (see \cite[Proposition 2]{zhou2010dimension})
\begin{equation} \label{bkdef}
    B_{k,S;(n,g)}(\tau,z;\tau_0,z_0):=\lambda_{k,S;(n,g)}\sumn_{\xi\in\Gamma^{J}_{(n,g)}}(h_{k,S;(n,g)}|^{(1)}_{k,S}\xi)(\tau,z;\tau_0,z_0),
\end{equation}
where $\lambda_{k,S;(n,g)}= 2^{-n(n+3)/2}\pi^{-n(n+1)/2} (\det(2S))^n \prod_{t=0}^{n-1}\prod_{j=1}^{n-t}(k-\frac{g+t}{2}-j)$ and $|^{(1)}_{k,S}$ denotes the action of the Jacobi group $\Gamma^J_{(n,g)}$ with respect to the first pair of variables $(\tau,z)$.

\subsection{Siegel-Jacobi Poincar{\'e} series. }
For a positive definite matrix $l\in \Lambda_n^+$ and $r\in M_{n,g}(\mathbb Z)$, the $(l,r)$-th Jacobi Poincar{\'e} series of weight $k$ and index $S$ is defined as (see \cite{bouganis2020algebraicity}, we replace $(\tau,z)$ with $(Z,W)$)
\begin{equation}\label{PSdef}
    P_{k,S}^{l,r}(Z, W):=\sumn_{\gamma\in \Gamma^J_{(n,g);\infty} \backslash \Gamma^J_{n,g}} e(\mrm{tr}(l Z)) e(\mrm{tr}(r^t W))|_{k,S}\gamma.
\end{equation}

One knows that $P_{k,S}^{l,r}\in J^{cusp}_{k, S; (n,g)}$ for $k> 2n+g$ and for any $\phi\in J^{cusp}_{k, S; (n,g)}$ (see \cite[Theorem 6.4]{bouganis2020algebraicity})
\begin{equation}\label{PetPS}
    \lan \phi, P_{k,S}^{l,r}\ran= \lambda_{k,S; (n,g)}(l,r) C_\phi(l,r),
\end{equation}
where $\Gamma_n(s):=\pi^{n(n-1)/4}\prod\limits_{i=0}^{n-1}\Gamma(s-\frac{i}{2})$ and 
\begin{equation} \label{lambda-def}
    \lambda_{k,S; (n,g)}(l,r):=\frac{ \Gamma_n(k-\frac{n+g+1}{2}) (4l-S^{-1}[r])^{-k+(n+g+1)/2} }{ (\pi)^{nk-n(n+g+1)/2} (\det 2S)^{n/2}}.
\end{equation}

Let $C(l_1,r_1;P_{k,S}^{l_2,r_2})$ denote the $(l_1,r_1)$-th Fourier coefficient of $P_{k,S}^{l_2,r_2}$. Then, using \eqref{PetPS}, we get
\begin{equation}\label{PetTr}
    \sumn_{\phi\in \mathcal{B}_{k,S;(n,g)}} C_\phi(l_1,r_1)\overline{C_\phi(l_2,r_2)}=  \lambda_{k,S;(n,g)}(l_2,r_2)^{-1} C(l_1,r_1;P_{k,S}^{l_2,r_2}),
\end{equation}
which is the Petersson trace formula for the space $J^{cusp}_{k, S; (n,g)}$.

\section{Lower bound for the Bergman kernel for the space of Jacobi forms}
When $n>1$, a possible approach to obtain the best possible lower bound for the Bergman kernel for $J^{cusp}_{k, S; (n,g)}$ is via an `asymptotic orthogonality' of Fourier coefficients of cusp forms -- see below. This avoids the use of explicit knowledge of the Fourier coefficients of Siegel-(Jacobi) Poincar\'e series; something which is known to be quite difficult to deal with (cf. \cite{sd-hk}). When $n=1$, of course we can handle them better (cf. \cite{boecherer1993estimates}).

\subsection{Asymptotic orthogonality of the Fourier coefficients of Siegel--Jacobi Poincar\'e series} \label{asymp-ortho}
In this section, we prove the following asymptotic orthogonality result (cf. \cite{kowalski2011note} for Siegel cusp forms). 
\begin{prop}\label{prop:asymporth}
Let $k$ be even. Then
\begin{align}\label{asymporth}
 \lim \nolimits_{k \to \infty}   C(l',r';P_{k,S}^{l,r}) = \delta_S(l,r; \, l',r'),
\end{align}
where $\delta_S(l,r; \, l',r')=\# \{ (A,\lambda)  \in \GL{n}{\z} \times \z^{g}  \colon \smat{l}{r/2}{r^t/2}{S}[\smat{A}{0}{\lambda}{I_g}] = \smat{l'}{r'^t/2}{r'/2}{S} \}$.
\end{prop}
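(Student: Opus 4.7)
The plan is to unfold the Poincar\'e series into its explicit Fourier expansion, isolate the ``parabolic'' cosets which give the finite count $\delta_S(l,r;\,l',r')$, and show that every remaining coset contributes a term that vanishes as $k\to\infty$; via absolute convergence (dominated convergence), this will yield \eqref{asymporth}.

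First I would stratify representatives of $\Gamma^J_{(n,g);\infty}\backslash \Gamma^J_{n,g}$ by the shape of the $\Sp{n}{\z}$-component $M=\smat{A}{B}{C}{D}$. The ``parabolic'' stratum $C=0$ descends, modulo the stabilizer, to a finite set of cosets indexed by pairs $(A,\lambda)\in \GL{n}{\z}\times \z^g$ (the Levi part together with the integral Heisenberg translations that act nontrivially on $(l,r)$). On this stratum the slash action $|_{k,S}$ transforms $e(\mrm{tr}(lZ)+\mrm{tr}(r^tW))$ by simply replacing the ``data matrix'' $\smat{l}{r/2}{r^t/2}{S}$ by its $\smat{A}{0}{\lambda}{I_g}$-conjugate. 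Reading off the $(l',r')$-th Fourier coefficient, each such coset contributes exactly $1$ precisely when $\smat{l}{r/2}{r^t/2}{S}[\smat{A}{0}{\lambda}{I_g}]=\smat{l'}{r'^t/2}{r'/2}{S}$; summing yields the main term $\delta_S(l,r;\,l',r')$, which is in particular independent of $k$.

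Second, for the ``non-parabolic'' stratum $C\neq 0$, each coset contributes a generalized Kloosterman-type sum multiplied by a matrix $J$-Bessel function of order $\sim k-(n+g+1)/2$ evaluated at a point depending on $(l,r,l',r',S,C)$. The classical bound $J_\nu(x)\ll x^\nu/\Gamma(\nu+1)$, together with its matrix-argument analogue, produces rapid termwise decay in $k$ for every fixed coset. This is precisely the ``strong term-wise decay'' alluded to in the introduction and codified by \lemref{det>1}. To exchange limit and summation, I would produce a $k$-uniform dominating series: combining Weil-type bounds on the Kloosterman sums with the Bessel decay outside a bounded region gives absolute convergence uniformly in $k\ge k_0$ for some $k_0$, so dominated convergence concludes the argument.

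The principal technical obstacle, and what distinguishes the proof from the Siegel case treated by Kowalski, is establishing this uniform dominating bound in the Jacobi setting: in addition to the $\Sp{n}{\z}$-summation, one must handle the additional summations over the Heisenberg translates $(\lambda,\mu,\kappa)$, and the delicate Gaussian-type damping produced by the $S$-dependent exponential in the automorphy factor $J_{k,S}$ has to be carefully exploited so that the bound is at once summable in $C$ and independent of $k$. Once this is in place, the parabolic stratum supplies exactly $\delta_S(l,r;\,l',r')$ and the non-parabolic contributions vanish in the limit, proving the proposition.
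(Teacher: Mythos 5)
Your proposal takes a genuinely different route from the paper's proof, and it also misreads one of the paper's lemmas. You propose to unfold $P_{k,S}^{l,r}$ into its explicit Fourier expansion --- generalized Kloosterman sums times matrix-argument Bessel functions --- and to kill the non-parabolic cosets via Bessel decay in $k$, with a $k$-uniform Weil/Bessel majorant to justify dominated convergence. This is essentially the route of \cite{kowalski2011note} transplanted to the Jacobi setting. The paper deliberately avoids that route: it represents $C(l',r';P_{k,S}^{l,r})$ as the integral \eqref{Clrint} over the fixed compact set $\mc C(y_0)$, and for each coset $\gamma=(M,(\lambda,0))$ with $c_M\neq 0$ bounds the summand $A_\gamma$ pointwise by $y_0^{-k}e^{2\pi g}$ using the purely geometric inequality $|\det(c_MZ+d_M)|\ge y_0>1$ of \lemref{det>1}. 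The $k$-independent majorant that enables dominated convergence is then simply the Poincar\'e-type series at exponent $2n+g+1$ (times the bounded $e^{2\pi g}$ factor), not a Kloosterman/Bessel estimate, and the surviving $c_M=0$ stratum integrates out to $\delta_S(l,r;l',r')$ by orthogonality of characters, using $k$ even to dispose of the $\det(u)^k$ sign. In particular, \lemref{det>1} is a geometric bound on the automorphy factor at a specific point --- it has nothing to do with Bessel-function decay, contrary to how your proposal invokes it.

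Two consequences. First, for general $n$ (and even more so with the extra Heisenberg sums present in the Jacobi case), the explicit Fourier expansion of Siegel--Jacobi Poincar\'e series is substantially harder to control than your sketch suggests: matrix Kloosterman sums and matrix-argument Bessel functions are delicate objects, and the paper's integral/pointwise approach is designed precisely to bypass them and to get the uniformity in $S$ that it needs. Second, your proposal flags its own crux --- the $k$-uniform dominating bound for the non-parabolic stratum --- but does not supply it; as written this is a genuine gap, and it is exactly what the paper's simpler argument circumvents. If you wished to complete the Kloosterman/Bessel route you would need to establish that uniform majorant in the Jacobi setting, which is a nontrivial undertaking in its own right.
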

So from \eqref{PetTr}, one can say that the Fourier coefficients are `asymptotically orthogonal' on average over an orthonormal basis as $k \to \infty$, if we also use \eqref{lambda-def}. This result has application towards the sup-norm problem and other equidistribution problems (cf. \cite{kowalski2011note}, \cite[Lemma~4.1]{sd-hk} and below).

For any $y_0>0$, let us define the set
\begin{align} \label{compset}
    \mc F (y_0) = \{ Z= U+iV \in \hn \colon  V=y_0I_n \}.
\end{align}
In \cite{kowalski2011note}, \eqref{asymporth} was established for $n \ge 1, g=0$ via a technical lemma which states that for $y_0$ large enough, for all $Z \in \mc F(y_0)$ and all $M=\smat{a}{b}{c}{d}\in \Sp{n}{\z}$ with $c\neq 0$, one has $|\det(cZ+d)| >1$
(cf. \cite[Lemma~5]{kowalski2011note}). Further, in \cite[Remark~7]{kowalski2011note} it was speculated that the above result should hold for all $y_0>1$ based on Gottschiling's (see \cite{gottschling1959explizite}) explicit description of the finitely many boundary components of $\mc F_2$. Such description is not available for $n>3$ (see \cite[\S~5]{carine} for a partial result when $n=3$), as far as we know.

We prove the following result, which is an improvement over \cite[Lemma~5]{kowalski2011note}, in that it holds for all $n \ge 1$, all $y_0>1$ and does not require the explicit description of the boundary components of $\mc F_n$.

We write $c_M, d_M,\ldots$ etc. for the lower blocks of $M \in \Sp{n}{\z}$.
\begin{lem}\label{det>1}
Let $y_0>1$ and $Z\in\mc F(y_0)$. Then, for any $M=\smat{a_M}{b_M}{c_M}{d_M}\in \Sp{n}{\z}$ with $c_M \neq 0$,
\begin{equation}
    |\det(c_MZ+d_M)| \ge y_0.
\end{equation}
\end{lem}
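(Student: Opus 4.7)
My plan is to square the inequality and reduce it to a Gram-determinant bound, then apply Cauchy--Binet to extract a single large summand. Writing $Z = U + iy_0 I_n$ with $U$ real symmetric and $W := c_M Z + d_M = (c_M U + d_M) + iy_0\, c_M$, I would expand
\begin{equation*}
W \bar W^t = (c_M U + d_M)(c_M U + d_M)^t + y_0^2\, c_M c_M^t + iy_0\bigl(c_M d_M^t - d_M c_M^t\bigr);
\end{equation*}
the cross-term vanishes by the symplectic relation $c_M d_M^t = d_M c_M^t$, yielding
\begin{equation*}
|\det(c_M Z + d_M)|^2 = \det(X X^t), \qquad X := \bigl(c_M U + d_M \,\big|\, y_0\, c_M\bigr) \in M_{n, 2n}(\mathbb R).
\end{equation*}

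Next, Cauchy--Binet gives $\det(X X^t) = \sum_{|I|=n} (\det X_I)^2 \geq (\det X_{I_0})^2$ for any single column subset $I_0 \subset \{1, \dots, 2n\}$, and I would select $I_0$ of ``mixed type.'' Let $r := \rk(c_M) \geq 1$ and pick column indices $j_1, \dots, j_r$ so that $c_{j_1}, \dots, c_{j_r}$ form a basis of the column space of $c_M$. The structural input is that $(c_M \mid d_M)$ has full row rank $n$, which follows from the other symplectic identity $a_M d_M^t - b_M c_M^t = I_n$ (it forces $\ker c_M^t \cap \ker d_M^t = 0$). Consequently, the columns of $d_M$ surject onto $\mathbb R^n / \mathrm{span}(c_{j_1}, \dots, c_{j_r})$, so one can find $i_1, \dots, i_{n-r}$ so that $d_{i_1}, \dots, d_{i_{n-r}}, c_{j_1}, \dots, c_{j_r}$ is a basis of $\mathbb R^n$.

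Take $I_0 := \{i_1, \dots, i_{n-r}\} \cup \{n + j_1, \dots, n + j_r\}$; then $X_{I_0}$ has first $n-r$ columns $(c_M U + d_M)_{i_k}$ and last $r$ columns $y_0\, c_{j_l}$. Since every column of $c_M U$ lies in the column space of $c_M$, subtracting from each of the first $n-r$ columns a suitable linear combination of the last $r$ reduces $X_{I_0}$ to the matrix $(d_{i_1}, \dots, d_{i_{n-r}}, y_0\, c_{j_1}, \dots, y_0\, c_{j_r})$ without altering $|\det X_{I_0}|$, so
\begin{equation*}
|\det X_{I_0}| \,=\, y_0^r\,\bigl|\det(d_{i_1}, \dots, d_{i_{n-r}}, c_{j_1}, \dots, c_{j_r})\bigr| \,\geq\, y_0^r,
\end{equation*}
the last bound since the determinant is a nonzero integer (nonzero by the choice of $I_0$, integer because $c_M, d_M \in M_n(\mathbb Z)$). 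This delivers the sharper inequality $|\det(c_M Z + d_M)| \geq y_0^{\rk(c_M)} \geq y_0$, which implies the claim. The only genuinely delicate step I foresee is the combinatorial extraction of an integral, non-vanishing ``mixed'' minor of $(c_M \mid d_M)$ of the prescribed form -- everything else is formal linear algebra, independent of any boundary description of $\mathcal F_n$.
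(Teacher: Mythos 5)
Your proposal is correct, and it takes a genuinely different route from the paper's. The paper invokes Siegel's lemma to put the coprime symmetric pair $(c_M,d_M)$ into the normal form $\bigl(\smat{c_1}{0}{0}{0}w^t,\ \smat{d_1}{0}{0}{1}w^{-1}\bigr)$ with $c_1\in M_r(\z)$ invertible, factors out $|\det c_1|\ge 1$, and then applies the classical estimate $|\det(A+iB)|\ge\det B$ (for symmetric $A$ and $B>0$) to the reduced $r\times r$ block, using that $c_1^{-1}d_1$ is symmetric; this gives $|\det(c_MZ+d_M)|\ge y_0^r\det\bigl((w^tw)_1\bigr)\ge y_0^r$. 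Your argument instead squares, observes that the cross term in $W\bar W^t$ vanishes by $c_Md_M^t=d_Mc_M^t$, and reduces the whole thing to Cauchy--Binet for the real $n\times 2n$ matrix $X=(c_MU+d_M\mid y_0c_M)$, extracting a single mixed minor. The key structural input --- that $(c_M\mid d_M)$ has full row rank, which follows from $a_Md_M^t-b_Mc_M^t=I_n$ applied to any $v\in\ker c_M^t\cap\ker d_M^t$ --- is exactly the coprimality of the symmetric pair, but used directly without the normal form. Both proofs arrive at the sharper bound $y_0^{\rk c_M}$, and both use the integrality of $c_M,d_M$ to show the residual determinant is $\ge 1$; the paper buries this in $\det\bigl((w^tw)_1\bigr)\ge1$, while you make it explicit as a nonzero integer minor. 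Your route avoids Siegel's lemma entirely and the standard $|\det(A+iB)|\ge\det B$ inequality, at the modest cost of the combinatorial selection of the mixed column set $I_0$ (which you justify correctly). It is more elementary and self-contained; the paper's is shorter granted the normal-form lemma. Either would serve.
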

\begin{proof}
Since $M$ is fixed, we omit the superscript in $c_M$ etc. Let $r=\text{rank}(c)$. Since $c\neq 0$, we have $r\ge 1$. Then, for any such co-prime symmetric pair $(c,d)$ as in the lemma, using Siegel's lemma (e.g., see \cite[Lemma 3.1]{das2015nonvanishing}) we can write $(c,d)=(\smat{c_1}{0}{0}{0} w^t, \smat{d_1}{0}{0}{1} w^{-1})$. Here $w\in\GL{n}{\z}$ and $c_1$ is a $r \times r$ matrix of rank $r$.

Next, we have 
\begin{align}
    |\det(cZ+d)| = |\det(c_1 (Z[w])_1 + d_1)| \ge |\det( (Z[w])_1 + c_{1}^{-1} d_1)| \ge \det (\im (Z[w])_1),
\end{align}
where $Z_1$ denotes the upper left corner of $Z$ of size $r$ and we have used the fact that $c_1^{-1} d_1$ is symmetric—this follows because $(c_1,d_1)$ is again a co-prime symmetric pair. For completeness, let us mention that we used the standard inequality $|\det(A+iB)| \ge \det B$, where $A,B \in \Sym{n}{\mbb R}$ and $B>0$.

Writing $Z=U+iy_0I_n$, we see that
\begin{equation}
    \im (Z[w])_1= (y_0I_n[w])_1=y_0 (w^tw)_1.
\end{equation}
Since $w$ has integer entries, it follows immediately that for any $y_0>1$, $|\det (cZ+d)|\ge y_0^r \ge y_0$.
This completes the proof of the lemma.
\end{proof}

From \cite[Prop.~6.1]{bouganis2020algebraicity}, $P_{k,S}^{l,r}$ converges absolutely and locally uniformly (uniformly on all compact sets) provided $k>n+g+1$. For us the compact set will be
\begin{align} \label{pint}
    \mc C(y_0) = \{ (Z,W) \in \hn\times M_{g,n}(\mbb C) \colon U \bmod{1}, V=y_0I_n; \, X \bmod{1}, Y=0  \};
\end{align}
where $y_0 >1$ has been chosen as in \lemref{det>1} -- so that for all $Z$ as above, one has $|\det(cZ+d)| >1$ for all $M \in  \Sp{n}{\z}$ with $c_M \neq 0$. 

We can write
\begin{align}\label{Clrint}
   C(l',r';P_{k,S}^{l,r}) = \int_{(Z,W) \in \mc C(y_0)} P_{k,S}^{l,r}(Z,W)\, e\big(- \tr (l' Z + r'^t W) \big) dZ \, dW.
\end{align}
For us, it would be sufficient to assume $l=I_n, r=0$, and $r=r', l=l'$; but we prove a somewhat general result. 

Write, $P_{k,S}^{l,r} = \sum_\gamma A_\gamma$ with $\gamma = (M, (\lambda, 0))\in \Gamma^J_{(n,g);\infty} \backslash \Gamma^J_{n,g} $. From \eqref{PSdef} and \eqref{Jacobiaction}, $A_\gamma$ is given by
\begin{align}
     \frac{e\big(- \tr (S \ww (c_MZ+d_M)^{-1} c \ww^t) \big) 
    e\big( \tr(S Z[\lambda] +2 S \lambda W^t  \big)  e \big( \tr(l M\lan Z \ran + r^t \ww(c_MZ+d_M)^{-1} \big)}{ \det(c_MZ+d_M)^{k} }, \label{trocean}
\end{align}
where we have put $\ww := W+\lambda Z$.

First, we show that for any $(Z,W) \in \mc C (y_0)$ and $\gamma$ as above with $c_M\neq 0$, $A_\gamma \to 0$ as $k \to \infty$ \textit{not depending on $S$}. To see this, let us put $\mc Z = \smat{Z}{W^t}{W}{Z'}$ and $\mc T = \smat{l}{r^t/2}{r/2}{S}$ (here $Z'$ will be chosen suitably so that $\mc Z \in \mbb H_{n+g}$). Denote by $\gamma^\uparrow$, the image of $\gamma$ under the embeddings of $\Sp{n}{\mathbb Z}$ and the Heisenberg group $H_{\mathbb Z}^{n,g}$ into $\Sp{n+g}{\z}$. Then $\gamma^\uparrow \lan \mc Z \ran  $ can be obtained from the action of $\Sp{n+g}{\z}$ on $\mathbb H_{n+g}$ (see section~\ref{prelim}~(3) and \cite{ziegler1989jacobi}). To be precise, for $\gamma = (M, (\lambda, 0))$, we have
\begin{equation}
   \gamma^\uparrow \lan \mc Z \ran =\smat{M\lan Z\ran}{M\lan Z \ran \lambda^t+a_M W^t}{\ww(c_MZ+d_M)^{-1}\q}{\ww(c_M Z+d_M)^{-1}(d_M\lambda^t-c_M W^t)+\lambda W^t+Z'}.
\end{equation}
Then a calculation shows, 
\begin{align} \label{agamma}
    A_\gamma = \det(c_MZ+d_M)^{-k} e\big(\tr (\mc T \cdot \mc \gamma^\uparrow \lan \mc Z \ran) \big) \, e \big( - \tr(SZ') \big).
\end{align}

It is clear that $|e\big(\tr (\mc T \cdot \mc \gamma^\uparrow \lan \mc Z \ran) \big)| \le 1$ and
\begin{align}\label{agammabound}
    |A_\gamma| \le |\det(cZ+d)|^{-k} \exp \big(2 \pi \tr(S Y') \big).
\end{align}
We choose $Y':=S^{-1}$. Clearly $\im (\mc Z) = \smat{V}{0}{0}{S^{-1}}$ since $Y=\im(W)=0$, and it is $>0$ since $V>0$ and $S>0$. 
Next, from \lemref{det>1}, we have
\begin{align}
    |\det(c Z+d)|^{-k} |\exp \big(2 \pi \tr(S Y') \big)| \le y_0^{-rk} e^{2 \pi g} \le y_0^{-k} e^{2 \pi g},
\end{align}
uniformly for all $(l,r;S)$ since $r \ge 1$ (as $c_M\neq 0$). This shows that $A_\gamma\to 0$ as $k\to \infty$ \textit{uniformly for all} $S$.

We also see from \eqref{agamma} and  \lemref{det>1}, that for $(Z,W)\in \mc C(y_0)$, the Poincar\'e series $P_{k,S}^{l,r}$ is dominated term wise by the series (since $k>2n+g$)
\begin{equation}
    \mc M_{S}^{l,r}:=  e^{2 \pi g}  \sumn_\gamma |\det(cZ+d)|^{-2n-g-1} \exp(-2\pi \tr(\mc T \cdot \im ( \gamma^\uparrow \lan \mc Z \ran) )\big),
\end{equation}
which converges absolutely and locally uniformly on $\hn\times M_{g,n}(\mbb C)$ and is independent of $k$. Combining this with the fact that $A_\gamma\to 0$ when $c_\gamma:=c_M\neq 0$ as $k \to \infty$, we get that $P_{k,S}^{l,r} \to \sum_{c_\gamma=0} A_\gamma$ by applying Lebesgue's dominated convergence theorem. 

Next, put
\begin{equation}
    \mc C_S := \{ \smat{Z}{W^t}{W}{i S} \in \mbb H_{n+g} | (Z,W) \in \mc C(y_0) \}.
\end{equation}
Note that any $ M\in \Sp{n,\infty}{\z} \backslash \Sp{n}{\z}$ with $c_M=0$ can be taken to be $M=\smat{u}{0}{0}{(u^t)^{-1}}$ with $u\in \GL{n}{\z}$. Thus, by using the dominated convergence theorem again, we see that (cf. \eqref{Clrint} and \eqref{agamma}) 
\begin{align}
    \lim_{k \to \infty}   C(l',r';P_{k,S}^{l,r}) &= \sumn_{\gamma, \, c_\gamma=0} \int_{\mc C_S} A_\gamma \, e\big(- \tr (l' Z + r'^t W) \big) dZ \, dW \n \\
    & = \sumn_{u\in \GL{n}{\z}, \lambda} \det(u)^k \int_{\mc C_S} 
    e\big(\tr (\mc T \cdot \mc \gamma^\uparrow \lan \mc Z  \ran - \mc{T}'\cdot\mc Z) \big)  dZ \, dW ,
\end{align}
where $\mc T' = \smat{l'}{r'^t/2}{r'/2}{S}$. Expanding out $\gamma^\uparrow \lan \mc Z  \ran$, we see that the integral is non--zero only when $\mc T [A] =\mc T'$, where $A=\smat{u}{0}{\lambda}{I_g}$. Thus, we obtain Proposition \ref{prop:asymporth}.

\subsection{Lower bound for Bergman kernel}\label{seclbd}
Using the asymptotic orthogonality property for the Jacobi Poincaré series from the previous section, we obtain the conjectured lower bound for $\sup(J^{cusp}_{k, S; (n,g)})$.

First, note that  the quantity $(\det v)^{k} e^{-4\pi \mrm{tr}( S v^{-1}[y] )} |\phi(\tau,z)|^2$ is invariant under $\Gamma^J_g$. Thus, using the Cauchy--Schwarz and basic integral inequalities, we have for any $v_0>0$ and $y_0\in M_{g,n}(\mbb R)$ (see \cite[section 3.1]{sd-hk} for example)
\begin{equation}
\begin{split}
\spJn & \gg (\det v_0)^{k} e^{-4\pi \mrm{tr}( S v_0^{-1}[y_0] )} e^{-\mrm{tr}(4\pi v_0)}\sumn_{\phi\in\mathcal{B}_{k,S;(n,g)}}|c_\phi(1_n,0)|^2.
\end{split}
\end{equation}

We also have that (from \eqref{PetTr})
\begin{equation}\label{LBPFC}
\sumn_{\phi\in\mathcal{B}_{k,S;(n,g)}}|c_\phi(1_n,0)|^2= \frac{ (4\pi)^{nk-n(n+g+1)/2} (\det 2S)^{n/2} }{\Gamma_n(k-\frac{n+g+1}{2})}C(1_n,0;P_{k,S}^{1_n,0}),
\end{equation}
From the asymptotic orthogonality, we have for $k\to \infty$,
\begin{equation}\label{c10P}
    C(1_n,0;P_{k,S}^{1_n,0})\gg 1.
\end{equation}
Combining this with \eqref{LBPFC} we see that
\begin{equation}
\begin{split}
    \spJn &\gg_{n,g} (\det v_0)^{k} e^{-4\pi \mrm{tr}( S v_0^{-1}[y_0] )} e^{-\mrm{tr}(4\pi v_0)} \frac{(4\pi)^{nk}(\det 2S)^{\frac{n}{2}}}{\Gamma_n(k-\frac{n+g+1}{2})}\\
    &\gg_{n,g} (\det v_0)^{k} e^{-4\pi \mrm{tr}( S v_0^{-1}[y_0] )} e^{-\mrm{tr}(4\pi v_0)} \frac{(4e\pi)^{nk} k^{\frac{3n(n+1)}{4} + \frac{gn}{2}} \det(2S)^{\frac{n}{2}}}{k^{nk}},
\end{split}
\end{equation}
by Stirling's formula for the Gamma function.

Then choosing  $v_0=\frac{k}{4\pi}1_n$ and $y_0=0$ we get  the following lower bound.
\begin{prop}\label{prop:low}
Let $k$ be even. Then
\begin{equation}
  \spJn \gg_{n,g} k^{\frac{3n(n+1)}{4} + \frac{gn}{2}} \det(2S)^{\frac{n}{2}} \q\text{ as } k\rightarrow\infty.
\end{equation}
\end{prop}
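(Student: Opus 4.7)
The plan is to bound $\spJn$ from below by the value of $\mathbb{B}_{k,S;(n,g)}$ at a single carefully chosen point, and to engineer matters so that this value contains a single Fourier-coefficient sum that can be evaluated via the Petersson trace formula and the asymptotic orthogonality we have already established. The starting observation is that the integrand $(\det v)^{k} e^{-4\pi \tr(Sv^{-1}[y])}|\phi(\tau,z)|^2$ is $\Gamma^J_{n,g}$-invariant. Integrating this in $u$ and $x$ over a fundamental domain for the integer translations sitting inside $\Gamma^J_\infty$ and invoking Parseval kills the Fourier cross-terms, and since the supremum of an invariant function dominates its average, one obtains, for every $v_0 > 0$ and $y_0 \in M_{g,n}(\mbb R)$,
\begin{equation*}
  \spJn \gg (\det v_0)^{k}\, e^{-4\pi \tr(Sv_0^{-1}[y_0])}\, e^{-4\pi \tr(v_0)} \sumn_{\phi \in \mc B_{k,S;(n,g)}} |c_\phi(1_n,0)|^2,
\end{equation*}
where dropping all but the $(l,r) = (1_n,0)$ mode accounts for the factor $e^{-4\pi \tr(v_0)}$.

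Next I would convert the inner sum into a Poincar\'e coefficient. Applying the Petersson trace formula \eqref{PetTr} with $(l,r) = (l',r') = (1_n,0)$ and the explicit $\lambda_{k,S;(n,g)}(1_n,0)$ from \eqref{lambda-def} gives
\begin{equation*}
  \sumn_\phi |c_\phi(1_n,0)|^2 = \frac{(4\pi)^{nk - n(n+g+1)/2} (\det 2S)^{n/2}}{\Gamma_n\!\left(k - \tfrac{n+g+1}{2}\right)} \, C(1_n,0;P_{k,S}^{1_n,0}).
\end{equation*}
This is where \propref{prop:asymporth} is the key input: it forces $C(1_n,0;P_{k,S}^{1_n,0}) \to \delta_S(1_n,0;1_n,0)$ as $k \to \infty$, and $\delta_S(1_n,0;1_n,0) \ge 1$ because the pair $(A,\lambda) = (I_n, 0)$ always lies in the counted set. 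In particular $C(1_n,0;P_{k,S}^{1_n,0}) \gg 1$ for all large $k$.

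The last step is analytic optimization. Stirling applied to each factor of $\Gamma_n$ yields $\Gamma_n(k - (n+g+1)/2) \asymp (k/e)^{nk} k^{-n(3n+2g+3)/4}$ up to constants depending only on $n,g$, so the fraction $(4\pi)^{nk}/\Gamma_n(\cdot)$ contributes $(4\pi e/k)^{nk} \cdot k^{n(3n+2g+3)/4}$. I would kill the $S$-dependent damping by choosing $y_0 = 0$, and optimize the remaining weight factor by taking $v_0 = (k/4\pi)\,I_n$, which maximizes $(\det v_0)^k e^{-4\pi \tr(v_0)} = (k/4\pi e)^{nk}$. The two exponential factors then cancel exactly, leaving the polynomial $k^{n(3n+2g+3)/4}\det(2S)^{n/2}$, which coincides with the claimed exponent $3n(n+1)/4 + gn/2$.

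The substantive work has already been done in \propref{prop:asymporth}; the remaining obstacles are mild. The one that requires care is the Parseval step in the Jacobi setting, where one must track the Heisenberg cosets correctly to ensure that integrating $|\phi|^2$ against $e(-\tr(\tau))$ genuinely isolates the $(1_n,0)$-coefficient. The other minor point is merely verifying that the qualitative $\gg 1$ furnished by \propref{prop:asymporth} is enough — which it is, since the claim is only an asymptotic lower bound with an implicit constant depending on $n,g$.
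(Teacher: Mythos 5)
Your proposal is correct and takes essentially the same route as the paper: the invariance of $(\det v)^k e^{-4\pi\tr(Sv^{-1}[y])}|\phi|^2$ to reduce the supremum to an integral that isolates the $(1_n,0)$-Fourier coefficient, the Petersson relation \eqref{PetTr} to convert $\sum_\phi |c_\phi(1_n,0)|^2$ into $C(1_n,0;P^{1_n,0}_{k,S})/\lambda_{k,S;(n,g)}(1_n,0)$, \propref{prop:asymporth} to guarantee $C(1_n,0;P^{1_n,0}_{k,S})\gg 1$, and Stirling with $v_0 = (k/4\pi)1_n$, $y_0=0$ to extract the exponent. The only cosmetic difference is that you phrase the first reduction via Parseval and ``supremum dominates average'' where the paper cites Cauchy--Schwarz and \cite[\S 3.1]{sd-hk}; these amount to the same one-line calculation.
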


When $n=1$, the behaviour of Fourier coefficients $C(1_n,0;P_{k,S}^{1_n,0})$ is explicitly known (see \propref{proppkmnr} below and also \cite{das2010nonvanishing}). To be precise, we know that $C(1,0;P_{k,S}^{1,0})\gg 1$ for $k\ge k_0$ for some explicit $k_0$ depending only on $g$. Thus for such a $k_0$,
\begin{equation} \label{n1lbd}
  \sup(J^{cusp}_{k,S,g}) \gg_{g} k^{\frac{3}{2} + \frac{g}{2}} \det(2S)^{\frac{1}{2}} \q\text{ for }\, k\ge k_0;
\end{equation}

\section{Upper bounds for the sup-norms of Jacobi forms on average} \label{jacobi-supnorm}
\subsection{Bounds from the Fourier expansion.}\label{BFE}
First, we show (cf. \cite{sd-hk}) that the size of the Bergman kernel can be understood via the Fourier coefficients of the Siegel-Jacobi Poincar{\'e} series. Let $P_{k,S}^{n,r}\in J_{k,S,g}^{cusp}$ denote the $(n,r)$-th Jacobi Poincaré series and denote their Fourier coefficients by $C(n',r';P_{k,S}^{n,r})$. Then we have 
\begin{equation}\label{PTF}
    \sumn_{\phi\in\mathcal{B}_{k,S,g}}|c_{\phi}(n,r)|^2=C(n,r;P_{k,S}^{n,r})\lambda_{k,S,D}^{-1}=:p_{k,S}(n,r),
\end{equation}
where we write $D=\det\psmb 2n& r\\ r^t&2S \psme$ and 
\begin{equation} \label{lk}
    \lambda_{k,S,D}= \Gamma(\ell)(\det 2S)^{\ell-1/2} 2^{-g/2}(2\pi D)^{-\ell} \,\text{ with } \ell=k-g/2-1.
\end{equation}
Then same arguments as in \cite[Lemma~4.4]{sd-hk} give us that
\begin{equation*}
    \mathbb{B}_{k,S,g}(\tau,z)\le v^{k}e^{-4\pi S[y^t]/v}\big( \sum_{n,r; \, 4n> S^{-1}[r]}p_{k,S}(n,r)^{\frac{1}{2}}e^{-2\pi nv}e^{-2\pi ry}\big)^2=: q_{k,S}(y,v)^2.
\end{equation*}
Since $\mathbb{B}_{k,S,g}(\tau,z)$ is invariant under the Jacobi group, we may and will restrict $(\tau,z)$ in the standard Jacobi fundamental domain given by (here $\mc F$ is the standard fundamental domain for the action of $\SL{2}{\z}$ on $\mbb H$)
\begin{align} \label{jacobi-funddom}
    \mc F^J = \mc F^J_g := \{ \tau=u+iv, z=x+iy \mid \tau \in \mc F, y \bmod v,\, x\bmod 1\}.
\end{align}

Using bounds for the Salie-Kloosterman sums (cf. \cite{boecherer1993estimates}) appearing in the expression for $p_{k,S}(n,r)$ and that on the Bessel functions, we get the following asymptotic for the $(n,r)$-th Fourier coefficient of the Jacobi Poincar{\'e} series $P_{k,S}^{n,r}$ (see \cite[(5.1),(5.2)]{das2010nonvanishing} for the bounds).

\begin{prop}\label{proppkmnr}
Let $p_{k,S}(n,r)$ be as above. Then for, $k\ge g+2$ (and $\ell$ as in \eqref{lk})
\begin{equation}\label{p(T)bound}
 p_{k,S}(n,r) = \frac{ 2^{g/2} (2\pi D)^\ell}{\Gamma(\ell)(\det 2S)^{\ell-1/2}}\big(2+ O\big( \frac{D^{g/2+\epsilon}}{\ell^{g/2+1/3}(\det 2S)^{g/2+1/2+\epsilon}} \big) \big).
\end{equation}
\end{prop}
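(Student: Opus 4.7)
The Petersson trace formula \eqref{PTF} already gives $p_{k,S}(n,r) = \lambda_{k,S,D}^{-1}\,C(n,r;P_{k,S}^{n,r})$, and the prefactor
\[
\lambda_{k,S,D}^{-1} \;=\; \frac{2^{g/2}(2\pi D)^\ell}{\Gamma(\ell)(\det 2S)^{\ell-1/2}}
\]
is precisely the leading factor appearing in the target estimate \eqref{p(T)bound}. Consequently, what remains is to establish that
\[
C(n,r;P_{k,S}^{n,r}) \;=\; 2 \;+\; O\!\left(\frac{D^{g/2+\epsilon}}{\ell^{g/2+1/3}\,(\det 2S)^{g/2+1/2+\epsilon}}\right).
\]

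First I would unfold the defining sum \eqref{PSdef} of $P_{k,S}^{n,r}$ and extract the $(n,r)$-th Fourier coefficient. Parametrizing $\Gamma^J_{(1,g);\infty}\backslash \Gamma^J_{1,g}$ by the bottom row $(c,d)$ of the $\mathrm{SL}_2(\mathbb Z)$-component together with the Heisenberg data, the two cosets with $c=0$ (namely $\pm I$, both contributing to the same $(n,r)$-th coefficient since $k$ is even) produce the constant $2$, while the cosets with $c\ge 1$ yield a Poincar\'e-type series schematically of the form
\[
2\pi \sumn_{c\ge 1} \frac{H_{S,c}(n,r;n,r)}{c}\, J_{\ell}\!\left(\frac{2\pi D}{c\,(\det 2S)^{1/2}}\right),
\]
where $H_{S,c}$ is a generalized Sali\'e–Kloosterman sum attached to $(\mathbb Z^g,S)$ and $J_\ell$ is the usual Bessel function (this is the shape written down in \cite[\S 5]{das2010nonvanishing}, with precursors in \cite{ziegler1989jacobi} and \cite{boecherer1993estimates}).

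To bound the $c$-series I would use the two inputs quoted by the authors. The first is a Weil–Sali\'e type bound on $H_{S,c}$ from \cite{boecherer1993estimates}, which saves a factor essentially of size $(\det 2S)^{-g/2-1/2-\epsilon}$ together with tame $c^{1/2+\epsilon}$-growth in $c$. The second is the standard family of Bessel bounds: $J_\ell(x)\ll (ex/2\ell)^{\ell}$ for $x\le \ell/2$, $J_\ell(x)\ll \ell^{-1/3}$ in the transition zone $x\asymp \ell$, and $J_\ell(x)\ll x^{-1/2}$ for $x\gg \ell$. Splitting the $c$-sum at the threshold $c_0\asymp D/(\ell\,(\det 2S)^{1/2})$ and applying the appropriate Bessel bound on each piece, the dominant contribution comes from $c$ near $c_0$ (the transition zone, responsible for the exponent $1/3$), and summing against the Sali\'e bound produces exactly the $D^{g/2+\epsilon}\,\ell^{-g/2-1/3}\,(\det 2S)^{-g/2-1/2-\epsilon}$ error after cancelling the $\lambda_{k,S,D}^{-1}$ prefactor. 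The main obstacle is the careful matching of exponents across the Sali\'e estimate, the Bessel transition bound, and the normalisation in $\lambda_{k,S,D}$; however these exponents have been tracked essentially in \cite[(5.1),(5.2)]{das2010nonvanishing}, so once the reduction above is in place the proof becomes a bookkeeping exercise rather than new analysis.
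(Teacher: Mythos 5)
Your plan matches the route the paper actually takes: the paper gives no internal proof of \propref{proppkmnr}, quoting instead the Sali\'e--Kloosterman estimates of B\"ocherer--Kohnen and citing \cite[(5.1),(5.2)]{das2010nonvanishing} for the resulting Bessel/Kloosterman bookkeeping; your unfolding--Kloosterman--Bessel scheme with a split of the $c$-sum at the transition range is exactly what those references carry out, so at the level of strategy there is nothing to add.

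One detail in your reduction is not quite right and deserves repair. You justify the main term $2$ by saying the two $c=0$ cosets, i.e.\ $\pm I$, ``both contribute to the same $(n,r)$-th coefficient since $k$ is even.'' But acting by $-I$ on $e(n\tau+r z)$ gives (for $k$ even) $e(n\tau - r z)$, which lands in the $(n,-r)$-th Fourier coefficient, not the $(n,r)$-th; they coincide only when $r$ satisfies a congruence mod $2S$, concretely when $S^{-1}r^{t}\in\mathbb Z^{g}$. Moreover the $c=0$ cosets of $\Gamma^{J}_{\infty}\backslash\Gamma^{J}$ are not just $\pm I$: they are indexed by $u\in\GL{1}{\z}=\{\pm 1\}$ \emph{together with} a Heisenberg shift $\lambda\in\mathbb Z^{g}$, and the ones that survive extraction of the $(n,r)$-th coefficient are counted precisely by the quantity $\delta_{S}(n,r;n,r)\in\{1,2\}$ of the paper's \propref{prop:asymporth}. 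So the honest main term is $\delta_S(n,r;n,r)$, not a uniform $2$; the ``$2$'' in \eqref{p(T)bound} is being used as an upper bound for this count, which is all the downstream estimate of $q_{k,S}$ needs. Your conclusion therefore stands, but the stated reason for the constant is incorrect for generic $(n,r)$ and should be replaced by the $\delta_S$-count argument. The remaining exponent-matching (the $(\det 2S)^{-g/2-1/2-\epsilon}$ from the Sali\'e bound, the $\ell^{-1/3}$ from the Bessel transition zone, the $D^{g/2}$ from the $\lambda$-sum) you delegate to \cite{das2010nonvanishing}, which is the same level of detail the paper adopts.
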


For convenience, let us define the quantities $v'$ and $\widetilde{S}>0$, a matrix with integral entries, by
\begin{align} \label{v'Stilde}
v':=v(2\det(2S))^{-1}, \q \widetilde{S}:= \det(2S)(2S)^{-1}. 
\end{align} 
Let us note that $D=\det(2S)(2n -(2S)^{-1}[r])= 2n \det(2S) - \widetilde{S}[r] $. Now consider the following sum over $D$ and $r$:
\begin{equation}\label{sumQ}
    \mc Q(\alpha,\beta):=\sumn_{D,r}  ( 4\pi v'D)^{\ell/2} \big/ \Gamma(\ell)^{1/2} \exp\left(-2\pi v'D\right)) \ell^{\alpha} D^{-\beta} \exp(-2\pi(ry+\widetilde{S}[r]v')),
\end{equation}
where $r$ runs over $\mbb Z^g$ and $D>0$ varies such that $D\equiv - \widetilde{S}[r]  \bmod{\det(2S)}$. 

Corresponding to the two terms in \eqref{p(T)bound} we can then write
\begin{equation}\label{qkS}
q_{k,S}(y,v)\ll  q_{k,S}^{(1)}(y,v)+q_{k,S}^{(2)}(y,v) ,
\end{equation}
where
\begin{align}
q_{k,S}^{(1)}(y,v)&=  v^{\frac{g}{4}+\frac{1}{2}}(\det (2S))^{\frac{1}{4} } e^{-2\pi S[y]/v} \mc Q(0, 0), \label{q1} \\
%\sum_{4n> S^{-1}[r^t]} (2\pi D)^{\ell/2} v^{\frac{k}{2}}  e^{-2\pi (nv + ry)} ,  \\
q_{k,S}^{(2)}(y,v)&=v^{\frac{g}{4}+\frac{1}{2}} (\det (2S))^{-\frac{g}{4}-\epsilon} e^{-2\pi S[y]/v} \mc Q\big(-\frac{g}{4}-\frac{1}{6}, -\frac{g}{4}-\epsilon\big). \label{q2}
%\sum_{4n> S^{-1}[r^t]}  (2\pi )^{\frac{\ell}{2}} D^{\frac{\ell}{2}+\frac{g}{4}+\epsilon} v^{\frac{k}{2}} e^{-2\pi (nv + ry)} .
\end{align}
In \eqref{sumQ}, denote the sum over $D>0$ by $\mc Q_1$. Using Stirling's formula, we have 
\begin{equation} \label{Q1Dsum}
    \mc Q_1(\alpha,\beta) \asymp \sumn_D \big( 4\pi v'D \big/ \ell \big)^{\ell/2}\exp\left(\ell/2-2\pi v'D\right) \ell^{\alpha+1/4} D^{-\beta}.
\end{equation}

\begin{lem}\label{lemQ1}
Let $\mc Q_1(\alpha,\beta)$ be as above. Then (with $\ell$ as in \eqref{lk}),
\begin{equation} 
\mc Q_1(\alpha,\beta)\ll \begin{cases} v^{\beta} \ell^{\alpha-\beta+1/4} (\det(2S))^{-\beta} & \text{ for } v\gg \ell^{1/2+\epsilon};\\
\ell^{\alpha-\beta+3/4+\epsilon} v^{\beta-1}  (\det (2S))^{- \beta} \left(1+v\ell^{-1/2}\right) & \text{ for }  v\ll \ell^{1/2+\epsilon}.
\end{cases}
\end{equation}
\end{lem}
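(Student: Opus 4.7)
The plan is to analyze the summand of \eqref{Q1Dsum} as a function of the (essentially continuous) variable $D$, locate its peak, and then either bound the sum by its maximum term or approximate it by an integral, according to whether the effective peak width is smaller or larger than the spacing $\det(2S)$ of the arithmetic progression in which $D$ lives. Writing the summand as $\ell^{\alpha+1/4} g(D)$ with $g(D) = h(D) D^{-\beta}$ and $h(D) := (4\pi v' D/\ell)^{\ell/2} \exp(\ell/2 - 2\pi v' D)$, and substituting $y = D/D_0$ with $D_0 := \ell/(4\pi v')$, one gets $\log h(D) = (\ell/2)(\log y - y + 1)$. Since $\log y - y + 1 \le 0$ vanishes only at $y = 1$ with Taylor expansion $-(y-1)^2/2 + O((y-1)^3)$ there, $h$ is a Gaussian-type bump around $D = D_0$ of effective width $D_0/\sqrt{\ell}$ and maximum $1$, so $\max_D g(D) \asymp D_0^{-\beta}$.

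For the first range $v \gg \ell^{1/2+\epsilon}$, since $D_0 = \ell\det(2S)/(2\pi v)$, the peak width in $D$ equals $D_0/\sqrt{\ell} \asymp \sqrt{\ell}\det(2S)/v \ll \ell^{-\epsilon}\det(2S)$, strictly smaller than the progression spacing $\det(2S)$. Hence only $O(1)$ lattice points contribute appreciably and bounding the sum by its maximum gives $\sum_{D} g(D) \ll D_0^{-\beta} \asymp v^\beta \ell^{-\beta} \det(2S)^{-\beta}$; multiplying by $\ell^{\alpha+1/4}$ yields the claimed $v^{\beta}\ell^{\alpha-\beta+1/4}\det(2S)^{-\beta}$.

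For the second range $v \ll \ell^{1/2+\epsilon}$, the Gaussian width exceeds $\det(2S)$, so the sum is well-approximated by a Riemann-type integral:
\[
\sum_{D} g(D) \;\ll\; \det(2S)^{-1} \int_0^{\infty} g(D)\, dD \;+\; \max_D g(D).
\]
The change of variable $D = D_0 y$ converts the integral to $D_0^{1-\beta}\int_0^{\infty} y^{-\beta} h(D_0 y)\, dy$, and a Laplace/Gaussian evaluation around $y = 1$ using the Taylor expansion above shows that this remaining one-dimensional integral is $\ll_\epsilon \ell^{-1/2+\epsilon}$, the $\ell^\epsilon$ absorbing the effect of the weight $y^{-\beta}$ away from the peak. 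Plugging back in and using $D_0 \asymp \ell \det(2S)/v$ yields $\sum_D g(D) \ll \ell^{1/2-\beta+\epsilon} v^{\beta-1} \det(2S)^{-\beta}$; the maximum-term correction $\max_D g(D) \asymp D_0^{-\beta}$ becomes comparable near the transition $v \sim \sqrt{\ell}$ and is precisely what generates the factor $(1 + v/\sqrt{\ell})$ after multiplying by $\ell^{\alpha+1/4}$.

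The main obstacle will be making the Laplace approximation of the integral rigorous and \emph{uniform} in all parameters $(\alpha, \beta, \ell, v, \det(2S))$: one must verify that the weight $y^{-\beta}$ contributes only an $\ell^\epsilon$ factor rather than a polynomial in $\ell$ (acceptable for the specific $\beta$ values arising in \eqref{q1}, \eqref{q2}), and also check that the error in comparing the sum over the arithmetic progression modulo $\det(2S)$ to the continuous integral stays within the stated $(1 + v/\sqrt{\ell})$ correction, particularly when the peak location $D_0$ lies close to the smallest admissible $D$ in the progression.
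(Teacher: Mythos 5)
Your argument is correct and takes essentially the same route as the paper: both proofs locate the peak of the summand at $D_0 = \ell/(4\pi v')$, use the Gaussian-type concentration to restrict attention to a window of width $\asymp D_0/\sqrt{\ell}$ in $D$, and then count lattice points of the progression $D \equiv -\widetilde{S}[r] \pmod{\det(2S)}$ inside that window, with the dichotomy $v \gtrless \ell^{1/2+\epsilon}$ being exactly whether that count is $O(1)$ or dominated by $\ell^{1/2+\epsilon}/v$. The paper handles the wide-peak regime by (count $\mc{C}_v$) $\times$ (maximum of the summand) rather than by your sum-to-integral comparison, but the two estimates are equivalent here since the integral over the peak is itself $\asymp$ width $\times$ max.
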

\begin{proof}
First, note that the function $h(x):= (4\pi x/\ell)^{\ell/2}\exp(\ell/2-2\pi x)$ attains maximum at $x=\ell/(4\pi)$. Moreover, when $|x-\ell / (4\pi)| \gg \ell^{1/2+\epsilon}$, $h(x)$ has exponential decay in $\ell$. Below, we take some care with respect to the parameter $S$, which we want to control uniformly.

    When $v'D\not \in \ell / (4\pi) +O(\ell^{1/2+\epsilon})$, we divide the range of $D$ into $4\pi v'D\le 2\ell$ and the dyadic intervals $2^t\ell< 4\pi v'D \le 2^{t+1}\ell$ for $t\ge 1$. Next, we count the $D$ satisfying the congruence condition $D \equiv - \widetilde{S}[r]  \bmod{\det(2S)}$. For each of the $D$ above, the corresponding summands decay sub-exponentially (cf. \cite[(4.26)]{sd-hk}). Finally, summing over these sub-intervals, we see that the tail of $\mc Q_1(\alpha,\beta)$ is bounded by $\exp(-c\ell^\epsilon)(v'\det(2S))^{-1}=\exp(-c\ell^\epsilon)v^{-1}$, for some constant $c>0$ (see \cite{sd-hk}, especially \cite[Lemma~4.10, Lemma~4.11]{sd-hk}).
This reduces the sum \eqref{Q1Dsum} into a finite sum over $D$ satisfying $v'D=\frac{\ell}{4\pi}+O(\ell^{1/2+\epsilon})$.

We put the relation $v'D \asymp \ell $ in the resulting finite sum and thus end up with the following bound:
$\mc Q_1(\alpha,\beta) \ll \mc C_{v} v'^{\beta} \ell^{\alpha-\beta+1/4}$ taking into account the congruence condition;
where 
\begin{equation} \label{cv}
    \mc C_{v}:=\# \{D>0\, |\, D \equiv - \widetilde{S}[r]  \bmod{\det(2S)} , v'D=\frac{\ell}{ 4\pi}+O(\ell^{1/2+\epsilon})\}\ll \frac{\ell^{1/2+\epsilon}}{\det(2S) v'} +1.
\end{equation}
Recall that $v'\det(2S)=v/2$ from \eqref{v'Stilde}.
From \eqref{cv}, $\mc C_{v}\ll 1$ if $\ell^{1/2+\epsilon}v^{-1}\ll 1$. In this case,
\begin{equation}
    \mc Q_1(\alpha,\beta)\ll v^{\beta} \ell^{\alpha-\beta+1/4} (\det(2S))^{-\beta}.
\end{equation}
In the other case, i.e., when $\ell^{1/2+\epsilon}v^{-1}\gg 1$, we have 
\begin{equation}\label{Q1bound}
    \mc Q_1(\alpha,\beta)\ll \ell^{\alpha-\beta+3/4+\epsilon}v^{\beta-1}  (\det (2S))^{-\beta} \,  \left(1+v\ell^{-1/2}\right).\qedhere
\end{equation}
\end{proof}

Since the bound for $\mc Q_1$ is independent of $r\in\z^g$, we have
\begin{equation} \label{rmyv1}
\mc Q(\alpha,\beta)\ll \mc Q_1(\alpha,\beta) R_S(y,v); \q R_S(y,v):=\sumn_{r\in \mbb Z^g}\exp(-2\pi(ry+\widetilde{S}[r]v')).%\le \sqrt{\frac{2m}{v}}e^{2\pi my^2/v}.
\end{equation}
To estimate $R_S(y,v)$, we simplify things by invoking the fact that $S$ can be assumed to be Minkowski-reduced. 
If we denote by $s_1,s_2,\ldots, s_g$ be the diagonal elements of $2S$ and put $r=(r_1,r_2,\ldots, r_g)$, then we know that
\begin{align}
    \widetilde{S}[r] \asymp_g (r_1^2\det(2S)/s_1 +  r_2^2\det(2S)/s_2 +\ldots  r_g^2\det(2S)/s_g).
\end{align}
Then the quantity $R_S(y,v)$ from \eqref{rmyv1} can be estimated as a product of $g$ one-dimensional sums:
\begin{align}
    R_S(y,v) &\ll \prodd_j \sumn_{r_j \in \z} \exp(-2 \pi (r_jy_j + r_j^2 v  s_j^{-1} )).
\end{align}
The one dimensional sums $R_{s_j}(y_j,v)$ are estimated using the Gaussian integrals (see e.g., \cite[No.~ 3.322]{gradshteyn2014table}), and we get
\begin{equation}\label{bndRm}
    R_{s_j}(y_j,v)\le e^{2\pi s_jy_j^2/v}\left(1+\sqrt{s_j v^{-1}}\right),
\end{equation}
so that
\begin{equation}\label{bndRM}
   R_S(y,v) \ll \prodd_j e^{2\pi s_j y_j^2/v}\Big(1+\sqrt{s_j v^{-1}}\Big) =  e^{2 \pi S[y^t]/v} \prodd_j \left(1+\sqrt{s_j v^{-1}}\right).
\end{equation}
Thus from \lemref{lemQ1}, \eqref{bndRm} and \eqref{qkS}, \eqref{q1}, \eqref{q2} along with the fact that $v \gg 1$ from \eqref{jacobi-funddom}, we arrive at the following bound for $q_{k,S}$.

\begin{lem}\label{lemqKS}
Let $q_{k,S}(y,v)$ be as in \eqref{qkS}. Then (with $\ell$ as in \eqref{lk})
\begin{equation}
   q_{k,S}(y,v) \ll \begin{cases} \ell^{1/4}\ (\det (2S))^{\frac{1}{4}}\ v^{\frac{g}{4}+\frac{1}{2}}\ \prod_j \left(1+\sqrt{s_j v^{-1}}\right) & \text{ for } v\gg \ell^{1/2+\epsilon};\\
   \ell^{3/4+\epsilon}\ (\det (2S))^{\frac{1}{4}}\ v^{\frac{g}{4}-\frac{1}{2}}\ \left(1+v\ell^{-1/2}\right) \prod_j \left(1+\sqrt{s_j v^{-1}}\right) & \text{ otherwise}.
   \end{cases}
\end{equation}
\end{lem}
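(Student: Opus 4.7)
The plan is to combine the decomposition \eqref{qkS} with the bounds \lemref{lemQ1} for $\mc Q_1(\alpha,\beta)$ and \eqref{bndRM} for $R_S(y,v)$, observe that the exponential factors cancel, and then check that the contribution from $q_{k,S}^{(1)}$ dominates the one from $q_{k,S}^{(2)}$ in both regimes of $v$.

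Concretely, I would first insert $\mc Q(\alpha,\beta)\ll \mc Q_1(\alpha,\beta)R_S(y,v)$ into the definitions \eqref{q1} and \eqref{q2}. From \eqref{bndRM},
\begin{equation*}
R_S(y,v)\ll e^{2\pi S[y^t]/v}\prodd_j\left(1+\sqrt{s_j v^{-1}}\right),
\end{equation*}
so the factor $e^{-2\pi S[y]/v}$ present in both $q_{k,S}^{(1)}$ and $q_{k,S}^{(2)}$ cancels exactly against the exponential coming from $R_S(y,v)$. This reduces the problem to bounding
\begin{equation*}
v^{g/4+1/2}(\det 2S)^{1/4}\mc Q_1(0,0) \q\text{and}\q v^{g/4+1/2}(\det 2S)^{-g/4-\epsilon}\mc Q_1\!\left(-\tfrac{g}{4}-\tfrac{1}{6},\,-\tfrac{g}{4}-\epsilon\right),
\end{equation*}
each multiplied by $\prod_j(1+\sqrt{s_j v^{-1}})$.

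Next I would apply \lemref{lemQ1} to each case. In the regime $v\gg\ell^{1/2+\epsilon}$, the lemma yields $\mc Q_1(0,0)\ll \ell^{1/4}$ and $\mc Q_1(-g/4-1/6,-g/4-\epsilon)\ll v^{-g/4-\epsilon}\ell^{1/12+\epsilon}(\det 2S)^{g/4+\epsilon}$. Plugging these in gives
\begin{equation*}
q_{k,S}^{(1)}\ll \ell^{1/4}(\det 2S)^{1/4}v^{g/4+1/2}\prodd_j(1+\sqrt{s_jv^{-1}}), \q q_{k,S}^{(2)}\ll \ell^{1/12+\epsilon}v^{1/2-\epsilon}\prodd_j(1+\sqrt{s_jv^{-1}}),
\end{equation*}
and since $\det 2S\ge 1$ and $v\gg 1$, the exponents make $q_{k,S}^{(1)}$ dominant, producing the first case of the claimed bound. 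In the regime $v\ll\ell^{1/2+\epsilon}$, \lemref{lemQ1} gives $\mc Q_1(0,0)\ll \ell^{3/4+\epsilon}v^{-1}(1+v\ell^{-1/2})$ and $\mc Q_1(-g/4-1/6,-g/4-\epsilon)\ll \ell^{7/12+\epsilon}v^{-g/4-\epsilon-1}(\det 2S)^{g/4+\epsilon}(1+v\ell^{-1/2})$. Substituting yields
\begin{equation*}
q_{k,S}^{(1)}\ll \ell^{3/4+\epsilon}(\det 2S)^{1/4}v^{g/4-1/2}(1+v\ell^{-1/2})\prodd_j(1+\sqrt{s_jv^{-1}}),
\end{equation*}
and a similar calculation shows $q_{k,S}^{(2)}$ is smaller, giving the second case.

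The only real care point, which I would treat as the main step, is the domination claim $q_{k,S}^{(2)}\ll q_{k,S}^{(1)}$: comparing the two contributions reduces to verifying inequalities of the type $\ell^{1/4}(\det 2S)^{1/4}v^{g/4+1/2}\gg \ell^{1/12+\epsilon}v^{1/2-\epsilon}$ in the first regime (where the $v$ and $\det 2S$ factors trivially absorb the small $\ell^\epsilon$ loss since $\det 2S,v\ge 1$) and analogously in the second regime. Once that comparison is settled, the two clean bounds stated in \lemref{lemqKS} follow directly, and the power $(1+v\ell^{-1/2})$ in the second case is what the $\mc Q_1$-estimate forces.
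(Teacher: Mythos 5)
Your proposal is correct and follows exactly the route the paper intends: substitute $\mc Q \ll \mc Q_1 R_S$ from \eqref{rmyv1}, apply \lemref{lemQ1} in each regime, cancel the exponentials against \eqref{bndRM}, and check that $q_{k,S}^{(1)}$ dominates $q_{k,S}^{(2)}$ using $v\gg 1$ and $\det(2S)\ge 1$. The paper itself gives no more detail than you do (it just cites the same ingredients), so there is nothing to compare; the only microscopic slip is writing $\ell^{7/12+\epsilon}$ where the computation gives $\ell^{7/12+2\epsilon}$, which is immaterial since $\epsilon$ is arbitrary.
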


\subsection{Bounds from the Bergman kernel.}\label{bkjkmsec}
We have from \cite[Lemma 3.2]{arakawa1992selberg} and \eqref{bkdef},
\begin{equation}\label{BKGeometric}
    B_{k,S,g}(\tau,z;\tau_0,z_0)= 2^{k-2}\ \pi^{-1}\ \ell \det (2S)^{1/2}\  i^{\ell+1}\ \sum_{\gamma\in\mrm{SL}_2(\mbb{Z})}(\Theta|^{(1)}_{k,S}\gamma)(\tau,z;\tau_0,z_0),
\end{equation}
where $\Theta |^{(1)} \gamma$ indicates the actions of $\gamma$ with respect to the first set of variables, and
\begin{align} \label{theta-def}
    \Theta=\Theta_{k,S}(\tau,z;\tau_0,z_0) &:=(\tau-\bar{\tau}_0)^{\frac{g}{2}-k}\sum_{\eta\in (2S)^{-1}\mathbb Z^g/\mathbb Z^g}\theta_{S,\eta}(\tau,z)\overline{\theta_{S,\eta}(\tau_0,z_0)}, \q \q \text{and}  \\
\theta_{S,\eta}(\tau,z)& :=\sumn_{r\in \mbb Z^g}e(\tau S[r+\eta]+2S(\eta+r,z)).
\end{align}

Let $\gamma=\begin{psmallmatrix}
a&b\\c&d
\end{psmallmatrix}\in \mrm{SL}_2(\mbb Z)$, then
\begin{equation}\label{thetatrans}
\theta_{S,\mu}\left(\gamma(\tau,z)\right)(c \tau + d)^{-\frac{g}{2}} e\left(-c S[z] (c \tau+d)^{-1}\right)=\sum_{\eta\in(2S)^{-1}\mathbb Z^g/\mathbb Z^g} \varepsilon_{S}(\eta, \mu;\gamma)\theta_{S,\eta}(\tau, z),
\end{equation}
where $\varepsilon_{S}(\eta, \mu;\gamma)$ are complex numbers of absolute value $1$ as in e.g. \cite{arakawa1992selberg}.
From \eqref{naiveBK} and \eqref{mBK} we have that
\begin{equation}
   \mathbb B_{k,S,g}(\tau,z)=v^{k}e^{-4\pi S[y^t]/v}B_{k,S,g}(\tau,z;\tau,z).
\end{equation}
Thus the modified Bergman kernel $\mathbb B_{k,S,g}(\tau,z)$ can now be written as
\begin{equation}\label{bkmtheta}
C_{k,S,g}(v,y)\sum_{\gamma\in\mrm{SL}_2(\mbb{Z})}v^k\  \left(j(\gamma,\tau)(\gamma(\tau)-\overline{\tau})\right)^{-k+\frac{g}{2}}\sum_{\eta,\mu}\varepsilon_{S}(\eta, \mu;\gamma)\theta_{S,\eta}(\tau, z)\overline{\theta_{S,\mu}(\tau,z)},
\end{equation}
where $C_{k,S,g}(v,y)=\ell \det (2S)^{1/2}\ e^{-4\pi S[y^t]/v}$.

From \eqref{bkmtheta}, we can bound $\mathbb B_{k,S,g}(\tau,z)$ by
\begin{equation}\label{BKbydef}
    \ll C_{k,S,g}(v,y) v^k\sum_{\gamma}\left|j(\gamma,\tau)(\gamma(\tau)-\overline{\tau})/2i\right|^{-k+\frac{g}{2}}\big|\sum_{\nu,\mu}\varepsilon_{S}(\nu, \mu;\gamma)\theta_{S,\nu}(\tau, z)\overline{\theta_{S,\mu}(\tau,z)}\big|.
\end{equation}

Let $\rho_S(\gamma)=\left(\varepsilon_{S}(\nu, \mu;\gamma)\right)_{\nu,\mu}$ and $\Theta=(\theta_{S,\mu})_{\mu}$. Then we know that $\rho_S(\gamma)$ is unitary (see e.g., \cite{arakawa1992selberg}). Thus the sum over $\nu,\mu$ is 
\begin{align}
    =\lan \rho_S(\gamma)\Theta,\Theta\ran {\ll}\lan \rho_S(\gamma)\Theta,\rho_S(\gamma)\Theta\ran^{1/2}\lan \Theta,\Theta\ran^{1/2} =\lan \Theta,\Theta\ran=\sumn_\mu |\theta_{S,\mu}|^2,
\end{align}
where we use Cauchy-Schwartz for the first inequality, and the unitary property of $\rho_S(\gamma)$ for the second. Here $\lan \, , \, \ran$ denotes the standard inner product on Euclidean space.

Note that from \cite[Lemma 3.2]{arakawa1992selberg}, we have (see also \cite[p.~182, (11)]{sko-zag} for $n=1$)
\begin{equation}
   \sumn_\mu |\theta_{S,\mu}(\tau,z)|^2 =(\det S)^{1/2}\ v^{-g/2}\ \Theta^*(\tau,z),
\end{equation}
where
\begin{align} \label{theta*}
   \Theta^*(\tau,z)&=  \sumn_{\lambda, \mu\in \mathbb Z^g} e\left(  -S [z  - \bar{z} -\lambda \bar{\tau} -\mu] (\tau - \bar{\tau})^{-1}   - \bar{\tau} S[\lambda] -  2 \lambda^t S \bar{z}  \right).
\end{align}
We want good upper bounds for $\Theta^*(\tau,z)$. In \eqref{theta*} we first consider the sum over $\mu$ and see that it is bounded by
\begin{align}
    \le \exp(4 \pi S[y]/v) \sumn_{ \mu\in \mathbb Z^g} \exp \Big(  \frac{\pi(-u^2+v^2)}{v} S[\lambda] - \frac{\pi}{v} S[\mu] -\frac{2 \pi}{v} u\lambda^t S\mu  +  4\pi y^t S \lambda \Big).
\end{align}
Now considering the sum over $\lambda$ as well and bounding it absolutely, we are left with
\begin{align}
     \Theta^*(\tau,z)& \ll \exp(4 \pi S[y^t]/v) \sumn_{\lambda, \mu}\exp \Big(  \frac{\pi(-u^2+v^2)}{v} S[\lambda] - \frac{\pi}{v} S[\mu] \Big)\n \\
     &\times \exp \Big( -\frac{2 \pi}{v} u\lambda^t S\mu  + 4 \pi y^t S \lambda - 2 \pi vS[\lambda] - 4 \pi \lambda^t S y \Big)\n \\
     & \ll \exp(4 \pi S[y^t]/v) \sumn_{\lambda, \mu} \exp \Big(  \frac{- \pi(u^2+v^2)}{v} S[\lambda] - \frac{\pi}{v} S[\mu] -\frac{2 \pi}{v} u\lambda^t S\mu \Big).
\end{align}
We then notice that 
\begin{align*}
    \sum_{\lambda, \mu} \exp \Big(  \frac{- \pi(u^2+v^2)}{v} S[\lambda] - \frac{\pi}{v} S[\mu] -\frac{2 \pi}{v} u\lambda^t S\mu \Big) = \sum_{\lambda, \mu} \exp \Big( - \pi v S[\lambda] - \frac{\pi}{v} S[u \lambda + \mu] \Big). \label{g-dimest}
\end{align*}
At this point, we can assume that $S$ is diagonal, with $s_j$ denoting the diagonal elements. Therefore, we can bound \eqref{g-dimest} as
\begin{align}
    \prod \nolimits_j \sumn_{\lambda_j, \mu_j} \exp \left( - a  s_j \lambda_j^2 - b s_j (u \lambda_j + \mu_j)^2 \right);
\end{align}
where $a \asymp_g v$ and $b \asymp_g v^{-1}$. For the one-dimensional sums, using the well-known bounds on Gaussian exponential integrals (see e.g., \cite[No.~ 3.322]{gradshteyn2014table}), we get
\begin{equation}
    \sumn_{p,q}  \exp (-  m (a p^2 + b (up+q)^2)\ll \left(1+(am)^{-1/2}\right)\left(1+(bm)^{-1/2}\right).
\end{equation}

Thus, we have
\begin{align}
   \Theta^*(\tau,z)\ll e^{4 \pi S [y^t]/v}\prod \nolimits_j \Big(1+ \sqrt{vs_j^{-1}}\Big) \Big(1+ \sqrt{(s_jv)^{-1}}\Big)\ll e^{4 \pi S [y^t]/v}\prod \nolimits_j \Big(1+ \sqrt{vs_j^{-1}}\Big).
\end{align}
Now we bound the sum over $\gamma$. Write $r=k-\frac{g}{2}\,(=\ell +1)$ and define
\begin{equation}
    M(\tau):=\sumn_{ \gamma} v^r \, \big |(\frac{\gamma(\tau)-\overline{\tau}}{2i})j(\gamma,\tau)\big |^{-r}.
\end{equation}
From Corollary 2.16 and Proposition 3.2 in \cite{RSSreal}, we have for any $\eta>0$ and $A>0$
\begin{equation}
    M(\tau)\ll 1+vr^{-\frac{1}{2}+\eta}+vr^{-A}.
\end{equation}
Then the sum over $\gamma$ in \eqref{BKbydef} is bounded by $M(\tau)$. Thus, we have
\begin{lem}\label{lemBKGeo}
For any $(\tau,z) \in\mc F^J$ and $\ell$ as in \eqref{lk},
\begin{align}\label{BKGeo}
    \mathbb B_{k,S,g}(\tau,z)\ll \ell \det (2S) \left( 1+v\ell^{-\frac{1}{2}+\eta}+v\ell^{-A}\right)\prod \nolimits_j \Big(1+ \sqrt{vs_j^{-1}}\Big).
\end{align}
\end{lem}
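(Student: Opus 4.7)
The plan is to assemble the estimates produced in the subsection into a single bound by following the structure laid out in equations \eqref{BKGeometric}--\eqref{theta*}. Starting from the geometric expression \eqref{BKGeometric} for $B_{k,S,g}(\tau,z;\tau,z)$, I would apply the theta transformation formula \eqref{thetatrans} to put $\mathbb B_{k,S,g}(\tau,z)=v^ke^{-4\pi S[y^t]/v}B_{k,S,g}(\tau,z;\tau,z)$ in the form \eqref{bkmtheta}, so that every $\gamma$-summand factors into three manageable pieces: a matrix coefficient $\varepsilon_S(\nu,\mu;\gamma)$ of the Weil-type representation $\rho_S$, the automorphy factor $v^k|j(\gamma,\tau)(\gamma\langle\tau\rangle-\bar\tau)|^{-k+g/2}$, and the theta sesquilinear sum $\sum_{\nu,\mu}\varepsilon_S(\nu,\mu;\gamma)\theta_{S,\nu}(\tau,z)\overline{\theta_{S,\mu}(\tau,z)}$.

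Next I would dispatch the inner $(\nu,\mu)$-sum. Writing it as the Hermitian pairing $\langle\rho_S(\gamma)\Theta,\Theta\rangle$ with $\Theta=(\theta_{S,\mu})_\mu$, the unitarity of $\rho_S(\gamma)$ combined with Cauchy--Schwarz bounds it by $\sum_\mu|\theta_{S,\mu}(\tau,z)|^2$, which by the standard identity equals $(\det S)^{1/2}v^{-g/2}\Theta^*(\tau,z)$ with $\Theta^*$ as in \eqref{theta*}. To bound $\Theta^*$ I would (Minkowski-)reduce to the diagonal case with diagonal entries $s_j$ of $2S$; this splits the $g$-dimensional theta sum into a product of one-dimensional Gaussian sums of the shape $\sum_{p,q}\exp(-m(ap^2+b(up+q)^2))$ with $a\asymp v$, $b\asymp v^{-1}$, which are bounded by $(1+(am)^{-1/2})(1+(bm)^{-1/2})$ via the classical Gaussian estimate \cite[No.~3.322]{gradshteyn2014table}. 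The crucial feature here is that $\Theta^*$ naturally carries the factor $e^{4\pi S[y^t]/v}$ which exactly cancels the corresponding exponential inside $C_{k,S,g}(v,y)=\ell\det(2S)^{1/2}e^{-4\pi S[y^t]/v}$, leaving only the product $\prod_j(1+\sqrt{v s_j^{-1}})$.

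Finally, I would control the outer sum over $\gamma\in\SL{2}{\z}$: by \eqref{BKbydef} this is exactly the quantity $M(\tau)$, for which \cite[Corollary~2.16, Prop.~3.2]{RSSreal} gives $M(\tau)\ll 1+v\ell^{-1/2+\eta}+v\ell^{-A}$ for any $\eta,A>0$. Collecting the prefactor $\ell\det(2S)^{1/2}$ from $C_{k,S,g}$, the factor $(\det S)^{1/2}\cdot\det(2S)^{1/2}\asymp \det(2S)$ coming from $\sum_\mu|\theta_{S,\mu}|^2$, the bound for $M(\tau)$, and the Gaussian product over $j$, yields precisely \eqref{BKGeo}.

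The main obstacle is the uniform control of $\Theta^*(\tau,z)$ in the translation variable $y$: a crude estimate would leave an exponential $e^{4\pi S[y^t]/v}$ that is unacceptable on the Jacobi fundamental domain $\mc F^J$. The whole argument hinges on recognising the cancellation against the $e^{-4\pi S[y^t]/v}$ in $C_{k,S,g}$, and then executing the $g$-fold Gaussian estimate uniformly in $S$ (which is why reducing $S$ to diagonal form is essential). Once that is done, the $\gamma$-sum is standard via \cite{RSSreal} and the remainder is bookkeeping.
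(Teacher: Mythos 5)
Your proposal follows exactly the paper's route: start from the Arakawa--Skoruppa--Zagier geometric expression \eqref{BKGeometric}, use the theta transformation to get \eqref{bkmtheta}, bound the inner $(\nu,\mu)$-pairing via unitarity of $\rho_S$ plus Cauchy--Schwarz, replace $\sum_\mu|\theta_{S,\mu}|^2$ by $(\det S)^{1/2}v^{-g/2}\Theta^*$, bound $\Theta^*$ by $g$-fold Gaussian sums with $S$ (Minkowski) diagonalized, and bound the outer $\gamma$-sum by $M(\tau)$ from \cite{RSSreal}. One small bookkeeping slip in your last paragraph: the theta-sum identity contributes only $(\det S)^{1/2}$, not $(\det S)^{1/2}\cdot\det(2S)^{1/2}$; it is the product of this $(\det S)^{1/2}$ with the single $\det(2S)^{1/2}$ already in $C_{k,S,g}$ that gives $2^{-g/2}\det(2S)\asymp_g\det(2S)$, whereas your phrasing would double-count and yield $\det(2S)^{3/2}$ — since your stated conclusion is nonetheless the correct $\ell\det(2S)$, this is a wording issue rather than a real gap, but worth tightening.
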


\subsection{Final Bounds} With the bounds on BK from the Fourier expansion obtained in subsection \ref{BFE} and the bounds from the geometric side of the BK in subsection \ref{bkjkmsec} at hand, we are now ready to prove the upper bound in \thmref{mainthm2}. To do this, we split the fundamental domain $\mc F^J_g$ ($\mc F_1$ to be precise) into suitable regions and use the bounds from the aforementioned subsections. In particular, we consider the following regions. Recall the definition of $\ell$ from \eqref{lk}.

\subsubsection{$v\gg \ell \det(2S)^{1+\epsilon}$:} In this region, we show that $\mathbb B_{k,S,g}(\tau,z)$ has exponential decay both in $\ell$ and $\det(2S)$. To do this, first note that from \eqref{qkS} and \eqref{sumQ}, $\mathbb B_{k,S,g}(\tau,z)$ is bounded by the quantity
\begin{equation}
   v^{g/2+1} \det(2S)^{1/2} e^{-4\pi S[y]/v} R_S(y,v)^2 \Big( \mc Q_1(0,0)^2+\det(2S)^{-g/2-1/2-\epsilon} \mc Q_1(-\frac{g}{4}-\frac{1}{6},-\frac{g}{4}-\epsilon)^2\Big).
\end{equation}
Next, using \cite[Lemma 4.11]{sd-hk}, for any finite $\alpha,\beta$, we see that the quantity $\mc Q_1(\alpha,\beta)$ has exponential type decay in the region $v'\gg \ell$. To be precise, 
\begin{equation}
    \mc Q_1(\alpha,\beta)\ll \exp(-c_0 v').
\end{equation}
Thus when $v\gg \ell (\det(2S))^{1+\epsilon}$, the quantity $\ \mathbb B_{k,S,g}(\tau,z)$ is (with some absolute constant $c>0$)
\begin{align}
  \ll  v'^{g/2+1} \det(2S)^{g/2+3/2}\ \prodd_j \left(1+ \sqrt{s_j v^{-1}}\right) \exp(-c_0\, v') \ll \exp(-c\, k (\det(2S))^{\epsilon}).
\end{align}

\subsubsection{$\ell \ll v \ll \ell (\det(2S))^{1+\epsilon}$:} In this region,  from \lemref{lemBKGeo} we get
\begin{align*}
    \bksg &\ll \ell^{3/2+ \epsilon} (\det(2S))^{2+\epsilon} \prodd_j \big(1+\sqrt{\frac{\ell (\det(2S))^{1+\epsilon}}{ s_j}} \big) 
    \ll \ell^{3/2+g/2+\epsilon} (\det(2S))^{g/2+3/2+\epsilon}.
\end{align*}
%Note here that usage of \lemref{lemqKS} gives the same bound as above.

\subsubsection{$ v\ll \ell $:}
First, consider the case $v \gg \ell^{1/2+\epsilon} \det(2S)$. In this region, from the first bound in \lemref{lemqKS}, we have
\begin{equation}
    \bksg \ll \ell^{3/2+g/2} \det(2S)^{1/2}.
\end{equation}
Note here that the condition on $v$ implies that $\det (2S)\ll \ell^{1/2-\epsilon}$.

Next, we consider the intersection of regions $v\ll \ell$ and $v\ll \ell^{1/2+\epsilon} \det(2S)$. Thus, we have to worry only about the case $\det(2S)\ll \ell^{1/2-\epsilon}$. In this region, from Lemma \ref{lemBKGeo}, we have
\begin{equation}
    \mathbb B_{k,S,g}(\tau,z) \ll \ell^{3/2+\epsilon} \det (2S)  \prod \nolimits_j \Big(1+ \ell^{1/2} s_j^{-1/2} \Big)\ll \ell^{3/2+g/2+\epsilon} \det(2S)^{1/2}.
\end{equation}
Thus we have the following bound on $\bksg$. 
\begin{prop} \label{jacobi-BK}
For $(\tau,z)\in \mc F^J$, we have
\begin{equation*}
    \bksg\ll k^{3/2+g/2+\epsilon} (\det(2S))^{g/2+3/2+\epsilon}.
\end{equation*}
\end{prop}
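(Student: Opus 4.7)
The plan is to bound $\bksg$ pointwise on the Jacobi fundamental domain $\mc F^J$ by partitioning it into ranges of $v$ and applying either the Fourier-expansion bound from \lemref{lemqKS} (via $\mathbb B_{k,S,g}\le q_{k,S}^2$) or the geometric/theta-function bound from \lemref{lemBKGeo}, whichever is stronger. The natural scales for the split are $v \asymp \ell$ and $v \asymp \ell(\det 2S)^{1+\epsilon}$; the first is the scale at which the Poincaré series on the geometric side starts to contribute beyond the identity term, and the second is the scale at which the Fourier-side Gaussian sum $\mc Q_1$ begins to decay exponentially.

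I would first dispatch the \emph{very large $v$} region $v\gg \ell(\det 2S)^{1+\epsilon}$. There the variable $v'=v/(2\det 2S)$ is much larger than $\ell$, and by Stirling the summand in $\mc Q_1(\alpha,\beta)$ behaves like $(4\pi v'D/\ell)^{\ell/2}e^{\ell/2-2\pi v'D}$, which is exponentially small away from the saddle $v'D\asymp\ell$; since $v'D\ge v'\gg\ell$ for all $D\ge 1$, every term is exponentially small in $\ell$, so $\mc Q_1\ll \exp(-c_0 v')$. Combining with the factors from \eqref{q1}--\eqref{q2} and the elementary bound \eqref{bndRM} on $R_S(y,v)$ (which absorbs the $e^{-4\pi S[y^t]/v}$ factor cleanly), one finds $\bksg \ll \exp(-c\, k(\det 2S)^\epsilon)$, which is negligible.

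Next, for the \emph{intermediate} range $\ell\ll v\ll \ell(\det 2S)^{1+\epsilon}$, I would apply \lemref{lemBKGeo}. Using $v\le \ell(\det 2S)^{1+\epsilon}$, the factor $1+v\ell^{-1/2+\eta}+v\ell^{-A}$ is $\ll \ell^{1/2+\eta}(\det 2S)^{1+\epsilon}$. For the product $\prod_j(1+\sqrt{v/s_j})$, since $2S$ is Minkowski-reduced we have $\prod_j s_j \asymp_g \det(2S)$, so bounding each factor and multiplying gives $\prod_j(1+\sqrt{v/s_j})\ll 1+v^{g/2}/\sqrt{\det 2S}\ll \ell^{g/2}(\det 2S)^{g/2-1/2+\epsilon}$. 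Multiplying everything by the leading $\ell\det(2S)$ yields $\bksg \ll \ell^{3/2+g/2+\epsilon}(\det 2S)^{g/2+3/2+\epsilon}$, matching the target.

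Finally, the \emph{small $v$} range $v\ll \ell$ splits into two sub-cases. When $v\gg \ell^{1/2+\epsilon}\det(2S)$, the first branch of \lemref{lemqKS} applies and, after squaring and using that this condition forces $\det(2S)\ll \ell^{1/2-\epsilon}$, gives $q_{k,S}^2\ll \ell^{3/2+g/2}\det(2S)^{1/2}$, which is well within the claim. When $v\ll \ell^{1/2+\epsilon}\det(2S)$ (with $v\ll\ell$), one is again in a regime where $\det(2S)\ll \ell^{1/2-\epsilon}$, and \lemref{lemBKGeo} gives $\bksg\ll \ell^{3/2+\epsilon}\det(2S)\prod_j(1+\sqrt{v/s_j})\ll \ell^{3/2+g/2+\epsilon}\det(2S)^{1/2}$, since each $(1+\sqrt{v/s_j})\ll \ell^{1/2}s_j^{-1/2}$. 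The main technical point requiring care is reconciling the various factors of $\prod_j \sqrt{s_j}$ with $\det(2S)$ in the intermediate regime; one uses Minkowski reduction to control the ratio, so that the $g/2+3/2$ exponent on $\det(2S)$ is produced uniformly. Putting the three regions together yields the uniform bound stated in the proposition.
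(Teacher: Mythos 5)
Your proposal matches the paper's proof essentially step for step: the same three-region split at $v\asymp\ell$ and $v\asymp\ell(\det 2S)^{1+\epsilon}$, with \lemref{lemqKS} (Fourier side) on the small-$v$ range, \lemref{lemBKGeo} (geometric/theta side) on the intermediate range, and the exponential decay of $\mc Q_1$ on the large-$v$ range, together with the same use of Minkowski reduction to relate $\prod_j s_j$ to $\det(2S)$. The argument is correct and not a distinct route.
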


\section{Contribution from index old and new forms.} \label{old-strange1}
In this section, we restrict ourselves to $n=1$ and $g=1$. We demonstrate that for certain indices $m$, there are `\textit{large}' natural subspaces of $J_{k,m}^{cusp}$. This is done by (unconditionally) computing lower bounds or even the sizes of their respective BK.
Here, by \textit{index old subspaces} (cf. \cite[p.~138]{skoruppa1988jacobi}) we mean the image of lower index Jacobi forms in $J_{k,m}^{cusp}$ under the (combination of the) Hecke operators $U_l$ and $V_m$. We also show that for BK of the \textit{index newspace}, that is the orthogonal complement of the index old subspace in $J_{k,m}^{cusp}$, the same is true as mentioned above. Unconditionally, we also show that it is possible to improve the upper bound from Proposition \ref{jacobi-BK} for some of these old subspaces, which indicate that the bound on BK is closer to the conjectured bound than that implied by \propref{jacobi-BK}.

\subsection{Size of some index-old spaces.}
In this section we demonstrate our assertions made above about the sizes of certain index-old subspaces.

\subsubsection{The old-space $U_l(\jk)$}
Consider the operator $U_l:J_{k,1}\rightarrow J_{k,l^2}$. We estimate the contribution of the old-space $U_l (\jk)$ towards the BK for $J_{k,l^2}^{cusp}$. That is, we estimate the quantity
\begin{align}
    \sup(U_l( \jk)) := \sup \nolimits_{\h \times \mbb C}\sumn_\phi \frac{ v^k e^{- 4 \pi l^2 y^2/v}  |\phi(\tau,lz)|^2 }{ \lan U_l \phi , U_l \phi \ran}.
\end{align}

For $U_l \phi = \phi(\tau,lz)$, first we  compute $\lan U_l \phi , U_l \phi \ran $ in terms of $\lan \phi, \phi \ran$. We have an explicit formula for $U_l^*$ but not for $ U_l^* U_l$, which we would need, so we compute things directly. Write $d\mu(\tau, z)= dxdydudv/v^3$. Then $\lan U_l \phi , U_l \phi \ran$ equals
\begin{align}
     \underset{y \bmod v, x\bmod 1}{\int_{\tau \in \mc F_1}} & v^k e^{-4\pi l^2y^2/v} |\phi(\tau,lz) |^2 d\mu(\tau, z)
     = l^{-2} \underset{y \bmod v, x\bmod l}{\int_{\tau \in \mc F_1}} v^k e^{-4\pi y^2v} |\phi(\tau,z) |^2 d\mu(\tau, z) \n \\
  & = l^{-2} \sum_{j=1}^l \int_{\tau \in \mc F_1, (j-1)v \le y \le  jv, x\bmod l} v^k e^{-4\pi y^2/v} |\phi(\tau,z ) |^2 d\mu(\tau, z). \n \\
\end{align}
Making a change of variable $z\mapsto z + (j-1) \tau $, we get
\begin{align}
    &=  l^{-1} \sum_{j=1}^l \int_{\tau \in \mc F_1, 0 \le y \le  v,x\bmod 1} v^k e^{-4\pi (y + (j-1) v)^2/v} |\phi(\tau,z + (j-1) \tau ) |^2 d\mu(\tau, z) \n \\
    &=  l^{-1} \sum_{j=1}^l \int_{\tau \in \mc F_1, 0 \le y \le  v, x\bmod 1} v^k e^{-4\pi (y + (j-1) v)^2/v} |e(- (j-1)^2 \tau-2 (j-1) z)|^2 |\phi(\tau,z  ) |^2 d\mu(\tau, z) \n \\
    & = l^{-1} \cdot l \, \lan \phi, \phi \ran =  \lan \phi, \phi \ran. 
\end{align}
The same calculation holds for two forms $\phi, \phi'$. In particular, orthogonal forms go to orthogonal forms.  Also, note that $U_l$ is injective. Thus we see that $ \sup(U_l( \jk))$ equals

\begin{equation*}
\sup_{\h \times \mbb C}\sum_\phi \frac{ v^k e^{- 4 \pi l^2 y^2/v}  |\phi(\tau,lz)|^2 }{ \lan U_l \phi , U_l \phi \ran} 
    = \, \sup_{\h \times \mbb C}\sum_\phi \frac{ v^k e^{- 4 \pi  y^2/v}  |\phi(\tau,z)|^2 }{ \lan  \phi ,  \phi \ran} 
    =\sup(\jk) \asymp k^2.
\end{equation*}
Thus the size of $U_l (\jk)$ is same as that of $\jk$.

\subsubsection{The old-space $V_p(\jk)$}
Next, we consider the operators $V_m:J_{k,1}\rightarrow J_{k,m}$ defined for each $m \ge 1$ in \eqref{vmdef}. When $m=p$, a prime, the image $V_p(J_{k,1})$ is part of the index-old subspace inside $\jkp$. We will discuss the properties of this operator in more detail in section~\ref{compactregbkm}. Now we only borrow a fact from \lemref{ortholem}. Namely, $\lan V_p \phi,  V_p \phi \ran= (\lambda_f(p) + (p+1) p^{k-2} \lambda_f(1))\lan \phi, \phi \ran \gg p^{k-1} \lan \phi, \phi \ran$, and that $V_p(\phi), V_p( \psi)$ are orthogonal if $\phi,\psi$ are orthogonal (or distinct) Hecke eigenforms. Since $\jk$ admits a Hecke basis, it follows from these two facts that $V_p$ is injective (but this may not be true for non-prime indices) and thus for a Hecke basis $\{\phi\}$ of $J_{k,1}$, we can write
\begin{equation}
   \sup(V_p(J_{k,1}^{cusp})):= \sup\nolimits_{(\tau,z)\in\h\times \mbb C} \sumn_{\phi  } (\widetilde{ V_p(\phi)}/\norm{V_p \phi})^2,
\end{equation}
where, for $\phi \in \jkm$ and $m \ge 1$, we define $\widetilde{\phi}(\tau,z)^2 : =v^{k}e^{-4\pi my^2/v}|\phi(\tau,z)|^2,$
and note that it is $\Gamma^J$ invariant.
From its definition, notice that $\widetilde{V_m(\phi)}^2$ equals
\begin{align}
     & v^{k}e^{-4\pi my^2/v}|V_m(\phi)|^2
     = m^{-2} v^{k}e^{-4\pi my^2/v} \big|\sum_{ad=m}\sum_{b\mod d}a^{k}\phi(\frac{a\tau+b}{d},az)\big|^2 \label{vm-def} \\
    &= m^{-2} \big|\sum_{ad=m}\sum_{b\mod d}(a d)^{k/2} \widetilde{\phi}(\frac{a\tau+b}{d},az)\big|^2 = m^{k-2} \big|\sum_{d|m}\sum_{b\mod d} \widetilde{\phi}(\frac{a\tau+b}{d},az)\big|^2.   \n
\end{align}
Next, we can use the Cauchy-Schwartz inequality and the lower bound on $\norm{V_p(\phi)}$ to write
\begin{align*}
    \sup(V_p(J_{k,1}^{cusp})) &\ll p^{-1} \sup \nolimits_{(\tau,z)\in \mbb H \times \mbb C} \sum_{ \phi } ( \sum_{ad=p} d ) \cdot \sum_{ad=p} \sum_{b\mod d} \widetilde{\phi}\big(\frac{a\tau+b}{d},az\big)^2  \n\\
    &\ll p^{-1}(p+1) \sup \nolimits_{(\tau,z)\in \mbb H \times \mbb C}  \sum_{ad=p} \sum_{b\mod d} \sum_{\phi} \widetilde{ \phi }\big(\frac{a\tau+b}{d},az \big)^2  \n\\
    &\ll p^{-1}(p+1)^2 \sup \nolimits_{(\tau,z)\in \mbb H \times \mbb C} \sumn_\phi \widetilde{ \phi } (\tau,z)^2
    \ll  p \ k^2.
\end{align*}
Therefore this bound is better than what we would have obtained from \propref{jacobi-BK}.

Now we compute a lower bound for $\sup(V_m(J_{k,1}^{cusp}))$. For any $m \ge 1$ and any fixed $v_0 >0,y_0$, using the Fourier expansion of $V_m\phi$ from \eqref{vmfe} and $\norm{V_m\phi}^2\ll m^{k-1+\epsilon}\norm{\phi}^2$ (see section \ref{compactregbkm}), we get that

\begin{align*}
    \sup(V_m(J_{k,1}^{cusp}))&\ge \int_{u \mod 1}\int_{x\mod 1}\sum_{\phi}v_0^{k}e^{-\frac{4\pi my_0^2}{v_0}}(|V_m\phi(u+iv_0,x+iy_0)|/\norm{V_m\phi})^2du dx\\
    &\ge v_0^{k} e^{-\frac{4\pi my_0^2}{v_0}} e^{-4\pi v_0} \sum_\phi\frac{1}{\norm{V_m\phi}^2} \left| \int\int \frac{V_m\phi(u+iv_0,x+iy_0)}{ e((u+iv_0))}du dx\right|^2\\
    &=v_0^{k} e^{-\frac{4\pi my_0^2}{v_0}} e^{-4\pi v_0} \sum_\phi\frac{\big| c_{V_m\phi}(1,0)\big|^2}{\norm{V_m\phi}^2}\\ 
    &\gg m^{-k+1-\epsilon} v_0^{k} e^{-\frac{4\pi my_0^2}{v_0}} e^{-4\pi v_0}\sum_\phi \left(\big| c_{\phi}(m,0)\big|/\norm{\phi}\right)^2\\
   &= m^{-k+1-\epsilon} v_0^{k} e^{-\frac{4\pi my_0^2}{v_0}} e^{-4\pi v_0} (\lambda_{k,1,4m})^{-1}C(m,0;P_{k,1}^{m,0}).
\end{align*}
where $\lambda_{k,1,4m}\sim \Gamma(k-3/2)(4\pi m)^{-k+3/2}$ is as in \eqref{lk} if we notice that $S=1$ and $D =- 4m$ in loc. cit.
We know that (cf. \cite[Prop.~6.3]{das2010nonvanishing}) $C(t,0;P_{k,1}^{t,0})> 1/2$ for $t\ll k^{2-\epsilon}$. Thus, choosing $y_0=0$, $v_0=k/4\pi$, we conclude that for any $\epsilon>0$,
\begin{equation}
    \sup(V_m(J_{k,1}^{cusp}))\gg_\epsilon k^{2} m^{-1/2 - \epsilon}.
\end{equation}
We summarize the results of this subsection in the following proposition.
\begin{prop} \label{old-prop}
For any $l \ge 1$, we have $\sup(U_l(J_{k,1}^{cusp})) \asymp k^2 l$. For any prime $p$, $k^2 p^{-1/2 -\epsilon} \ll \sup(V_p(J_{k,1}^{cusp}))\ll k^{2} p$, the lower bound being actually valid for any $m \ge 1$. The subspace $U_l(J_{k,1}^{cusp})$ has the same size, up to absolute constants, as that of the space $\jk$.
\end{prop}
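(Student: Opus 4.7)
The proposition collects three quantitative statements on BK-sizes of distinguished subspaces of $\jkm$ at indices $m=l^2$ (for $U_l$) and $m=p$ (for $V_p$); my plan is to organize the ingredients set up earlier in the section into one short argument per part. For $U_l(\jk)$, the crucial step is the Petersson isometry $\norm{U_l\phi}^2=\norm{\phi}^2$. I would unfold the index-$l^2$ inner product, substitute $z\mapsto lz$ to convert the integration to $\tau\in\mc F$, $0\le y\le lv$, $0\le x\le l$, and tile this enlarged region by $l^2$ copies of the standard Jacobi domain. On each tile the Jacobi translation $z\mapsto z+(j-1)\tau$ returns the integrand to its original form, since $\widetilde\phi^2=v^k e^{-4\pi y^2/v}|\phi|^2$ is $\Gamma^J$-invariant. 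The same bookkeeping preserves orthogonality, so $U_l$ is injective. Substituting $z\mapsto z/l$ inside the supremum then identifies the BK of $U_l(\jk)$ with that of $\jk$, whose size is $\asymp k^2$.

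For the upper bound $\sup(V_p(\jk))\ll k^2 p$ I would expand $\widetilde{V_p\phi}^2$ using \eqref{vm-def} as $p^{k-2}$ times the squared modulus of a sum over the $p+1$ representatives of $\Gamma_1\backslash M_{2,p}(\z)$ of values of $\widetilde\phi$ at the transformed points $(\frac{a\tau+b}{d}, az)$. Applying Cauchy--Schwarz to this $(p+1)$-term sum, invoking the norm bound $\norm{V_p\phi}^2\gg p^{k-1}\norm\phi^2$ from \lemref{ortholem}, and noting that the map $(\tau,z)\mapsto(\frac{a\tau+b}{d}, az)$ is a surjection of $\h\times\mbb C$ onto itself, each coset contribution reduces to $\sup(\jk)\asymp k^2$. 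Summing over the $p+1$ cosets and tracking the prefactor $p^{-1}(p+1)^2$ delivers the claim.

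For the lower bound $\sup(V_m(\jk))\gg k^2 m^{-1/2-\epsilon}$ I would fix $\tau=u+iv_0$, $z=x+iy_0$, bound the sup from below by the integral over $u\bmod 1$, $x\bmod 1$, and thereby extract the $(1,0)$-th Fourier coefficient of $V_m\phi$. Since $\gcd(1,0,m)=1$, the divisor formula \eqref{vmfe} collapses to $c_{V_m\phi}(1,0)=c_\phi(m,0)$. The Petersson trace formula \eqref{PTF}/\eqref{PetTr} turns $\sumn_\phi |c_\phi(m,0)|^2/\norm\phi^2$ into $\lambda_{k,1,4m}^{-1}C(m,0;P_{k,1}^{m,0})$; combined with the non-vanishing $C(t,0;P_{k,1}^{t,0})>1/2$ for $t\ll k^{2-\epsilon}$ from \cite{das2010nonvanishing}, the Stirling asymptotic $\lambda_{k,1,4m}\sim\Gamma(k-3/2)(4\pi m)^{-k+3/2}$, and the choice $v_0=k/(4\pi)$, $y_0=0$, this reads off the desired bound.

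\textbf{Main obstacle.} Once one accepts the Petersson-norm comparison $\norm{V_p\phi}^2\gg p^{k-1}\norm\phi^2$ from \lemref{ortholem} (proved elsewhere) and the non-vanishing of $C(m,0;P_{k,1}^{m,0})$ in the range $m\ll k^{2-\epsilon}$ imported from \cite{das2010nonvanishing}, the three parts are formal bookkeeping. The Cauchy--Schwarz step in the $V_p$ bound is not tight, losing roughly $\sqrt p$, so the genuinely hard task would be to close the gap between the lower bound $k^2 p^{-1/2-\epsilon}$ and the upper bound $k^2 p$ -- and ideally match the conjectured size $k^2 p^{1/2}$ for the full space $\jkp$ -- which seems to demand a finer handle on cancellation among the $p+1$ coset terms.
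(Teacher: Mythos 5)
Your approach matches the paper's own proof line by line: the Petersson isometry and tiling for $U_l$, Cauchy--Schwarz over the $\sigma_1(p)=p+1$ coset representatives combined with $\|V_p\phi\|^2\gg p^{k-1}\|\phi\|^2$ for the $V_p$ upper bound, and extraction of the $(1,0)$-th Fourier coefficient plus the Petersson trace formula for the $V_m$ lower bound. The $U_l$ conclusion $\sup(U_l(\jk))\asymp k^2$ is exactly what the paper derives (and what its closing sentence "same size ... as that of the space $\jk$" asserts); the $\asymp k^2 l$ in the proposition header is inconsistent with both and looks like a slip in the paper's statement.

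There is, however, a genuine gap in your $V_m$ lower bound as written. The Bergman-kernel sum you are bounding from below is $\sum_\phi |c_{V_m\phi}(1,0)|^2/\|V_m\phi\|^2$, and you correctly identify $c_{V_m\phi}(1,0)=c_\phi(m,0)$, but then you feed $\sum_\phi |c_\phi(m,0)|^2/\|\phi\|^2$ to the Petersson trace formula -- i.e. you silently replaced $\|V_m\phi\|^2$ by $\|\phi\|^2$ in the denominator. That substitution is not free: one needs the upper bound $\|V_m\phi\|^2\ll m^{k-1+\epsilon}\|\phi\|^2$ (the opposite direction of the $\gg p^{k-1}$ inequality you cite, and stated for general $m$, not just prime), which contributes an essential factor $m^{-k+1-\epsilon}$. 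Without it, your chain of estimates outputs $\asymp k^2\, m^{k-3/2}$ at $v_0=k/4\pi$, not the claimed $k^2 m^{-1/2-\epsilon}$; the large power of $m$ from $\lambda_{k,1,4m}^{-1}\sim\Gamma(k-3/2)^{-1}(4\pi m)^{k-3/2}$ must be cancelled by the norm conversion. The paper makes exactly this step explicit before invoking the non-vanishing of $C(m,0;P^{m,0}_{k,1})$; you should do the same.
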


\subsection{Size of index-new space.} \label{victory}
We first define two classical notions of a `new-space' in the literature (which we denote by ''new1'' and ''new2'') in the context of $\jkm$, and show that they are actually equal. Apparently these two definitions are in use in \cite{man-ram}, but we couldn't find a relation between them in \cite{man-ram}.
Even though we work here only with $m$, a prime, for which subsection~\ref{barmao} would have been enough, but for future applications in mind and to fill this seeming gap in the literature, we present \lemref{jgap}. We believe that it will be useful for the community. This subsection also explains our need to write subsection~\ref{barmao} in the first place, something we believe should be communicated.

We have the following decomposition for $\jkm$ into index-new and index-old spaces according to \cite[p.~138, (4)]{skoruppa1988jacobi} (see also \cite[p.~2613, (41)]{man-ram}). The first definition states that it is just the orthogonal complement of the (explicitly defined) space of index-old forms.
\begin{equation} \label{new1def}
    \jkm= \jkmone \bigoplus^{\perp} \nolimits_{ld^2|m,ld^2>1} J_{k,m/ld^2}|U_d\circ V_l =\jkmone \bigoplus \nolimits_{(l,d)} J^{cusp, new1}_{k,m/ld^2}|U_d\circ V_l.
\end{equation} 
The second definition runs via eigen-packets of elliptic newforms. Let $\{f_i\}$ denote an orthogonal basis of eigenforms for $S_{2k-2}^{new}(p)$ and define,
\begin{equation} \label{new2def}
    \jkmn(f_i):=\text{span}\{\phi\in J_{k,m}^{cusp} \, | \,  T_J(q)\phi=\lambda_{f_i}(q)\phi, \text{ for primes } q\nmid m\},
\end{equation}
and define $\jkmtwo:= \bigoplus_{f_i} \jkmn(f_i)$ (see \cite[sec.~5.1]{man-ram}). 

\begin{lem} \label{jgap}
With the above notations, one has $\jkmone=\jkmtwo$.
\end{lem}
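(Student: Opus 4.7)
The plan is to prove both inclusions by Hecke-theoretic arguments, exploiting the Hecke equivariance of $V_p$ for primes $q \neq p$ together with strong multiplicity one. Since $m=p$ is prime, the only index-old summand in \eqref{new1def} is $V_p(\jk)$, so $\jkmone = V_p(\jk)^{\perp}$ inside $\jkp$.

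First I would show $\jkmtwo \subseteq \jkmone$. Fix $\phi \in \jkmn(f_i)$ for a newform $f_i \in B^*_{2k-2}(p)$, and let $\psi \in \jk$. Expand $\psi$ in a Hecke eigenbasis of $\jk$ whose elements $\psi_g$ correspond (via the level-one Eichler-Zagier isomorphism) to normalized newforms $g \in S_{2k-2}(\SL{2}{\z})$. It suffices to check $\phi \perp V_p(\psi_g)$. Since $V_p$ commutes with $T_J(q)$ for $q \neq p$, each $V_p(\psi_g)$ is a $T_J(q)$-eigenform with eigenvalue $\lambda_g(q)$; by strong multiplicity one on $\mathrm{GL}_2$, there exists a prime $q \neq p$ with $\lambda_{f_i}(q) \neq \lambda_g(q)$, and self-adjointness of $T_J(q)$ on $\jkp$ forces orthogonality.

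For the reverse inclusion $\jkmone \subseteq \jkmtwo$, note that the commuting family $\{T_J(q)\}_{q \nmid p}$ is self-adjoint on $\jkp$, so $\jkp$ decomposes orthogonally into joint eigenspaces. Via the level-$p$ Eichler-Zagier-type correspondence recalled in subsection~\ref{barmao}, each joint eigen-system is matched with one on $S_{2k-2}(p)$, which in turn arises either from a genuine newform $f \in B^*_{2k-2}(p)$ or from a level-$1$ newform $g$ through $p$-oldification. The former contribution lies in $\jkmtwo$ by definition. For the latter, a multiplicity-one statement for Jacobi forms of prime index shows the $g$-isotypic eigenspace in $\jkp$ is one-dimensional and spanned by $V_p(\psi_g)$, hence lies entirely inside the old subspace $V_p(\jk)$. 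Therefore $\jkpone$, being orthogonal to $V_p(\jk)$, meets only newform eigenspaces, giving $\jkpone \subseteq \jkmtwo$.

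The main technical obstacle is the multiplicity-one assertion in the last step: that a $T_J(q)$-eigenform in $\jkp$ whose eigenvalues (for all $q \neq p$) coincide with those of a level-$1$ newform $g$ must be a scalar multiple of $V_p(\psi_g)$. I expect to handle this via the Hecke-equivariant isomorphism between $\jkp$ and a suitable subspace of $S_{2k-2}(p)$ developed in subsection~\ref{barmao}, combined with the classical observation that in $\jkp$ there is only \emph{one} lift of the level-one eigenvalue system of $g$ (namely $V_p(\psi_g)$), in contrast to the two-dimensional span of $g(\tau), g(p\tau)$ inside $S_{2k-2}(p)$. This asymmetry between the Jacobi and elliptic pictures is precisely the reason a reconciliation of the two definitions is needed in the first place.
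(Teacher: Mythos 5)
Your proposal captures the right underlying mechanism — strong multiplicity one plus the Hecke-equivariance of the operators $V_\ell$, $U_d$, $T_J(q)$ — but it diverges from the paper's proof in scope and in one of the two inclusions, and the divergence is not innocuous.

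\textbf{Scope.} You specialize to $m=p$ prime from the start, so that the index-old subspace is simply $V_p(\jk)$. But the lemma is stated (deliberately, the paper says, ``for future applications in mind and to fill this seeming gap in the literature'') for \emph{all} $m \ge 1$. The paper's proof of $\jkmtwo\subseteq\jkmone$ is an induction on $m$: for a newform $f$ of level $m$, one expands $\phi\in\jkmtwo(f)$ against old components $\psi_s|U_d\circ V_l$ whose eigensystems come, by the inductive hypothesis $J^{cusp,new1}_{k,m'}=J^{cusp,new2}_{k,m'}$ for $m'<m$, from elliptic newforms of strictly smaller level $m'$, and multiplicity one for $\mrm{GL}_2$ then forces all $\psi_s=0$. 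Your version of this inclusion (orthogonality of $\phi$ to $V_p(\psi_g)$ via a prime $q$ separating $\lambda_{f_i}$ from $\lambda_g$) is correct for $m=p$, and it is essentially the base + one step of the paper's induction; but it does not establish the lemma as stated.

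\textbf{The other inclusion.} For $\jkmone\subseteq\jkmtwo$ the paper's argument is short: $\jkmone$ is stable under the Hermitian $T_J(n)$ for $(n,m)=1$, hence decomposes into joint eigenspaces, and the Skoruppa–Zagier Hecke-equivariant isomorphism $\mathscr{S}\colon\jkmone\xrightarrow{\ \sim\ }M^{-,new}_{2k-2}(m)$ immediately identifies the possible eigensystems as those of level-$m$ newforms. You instead decompose all of $\jkp$ and try to show that the eigenspaces attached to level-$1$ newforms $g$ are exactly $\mbb C\, V_p(\psi_g)$, so they sit inside the old part. This requires a multiplicity-one statement for the full $\jkp$, which you acknowledge as the main technical obstacle and propose to resolve using the Eichler–Zagier material of subsection~\ref{barmao}. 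That is problematic for two reasons: (i) subsection~\ref{barmao} occurs \emph{after} this lemma and its Eichler–Zagier map is only shown to be a Hecke-equivariant isomorphism on the \emph{newspace} $\jkpn$, which is what you are in the middle of characterizing, so the reference is circular; and (ii) what you actually need is the isomorphism $\jkmone\cong M^{-,new}_{2k-2}(m)$ (or $J^{cusp}_{k,p}\cong M^{-}_{2k-2}(p)$), which is precisely the external Skoruppa–Zagier input the paper invokes. Once you invoke it, the convoluted detour through $\jkp$ and the one-dimensionality of $g$-isotypic pieces becomes unnecessary — the isomorphism applied directly to $\jkmone$ does the job.

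In short: the $\jkmtwo\subseteq\jkmone$ direction is correct for $m=p$ but must be upgraded to an induction on $m$; and the $\jkmone\subseteq\jkmtwo$ direction needs the Skoruppa–Zagier Hecke-equivariant isomorphism of $\jkmone$ with $M^{-,new}_{2k-2}(m)$ invoked directly, rather than your proposed (circular) route through the full-space multiplicity-one claim.
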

\begin{proof}
We use the fact that the Hecke operators $T_J(n)$ for $(n,m)=1$ are Hermitian on $\jkm$ and that $T_J(n)$ commutes with $V_l,U_d$ when $(n,lmd)=1$ (cf. \cite[\S~4, Corollary~1]{ez}). By abuse of notation, we denote by the same symbol $T_J(n)$ the Hecke operators on index $m$ and for lower index forms. Moreover, $T_J(n)$ leaves $\jkmone$ invariant (\cite[p.~138]{skoruppa1988jacobi}).

We first show that $\jkmone \subseteq \jkmtwo$. Since $\jkmone$ is invariant under the Hermitian $T_J(n)$ as noted above, we can decompose it into simultaneous eigenspaces for all the $T_J(n)$. From \cite[p.~138, (ii)]{skoruppa1988jacobi} we know that there exists a Hecke equivariant isomorphism $\mathscr{S}$ between $\jkmone$ and $M_{2k-2}^{-,new}(m)$ for all $m \ge 1$. By looking at the spanning set of newforms in $\jkmone$, therefore the eigenvalues occurring in the eigenspaces under $T_J(n)$ above must come (only) from those of $M_{2k-2}^{-,new}$ and we are done since this is precisely the definition of $\jkmtwo$.

Next we show $\jkmtwo \subseteq \jkmone$ by induction on $m$. This is clear when $m=1$. In fact, we already have equality  by \cite[Thm.~5.4]{ez} and standard results on the Shimura correspondence (cf. e.g., \cite[\S~5]{ez}). Next, assume that $J^{cusp,new2}_{k,m'}\subseteq J^{cusp,new1}_{k,m'}$ for any $m'<m$. Since $\jkmone \subseteq \jkmtwo$ for any $m$, we have  $J^{cusp,new2}_{k,m'}= J^{cusp,new1}_{k,m'}$ as our induction hypothesis.

Now consider a newform $f \in S_{2k-2}^{new}(m)$. Write $0 \neq \phi \in \jkmtwo(f)$ as (with the tuple $(l,d)$ as mentioned above)
\begin{align}
\phi = \sumn_t \phi_t + \sumn_{s=(l,d)} \psi_s |U_d\circ V_l,
\end{align}
where $\phi_t$ and $\psi_s$ run through a basis consisting of simultaneous eigenfunctions of $T_J(n)$, for all $n$ such that $(n,m)=1$ on $\jkmone$ and $J^{cusp,new1}_{k,m'}$ respectively, where $m'=m/ld^2$ as $(l,d) $ varies over as in \eqref{new1def}. If we apply $T_J(n)$ on both sides and equate coefficients, we would get for all $(n,m)=1$,
\begin{align} \label{p=1}
    \lambda_{f}(n) = \lambda_t(n) = \lambda_s(n)
\end{align}
for all $s,t$ for which $\psi_s, \phi_t$ are non-zero. Here obviously the quantities $\lambda_*(n)$ denote the eigenvalue of the relevant objects under $T_J(n)$.
But then, by induction hypothesis, $\lambda_s(n)$ is the eigenvalue of a level $m'$ ($<m$) elliptic newform (here we use multiplicity-one to get a unique such newform) so \eqref{p=1} is not possible again by multiplicity-one. Therefore, all the $\psi_s$ must be $0$ and $\jkmtwo \subseteq \jkmone $. 

Thus we have $\jkmone=\jkmtwo$ and  we call either of the spaces as $\jkmn$.
\end{proof}
For a fixed $i$, let $\{\phi_{ij}\}_j$ be an orthogonal basis for $\jkmn(f_i)$. In general, note that the multiplicity in the space of newforms in the case of Jacobi forms might be greater than one—as is the case sometimes with half-integral weight forms of level $4N$ with $N$ not square-free, and in many cases the aforementioned newspaces are isomorphic as Hecke modules. Then we define our object of interest in this subsection—the contribution of the newspace towards the size of the BK of $\jkmn$ -- by
\begin{equation}
    B_{k,m}^{new}(\tau,z;\tau_0,z_0):=\sumn_{i,j}\frac{\phi_{ij}(\tau,z)\overline{\phi_{ij}(\tau_0,z_0)}}{\lan \phi_{ij},\phi_{ij}\ran}.
\end{equation}

For any negative fundamental discriminant $D$ with $(D,m)=1$ and $D\equiv r^2\mod 4m$, we have a Waldspurger-type formula (cf. \cite{man-ram} \footnote{ There seems to be an extraneous index factor $i_{mM}$ in \cite[(29)]{man-ram}. The factor $R^2_D$ is also missing. }, cf. \eqref{eq:waldJacfin} for $m$ prime, probably \eqref{wald-p1} is true for any $m \ge 1$, but have not verified this. We will only use $m$ to be a prime.)
\begin{equation} \label{wald-p1}
    \sumn_{j}\frac{|c_{\phi_{ij}}(|D|)|^2}{\lan \phi_{ij},\phi_{ij}\ran} =\frac{\Gamma(k-1)|D|^{k-3/2}}{   R_D^2   2^{2k-3}\pi^{k-1}m^{k-2}} \frac{L(k-1,f_i\otimes \chi_D)}{ \lan f_i , f_i \ran}.
\end{equation}
Here $R_D=\# \{ t \bmod{2m} \mid  t^2 \equiv D \bmod{4m} \}$. E.g., when $m$ is a prime, $R_D = 1$ or $2$ depending on whether $D  \equiv 0,m^2 \bmod{4m}$ or otherwise.
If we choose $(n,r)$ such that $D=r^2-4mn$ is a fundamental discriminant with $(D,m)=1$, we quote from \cite[Thm.~5.7]{man-ram} that (by an argument verbatim same as in section \ref{seclbd}) 
\begin{align}
    \sup(J_{k,m}^{cusp,new})
    \gg  v_0^k e^{-4\pi m y^2/v} e^{-4\pi \frac{r^2-D } {4m} v_0}  \frac{ |D|^{k-3/2} \Gamma(k-1) }{(4\pi)^{k} m^{k-2}}\sumn_{f_i} \frac{L(k-1,f_i\otimes \chi_D)}{  \lan f_i , f_i \ran}.
\end{align}
Choosing $v_0=\frac{k}{4\pi n}$, $y=0$, we see that with $(D,m)=1$,
\begin{align}\label{eq:lowLval}
   \sup(J_{k,m}^{cusp,new})\gg \frac{|D|^{k-3/2} \Gamma(k-1) }{(4\pi)^{2k} m^{k-2} } \cdot \left(\frac{k}{e}\right)^k \sumn_{f_i \in S^{-,new}_{2k-2}(m) } \frac{L(k-1,f_i\otimes \chi_D)}{  \lan f_i , f_i \ran}.
\end{align}

\textit{From now on we will assume that $m =p$, a prime.}
On the one hand we have to maximize the lower bound and so should choose $D$ as large as possible in \eqref{eq:lowLval}. On the other hand, choosing $D$ very large compared to $p$ would destroy the asymptotic formula of the average of $L$-values in \eqref{eq:lowLval}. Moreover, the analytic conductor of the modular forms $f_i \otimes \chi_D$ is $k^2D^2p$ if $(D,p)=1$ whereas it is $k^2p^2$ if, say $D=-4p$. The former choice of larger conductor leads to some difficulty with the asymptotic analysis, see below.
So choosing $D=-4p$ seems to be a good option. This is also supported by the fact that we can always arrange $(n,r)$ such that $0<|r^2-4np| \le 4p$ by the division algorithm, so $D=-4p$ maximizes our choice from this point of view as well. 

However, there is a technical problem now – the Waldspurger-type formula as in \eqref{wald-p1} is not written down for index $p$ Jacobi forms when $p|D$. Fortunately, it is possible to put together various resources to overcome this, which we do below. The final result is that \eqref{eq:lowLval} holds true when $p|D$ as well. We will choose $D=-4p$, but have to distinguish between the cases $p \equiv1,3 \bmod{4}$.
This leads to the final lower bound, which is discussed in subsections~\ref{1mod4} and \ref{3mod4} respectively.

\subsection{Detour via Eichler-Zagier map, Baruch-Mao's formula when \texorpdfstring{$p|D$}{p|D}} \label{barmao}
Let $k$ be even. We use the following results.

(i) Let us put $S_{k-1/2}^{+,p^{\pm}}(4p):=\{g=\sum a_g(n)q^n \in S_{k-1/2}^{+}(4p) \colon a_g(n)=0 \text{ for } \left(\frac{-n}{p}\right)= \mp 1\}$.
Further, put $S_{k-1/2}^{+,p^\pm, new}(4p):= S_{k-1/2}^{+,p^\pm}(4p)\cap S_{k-1/2}^{+, new}(4p)$.
The generalized Eichler-Zagier (EZ) map $\mf Z_p \colon \jkp \rightarrow S_{k-1/2}^{+,p^+}(4p)$ is defined by means of the Fourier expansion as
\begin{align}
     \sum_{D,r}c_\phi(D,r)e(\frac{r^2-D}{4p}\tau + rz) \mapsto \sum_{D<0} \big( \sum_{r \bmod{2p} , r^2 \equiv D \bmod{4p}} c_\phi(D,r) \big) e(|D|\tau)
\end{align}
Note that $c_\phi(D,r)$ does not depend on $r$ as $\phi \in \jkpn$ (cf. \cite[Cor.~5.3]{man-ram}). Thus when restricted to the newspace,
$\mf Z_p \colon \jkpn \to S_{k-1/2}^{+,p^+,new}(4p)$ becomes a Hecke equivariant isomorphism (cf. \cite[Theorem~5.6]{ez},
\cite[Theorem~5.4]{man-ram}) and reads as\footnote{We point out an incorrectly stated definition given in \cite[Theorem~5.4]{man-ram} wherein the quantities $R_D$ were omitted.}
\begin{equation} \label{ezmap}
    \sumn_{D,r}c_\phi(D,r)e(\frac{r^2-D}{4p}\tau + rz) \mapsto \sumn_{D} R_Dc_\phi(D)e(|D|\tau),
\end{equation}
where the pair $(D,r)$ runs over integers such that $D<0$ and $D \equiv r^2 \bmod{4p}$ and in the second sum $D<0$, $D \equiv \square \bmod{4p}$ and $R_D$ is as before.

This implies that a strong multiplicity-one result holds for both of these spaces in view of (another) Hecke-equivariant isomorphism (via the trace formula) between $\jkmn$ and a $S^{new,-}_{2k-2}(m)$ (cf. \cite{sko-zag}, the minus refers to the root number being $-1$) for any $m \ge 1$.

(ii) Further from Kohnen's work \cite[Sec.~5, Theorem~2~(iii)]{ko1} the spaces $S_{k-1/2}^{+,p^\pm,new}(4p)$ (which can also be described as the $+1$ eigenspace under a suitable involution) and $S^{p^\pm,new}_{2k-2}(p)$ (consisting of the $+1$ eigenspace of the Fricke involution) are isomorphic as Hecke modules (under some linear combination of Shimura maps, say $\mathscr{S}$). In particular, multiplicity-one holds for these spaces.

(iii) (Baruch-Mao) We only invoke the details pertinent to our case at hand, for more details the reader is referred to \cite[Theorem~1.1]{bar-mao} and the example following it. We take some care in presenting the argument, as it is not very commonly available in the setting we need.

Let $g \in S_{k-1/2}^{+,p^+,new}(4p)$, and let $\mathscr S_D$ be the $D$-th Shimura map (see e.g., \cite{ko1}), one for each fundamental discriminant $D$, including $D=1$. If we put $f= \mathscr{S}_1(g)$, then by multiplicity-one on $S^{new}_{2k-2}(p) $ and the commutation of the maps $\mathscr{S}_D$ with all Hecke operators, one gets that $\mathscr{S}_D (g)=a_f(|D|) f$ for all $D$; and moreover under the isomorphism mentioned in (ii) above, $g$ corresponds to $f$ since the image of $g$ under the linear combination of Shimura maps leading to an isomorphism--as mentioned above--is still a constant multiple of $f$, if we re-scale this map. With this convention, the result of Baruch-Mao (which removes the assumption $(p,D)=1$ from \cite{ko2}) states that for all $D$ with $(-1)^{k-1}D>0$ (i.e., $D<0$ in our case)
\begin{equation}\label{allFD}
    \frac{|c_g(|D|)|^2}{\lan g, g\ran} = \frac{2 \Gamma(k-1)|D|^{k-3/2}}{3 \pi^{k-1}} \frac{L(k-1, f\otimes \chi_D)}{ \lan f , f \ran}. 
\end{equation}
The factor $3= [\Gamma_0(4p) \colon \Gamma_0(p)]$ comes because our Petersson norms are not normalised by volume.
The reader might notice that we have not mentioned any local conditions, viz. $(D/p)=\eta_p(f)$ because certainly the result holds for these discriminants (even when $p|D$) and it is also known that both sides of \eqref{allFD} are $0$ when $(D/p)=-\eta_p(f)$, see \cite[Remark after Corollary~1]{ko2} and the second part of the example following \cite[Theorem~1.1]{bar-mao}. Here we use that $D<0$ and $k$ is even. Moreover, $\eta_p(f)$ is the eigenvalue of the Fricke involution at $p$.

\subsection{Relation between Petersson norms} \label{pet-relns}
Let $g\in  S_{k-1/2}^{+,p, new}(4p)$ be the image of $\phi \in \jkpn$: $g= \mf Z_p(\phi)$ as in \eqref{ezmap}. Put $f = \mathscr{S}_1(g) \in S^{p^+,new}_{2k-2}(p)$. Then $g,f$ are uniquely determined, and  $R_D c_\phi(D) = c_g(|D|)$. Thus \eqref{allFD} holds with $c_g(|D|)$ replaced by $R_Dc_\phi(|D|)$. The only thing that is missing now is a relation between $\lan \phi, \phi \ran$ and $\lan g, g \ran $. This deep result is available from the thesis of Skoruppa \cite[Satz 4.1]{skoruppa}, but perhaps is not well-known, so we give a brief account by translating it to our situation. Let us first state the result and then explain the notation. In the passing, note that it is not enough to quote a similar inner product relation from \cite{br-ag} since they deal with index $1$ Jacobi forms of level $p$, whereas we have index $p$ to start with. Also it seems difficult to generalize the method in loc. cit. via residues and Rankin-Selberg method to our situation because of complicated congruence relations on the Fourier expansions. We therefore take the path via Eichler-Zagier maps.

Namely, in the notation of \cite[p.~94,~(4)]{skoruppa}, if $\phi,\psi \in J^{1,f}_{k,m}$ (a certain subspace of $J_{k,m}$, see below) and $\chi \bmod F$ is a Dirichlet character, then
\begin{align}\label{eq:petrelsk}
    \lan \phi, \psi \ran =  [\SL{2}{\z} \colon \Gamma_0(4m)]^{-1} \, c^\chi_{k,m} \lan \mf Z^\chi_{k,m} (\phi), \mf Z^\chi_{k,m} (\psi) \ran  . 
\end{align}
Here  $c^\chi_{k,m} = (4m)^{k-1} \omega^f_m(1)\cdot$ $|\{\rho\mod 2m\ | \ (\rho, FQ) = 1\} |^{-1}$, where $F|2m$ and $Q$ is the largest square dividing $m$ and $\mf Z^\chi_{k,m}$ is the generalized Eichler-Zagier map. For the constant $\omega^f_m(1)$, when $m=p$ a prime, see below.

Now note that for us the Dirichlet character $\chi \bmod F$ is $1$, $F=1$, $m=p$ (and $Q=1$ in loc. cit.). $J_{k,p}$ (more generally for any index $m$) can be decomposed as a direct sum of certain canonical subspaces $J^{1,f}_{k,p}$ ($f|p$, $f$ square-free). Now from \cite[Satz 2.3]{skoruppa}
it follows that (since $k$ is even) $J_{k,p}=J_{k,p}^{1,1}$ and the other component is $0$. Thus for any $\phi,\psi \in J_{k,p}$, from \eqref{eq:petrelsk} we get the following identity relating the Petersson norms if we note that the quantity $\omega^f_p(1)$ occurring in the definition of $c^1_{k,m}$ equals $p+1$ (cf. \cite[p.~23,~(13) and p.~39,~(29)]{skoruppa}). Note also that the Petersson norms in \cite{skoruppa} are normalized by volume, whereas ours are not. The factor $p+1$ above thus cancels out and we get:
\begin{equation} \label{phig}
   \lan \phi, \phi \ran = 3^{-1} \cdot 2^{2k-2} p^{k-2} \lan g, g\ran. 
\end{equation}
In conclusion, we can write the following Waldspurger's formula for any fundamental discriminant $D<0$ and any $\phi \in \jkpn$, with $f,g$ as in this subsection.

\begin{equation}\label{eq:waldJacfin}
    \frac{|c_\phi(|D|)|^2}{\lan \phi, \phi\ran} = \frac{ \Gamma(k-1)|D|^{k-3/2}}{   R_D^2  2^{2k-3}\pi^{k-1} p^{k-2}} \frac{L(k-1, f\otimes \chi_D)}{ \lan f , f \ran}.
\end{equation}
Thus summing \eqref{eq:waldJacfin} over the corresponding (isomorphic) newspaces, \eqref{eq:lowLval} holds for all fundamental discriminants $D<0$. For the lower bound of $\sup(J_{k,p}^{cusp,new}$) we will choose $D=-4p$ but for the upper bound we need \eqref{eq:waldJacfin} for all $D<0$. This leads us to study an asymptotic formula of the averages of central $L$-values, which is the content of the next section.

\subsection{Back to the size of the new-space} \label{new-concl}
Recall the following relation between $\lan f, f\ran$ and $L(1,\mrm{sym}^2)$.
If $f\in S_{2k-2}(p)$ be a newform, then we have (cf. \cite[(2.36),(3.14)]{ILS})
\begin{equation} \label{pet-sym}
12  \zeta(2) \lan f,f\ran =  \Gamma(2k-2) p (4 \pi)^{3-2k}   L(1, \mrm{sym}^2 f).
\end{equation}
Moreover the estimate in \eqref{eq:lowLval} is now valid for $p|D$ also. We now distinguish two cases.

\subsubsection{$p \equiv 1 \bmod 4$:} \label{1mod4}
We now choose $D=-4p$ when $p \equiv 1 \bmod 4$ so that $D$ is a fundamental discriminant. Note that in our notation, $S^{p^+,new}_{2k-2}(p) = S^{new,-}_{2k-2}(p)$ when $k$ is even.
With these at hand, we now invoke \thmref{blo-p} and \corref{*-}, use the conversion in \eqref{pet-sym} (see \eqref{eq:asymnormp|D} for the asymptotic formula) and substitute back in \eqref{eq:lowLval} to see that
\begin{align} \label{p1lbd}
   \sup(J_{k,p}^{cusp,new})\gg \frac{(4p)^{k-3/2} \Gamma(k-1) }{(4\pi)^{2k} p^{k-2} }\cdot \left(\frac{k}{e}\right)^k  \frac{(4\pi)^{2k-3}}{\Gamma(2k-3)} \gg
   k^2 p^{1/2}.
\end{align}

\subsubsection{$p \equiv 3 \bmod 4$:} \label{3mod4}
Recall that $D<0$ and $D \equiv \square \bmod{4p}$.
When $p \equiv 3 \bmod 4$ we still take $D=-4p$.
In this case $D$ is not fundamental, but recall that
for any discriminant $D$, if we uniquely write $D= n^2 D_0$, where $D_0$ is a fundamental discriminant, then for any newform $f$ under our consideration, we have (cf. e.g., \cite[p.~241,~(11)]{ko2})
\begin{equation}\label{nonfund}
    c_g(|D|)= c_g(|D_0|)\sumn_{d|n, (d,p)=1}\mu(d) \left(\frac{D}{d}\right) d^{k-1} \lambda_f(n/d).
\end{equation}
For us $D_0=-p$ and $n=2$. It is easy to see that only $d=1$ survives in the sum over $d|n$ above, and the sum equals $\lambda_f(4)$.
Thus here we would require an asymptotic formula for
\begin{align}
    \sumn_f  \lambda_f(4) \, L(1/2, f \otimes \chi_{-p}) \lan f,f \ran^{-1}
\end{align}
with the extra eigenvalue included. The result is given below, whose proof is however deferred to until the end of the next section, for better exposition. See subsection~\ref{moretwist-proof} and \propref{moretwist}. In particular, the same lower bound as in \eqref{p1lbd} holds true for $p \equiv 3 \bmod{4}$ as well.

\subsubsection{Bounding the size of newspace via central $L$-values}
For any discriminant $D$, from \eqref{nonfund}, we have
    $|c_g(|D|)|^2\ll n^{2k-3+\epsilon}|c_g(|D_0|)|^2$.
Now using the Waldspurger/Baruch-Mao's formula for $|c_g(|D_0|)|^2$, we get the following bound on the size of index-newspace which can be treated as in section~\ref{jacobi-supnorm}. We believe that the bound below could be useful for future investigations.

\begin{align}
    \sup(J_{k,p}^{cusp,new})\ll& v^{k}e^{-4\pi py^2/v}\frac{\Gamma(k-1)}{2^{2k-3} \pi^{k-1}p^{k-2}} \Big(  \sum_{D,r, D=n^2D_0} D^{\frac{k}{2}-\frac{3}{4}+\epsilon} \big( \sum_{f}  \frac{L(k-1, f\otimes \chi_{D_0})}{ \lan f , f \ran} \big)^{\frac{1}{2}} \n\\ 
    & \times\exp\left(-\frac{\pi v}{2m}D\right)\exp(-2\pi(ry+\frac{r^2v}{4m}))\Big)^2.
\end{align}
This relation produces the conjectured upper bound for the new-space in certain regions (viz. $v\ll k$ and $v\gg kp^{1+\epsilon}$). But we would still require the arguments from the geometric side of the BK to obtain the conjectured bounds in the remaining regions, which at the moment gives the same bound $k^2p^2$. Thus we do not carry these out and only mention the global upper bound.
For convenience, we state the results on the new-space as follows.
\begin{prop} \label{new-prop}
For $k$ even we have $k^2 p^{1/2} \ll \sup(\jkpn) \ll k^2p^2 $. Assuming \conjectureRef{jacobi-conj}, one has $\sup(\jkpn) \asymp \sup(\jkp)$.
\end{prop}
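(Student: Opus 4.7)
The lower bound $k^2 p^{1/2} \ll \sup(\jkpn)$ assembles the machinery developed in section~\ref{victory}. I would begin from the Waldspurger-type identity \eqref{eq:waldJacfin} (now valid even when $p \mid D$ via the Baruch--Mao route of subsection~\ref{barmao}) and choose $(n,r)$ with $D = r^2 - 4 p n = -4p$, which is possible by the division algorithm. Dropping every term in the single-Fourier-coefficient lower bound \eqref{eq:lowLval} except one, and taking $v_0 = k/(4\pi n)$, $y=0$ together with Stirling, reduces the task to an asymptotic lower bound for a Petersson-weighted central $L$-value sum over newforms in $S^{new}_{2k-2}(p)$. When $p \equiv 1 \pmod 4$ the discriminant $-4p$ is fundamental, and \thmref{blo-p}, \corref{*-}, combined with the conversion \eqref{pet-sym}, deliver $k^2 p^{1/2}$ exactly as already worked out in \eqref{p1lbd}. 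When $p \equiv 3 \pmod 4$ one has $-4p = 2^2\cdot(-p)$ with $-p$ fundamental; the non-fundamental formula \eqref{nonfund} reduces the half-integral weight coefficient to the fundamental case at the cost of an extra factor $\lambda_f(4)$, and the resulting twisted first-moment asymptotic is precisely what \propref{moretwist} (whose proof is deferred to subsection~\ref{moretwist-proof}) supplies, again yielding $k^2 p^{1/2}$.

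For the upper bound $\sup(\jkpn) \ll k^2 p^2$, I would rely only on monotonicity of the Bergman kernel. Extending any orthonormal basis of $\jkpn$ to an orthonormal basis of the ambient $\jkp$ shows pointwise dominance of the kernels (the contribution of the index-old basis elements is non-negative), hence $\sup(\jkpn) \le \sup(\jkp)$. Then \propref{jacobi-BK} specialised to $g=1$ and $\det(2S) = 2p$ bounds the right side by $k^{2+\epsilon} p^{2+\epsilon}$, which I would absorb into $k^2 p^2$. I would not attempt to push through the central $L$-value bound in the displayed inequality preceding the statement, since the excerpt already notes that it only handles the regions $v \ll k$ and $v \gg k p^{1+\epsilon}$ and gives the same $k^2 p^2$ globally.

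For the conditional assertion, \conjectureRef{jacobi-conj} specialised to $n = g = 1$, $\det(2S) = 2p$ predicts $\sup(\jkp) \asymp k^2 p^{1/2}$; combining this with $\sup(\jkpn) \le \sup(\jkp)$ from the previous paragraph and the unconditional lower bound $\sup(\jkpn) \gg k^2 p^{1/2}$ from the first paragraph forces both suprema to lie in $\asymp k^2 p^{1/2}$, proving $\sup(\jkpn) \asymp \sup(\jkp)$. The principal obstacle in the unconditional chain is the $p \equiv 3 \pmod 4$ case of the lower bound: \thmref{blo-p} cannot be invoked verbatim because the Hecke eigenvalue $\lambda_f(4)$ perturbs the diagonal contribution in the approximate-functional-equation plus Petersson-trace analysis, so that analysis has to be redone with an extra divisor sum on top before the main term survives. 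A second, genuinely separate difficulty is the index-aspect gap between $p^{1/2}$ and $p^2$ in \propref{jacobi-BK}; closing it would be needed to remove the conjecture from the final $\asymp$ statement entirely.
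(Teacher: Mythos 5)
Your argument matches the paper's in every essential respect: the lower bound via the Waldspurger/Baruch--Mao identity \eqref{eq:waldJacfin} with the choice $D=-4p$ (split by $p \bmod 4$, using \thmref{blo-p} directly when $p\equiv 1 \pmod 4$ and \propref{moretwist} to handle the $\lambda_f(4)$ factor coming from \eqref{nonfund} when $p\equiv 3 \pmod 4$), the upper bound by pointwise domination of the newspace kernel by the full Bergman kernel together with \propref{jacobi-BK}, and the conditional equivalence by sandwiching between the unconditional lower bound and $\sup(\jkpn)\le\sup(\jkp)\asymp k^2 p^{1/2}$ under \conjectureRef{jacobi-conj}. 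The only (immaterial) wrinkle is that \propref{jacobi-BK} delivers $k^{2+\epsilon}p^{2+\epsilon}$ rather than $k^2p^2$ literally, so the stated absorption is not quite a valid step on its own terms --- but the paper's statement of \propref{new-prop} elides the same $\epsilon$, so this is a shared notational liberty rather than a gap.
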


\section{A first moment of central \texorpdfstring{$L$}{L}-values} \label{l1/2}
In this section we prove \thmref{blo-p}. We first discuss the root numbers of twists  and then appeal to a variant of the Petersson formula on the space of newforms.
Let us note that our formula (see \thmref{blo-p}) is uniform with respect to the parameters $D,p,k$, and this is important for the sup-norm problem. We have restricted to level $p$ because the part of the argument from the Jacobi forms side seems reasonably tractable only in index $p$. For higher indices (cf. e.g., \eqref{phig}) much more complicated scenario ensues.
Perhaps with more care, one would be able to generalise \thmref{blo-p} to any square-free level.

%We adopt the notation in \cite{ILS}. Let $B_{2k-2}^*(p)$ denote a basis of newforms for $S_{2k-2}^{new}(p)$. For $\sigma=\pm$, we denote by $B_{2k-2} ^\sigma(p)$ the subsets of $B_{2k-2}^*(p)$ consisting of newforms with $\pm 1$ eigenvalues under the Fricke involution $W_p$, respectively. From the previous section, it is clear that we have to consider $B_{2k-2} ^+(p)$ (equivalently with root number $-1$) -- but for completeness, we treat all the $\sigma$.

\subsection{Root number} For a newform $f$ of weight $2k-2$ and level $p$, the root number of $f$ is given by $\epsilon_f = \eta_p(f) (-1)^{k-1}$ where $\eta_p(f)$ denotes the Atkin-Lehner eigenvalue of $f$ at $p$ (see \cite[Section 5.11]{iwaniec2004analytic}). Moreover, one has the relation (see e.g., \cite[eq.~(2.23)]{ILS}) $\eta_p(f)= -\lambda_f(p) p^{1/2}$.
The root number $\epsilon_{f \otimes \chi_D}$ of the L-function attached to $f \otimes \chi_D$ can then be calculated as (see \cite[Section 8.1]{petrow2019generalized}): 
\begin{equation}
    \epsilon_{f \otimes \chi_D}= \chi_D(-r) \mu(q') q'^{1/2} \lambda_f(q') \epsilon_f.
\end{equation}
Here we write $p=rq'$ with $(r,D)=1$ and $q'|\mrm{rad}(D)$, with $\mrm{rad}(D)$ being the product of the primes dividing $D$. Therefore, when $(p,D)=1$, we have 
\begin{equation}\label{pDcoprime}
    \epsilon_{f \otimes \chi_D}= \chi_D(-p) \epsilon_f = \sgn(D) \chi_D(p) \epsilon_f = -(-1)^{k-1} \sgn(D) \chi_D(p) \lambda_f(p) p^{1/2};
\end{equation} 
and when $p|D$, 
\begin{equation}
    \epsilon_{f \otimes \chi_D}= -\chi_D(-1) \lambda_f(p) p^{1/2} \epsilon_f =  \sgn(D) \eta_p(f) \epsilon(f) =(-1)^{k-1} \sgn(D) .
\end{equation}

In the case of our interest, i.e., $k$ even and $D<0$ with $D\equiv \square\mod 4p$, we have $\chi_D(-1)=-1, \chi_D(p)=1$ and $\epsilon_f= \lambda_f(p) p^{1/2}$. Thus for the sup-norm problem we would be interested in,
\begin{equation}\label{eq:rootno}
    \epsilon_{f \otimes \chi_D}=
    \begin{cases}
    -\lambda_f(p) p^{1/2} & \text{ if } (p,D)=1 \text{ and } D\equiv \square\mod 4p;\\
    1 & \text { if } p|D.
    \end{cases}
\end{equation}

\begin{rmk}
For $f \in B^*_{2k-2}(p)$, the twisted central $L$-value
$L(1/2, f\otimes \chi_D)=0$ (for sign reasons) in the following cases:
\begin{enumerate}
\item $(-1)^{k-1} \sgn(D)=-1$ such that $p|D$,
\item $\sgn(D) \chi_D(p) \epsilon_f=-1$ and $(p,D)=1$.
\end{enumerate}
In the cases where the root number does not enforce vanishing of all the central $L$-values, our asymptotic formula holds true.
\end{rmk}
\begin{cor} \label{*-}
 The sums $\sum_{f\in B_{2k-2} ^*(p)}$ in \eqref{blo1}, \eqref{blo2} of \thmref{blo-p} are actually supported on the set $B_{2k-2}^{-}(p) \subset B_{2k-2}^{*}(p)$ consisting of newforms with root number $-1$.
\end{cor}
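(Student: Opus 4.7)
The plan is to extract the claim directly from the root-number formulas worked out at the start of this section together with the functional equation for $L(s, f \otimes \chi_D)$, which forces $L(1/2, f \otimes \chi_D) = 0$ whenever $\epsilon_{f \otimes \chi_D} = -1$. The only nontrivial input beyond this will be tracking what the hypotheses of each part of \thmref{blo-p} say about $\sgn(D)$ and $\chi_D(p)$ in the specific Jacobi-form setup driving the application.

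For case \eqref{blo1}, namely $(p, D) = 1$, I would combine \eqref{pDcoprime} with the side conditions imposed by the Jacobi-form application (namely $D < 0$ and $D \equiv \square \bmod 4p$, equivalently $\sgn(D) = -1$ and $\chi_D(p) = +1$); these give $\epsilon_{f \otimes \chi_D} = -\epsilon_f$. The functional equation then kills the central value for every $f \in B^+_{2k-2}(p)$, so the sum collapses to $f \in B^-_{2k-2}(p)$.

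For case \eqref{blo2} with $p \mid D$, the root number $\epsilon_{f \otimes \chi_D} = \sgn(D)(-1)^{k-1} = +1$ is independent of $f$ by the standing hypothesis, so the functional equation alone is not informative. Here the plan is to appeal to the Shimura--Kohnen--Baruch-Mao dictionary recalled in subsection~\ref{barmao}: the Waldspurger identity \eqref{allFD} packages $L(1/2, f \otimes \chi_D)$ through Fourier coefficients of a half-integral weight form $g \in S^{+, p^+, \mathrm{new}}_{k-1/2}(4p)$, and the Hecke-equivariant Shimura chain identifies this plus-space with the Fricke $+1$ eigenspace of $S^{\mathrm{new}}_{2k-2}(p)$. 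For even $k$ one has $\epsilon_f = \eta_p(f)(-1)^{k-1} = -\eta_p(f)$, so this Fricke $+1$ eigenspace is exactly $B^-_{2k-2}(p)$, and the contribution of $B^+$ to the sum drops out.

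The main obstacle is case \eqref{blo2}: the vanishing on $B^+$ is not a clean functional-equation statement but requires tracking the Atkin--Lehner matching through the Eichler--Zagier--Kohnen--Shimura correspondence, which is precisely why subsection~\ref{barmao} had to set up the $p \mid D$ version of Baruch-Mao and the attendant Petersson-norm comparison \eqref{phig} in such detail. With those in hand, however, the corollary follows by inspection of the eigenspaces appearing on both sides of the identities underlying \thmref{blo-p}.
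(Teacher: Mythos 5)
Your handling of case \eqref{blo1} is correct and coincides with the paper's: under the standing constraints $D<0$, $D\equiv\square\bmod 4p$, $k$ even, one has $\sgn D=-1$ and $\chi_D(p)=+1$, so \eqref{pDcoprime} gives $\epsilon_{f\otimes\chi_D}=-\epsilon_f$, and the functional equation kills $L(1/2,f\otimes\chi_D)$ precisely for $\epsilon_f=+1$. Nothing more is needed there.

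For case \eqref{blo2} you make a genuinely sharper observation than the paper's one-line justification: since $\epsilon_{f\otimes\chi_D}=(-1)^{k-1}\sgn D=+1$ is \emph{constant} over $B^*_{2k-2}(p)$ when $p\mid D$, the functional equation cannot force any term to vanish, so the paper's stated reason ("the $L$-values vanish if the root number of $f$ is $+1$, given the other conditions") does not, on its face, apply to \eqref{blo2}. You are right to flag this; it is the paper's own justification that is too terse here.

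However, the fix you propose does not close the gap. The Eichler--Zagier--Kohnen--Baruch--Mao dictionary and the identity \eqref{allFD} are statements \emph{about} $f\in B^-_{2k-2}(p)$ and its correspondent $g\in S^{+,p^+,\mathrm{new}}_{k-1/2}(4p)$; they do not assert that $L(1/2, f'\otimes\chi_D)=0$ for $f'\in B^+$. Indeed $f'\in B^+$ has its own Shimura correspondent $g'\in S^{+,p^-,\mathrm{new}}_{k-1/2}(4p)$ and its own Baruch--Mao relation $|c_{g'}(|D|)|^2\propto L(1/2,f'\otimes\chi_D)$, and the plus-space support constraint $a_{g'}(n)=0$ for $\left(\frac{-n}{p}\right)=+1$ is \emph{vacuous} at $n=|D|$ when $p\mid D$ (since $\left(\frac{-|D|}{p}\right)=0$). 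So "the chain identifies the $p^+$ plus space with $B^-$" does not make the $B^+$ terms in the sum drop out — those terms are simply not seen by the chain at all, and no vanishing follows. If the corollary is in fact true for $p\mid D$, the mechanism must be a local one at the ramified prime $p$ (for instance a Tunnell--Saito/epsilon-dichotomy obstruction for $\pi_p$ when $\Q_p(\sqrt{D})/\Q_p$ is ramified, or a constraint on the local metaplectic Whittaker model), and neither you nor the paper supplies such an argument; the paper's global root-number computation \eqref{eq:rootno} in fact shows directly that the sign of the functional equation provides no information in this case.
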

This is clear from the above discussion – the $L$-values $L(1/2, f\otimes \chi_D)$ vanish if the root number of $f$ is $+1$, given the other conditions.

\subsection{Proof of \thmref{blo-p}}
For better understanding, the proof is subdivided into various subsections below.

From the relation \eqref{pet-sym}, it is enough to get an asymptotic for $\sum_{f_i} L(1/2, f_i\otimes \chi_D)  \lan f_i,f_i\ran^{-1} $. Using the approximate functional equation for $L(1/2, f_i\otimes \chi_D)$, we see that $\sum_{f_i} L(1/2, f_i\otimes \chi_D)  \lan f_i,f_i\ran^{-1} $
\begin{equation} \label{levelp-approx}
 = \sum_{n} \frac{\chi_D(n)}{ n^{1/2} } V(n/q^{1/2})\sum_{f_i}\frac{\lambda_{f_i}(n)}{ \lan f_i,f_i\ran } + \sum_{n} \frac{ \chi_D(n) }{ n^{1/2} } V(n/q^{1/2})\sum_{f_i} \epsilon_{f_i \otimes \chi_D} \frac{\overline{\lambda_{f_i}(n)}}{ \lan f_i,f_i\ran },
\end{equation}
where the level $q$ of the twists satisfy $q= pD^2$ if $(p,D)=1$ and $q=D^2$ if $p|D$ (see e.g., \cite[p.~311, para~3]{petrow2019generalized}). We denote by $\mf q:=k^2q$, the analytic conductor of $L(s, f\otimes \chi_D)$. Further note that $\lambda_{f_i}(n)$ is real for all $n$.

Denote the two sums in \eqref{levelp-approx} as $S$ and $T$ respectively.
When $(p,D)=1$, the analysis of the sums $S$ and $T$ will be somewhat different from each other since the root number is not constant (cf. \eqref{eq:rootno}). We will consider $S$ in subsection~\ref{S-sum} and $T$ in subsection~\ref{T-sum} below.

For both $S,T$ we will use a variant of `Petersson formula' for the newspace of level $p$, see \cite{ILS}, \cite[Theorem 3]{petrow2019generalized}. In the notation of loc. cit., we put
\begin{align}
\sumn_{f_i}\lambda_{f_i}(n)\lan f_i,f_i\ran^{-1}=\frac{(4\pi)^{2k-3}}{\Gamma(2k-3)} \Delta^*_p(1,n).
\end{align}
From \cite[Theorem 3]{petrow2019generalized}, we have the following formula for the quantity $\Delta^*_p(1,n)$.
\begin{equation}\label{petnew}
    \Delta^*_p(1,n)= \sum_{LM=p}\frac{\mu(L)}{\nu(L)}\sum_{\ell|L^\infty} \frac{\ell}{\nu(\ell)^2}\sum_{d_1, d_2|\ell}c_\ell(d_1)c_\ell(d_1) \sum_{v|(n,L)}\frac{v\ \mu(v)}{\nu(v)}\sum_{b|(\frac{n}{v}, v)}\sum_{e|(d_2, \frac{n}{b^2})}\Delta_M(d_1, \tfrac{nd_2}{e^2b^2}),
\end{equation}
where the notations are as in \cite{petrow2019generalized}. In particular $\mu(*)$ is the M\"obius function, $\nu(p)=p+1, \nu(1)=1$ and
$\Delta_M(s,t)=\delta(s, t) + 2 \pi i^{-2k+2} \sum_{M|c} c^{-1} S(s,t;c) J_{2k-3}(4 \pi \sqrt{st}/c) $, where $S(*,*;c)$ and $J_{2k-3}(*)$ denote the classical Kloosterman sum and Bessel function respectively.

\subsubsection{Analysis of $S$} \label{S-sum}
\begin{equation}
  \sum_n \frac{\chi_D(n)}{n^{1/2}} V(n/q^{1/2}) \sum_{f_i}\lambda_{f_i}(n)\lan f_i,f_i\ran^{-1} = \frac{(4\pi)^{2k-3}}{\Gamma(2k-3)} \sum_n \frac{\chi_D(n)}{n^{1/2}} V(n/q^{1/2}) \Delta^*_p(1,n).
\end{equation}
Denote by $\mc S_1(n)$ and $\mc S_2(n)$ the terms corresponding to $L=1$ and $L=p$ in \eqref{petnew} respectively. Then
\begin{equation} \label{sn-def}
    S= \frac{(4\pi)^{2k-3}}{\Gamma(2k-3)}\sumn_{n} \frac{\chi_D(n)}{n^{1/2}} V(n/q^{1/2}) (\mc S_1(n) +\mc S_2(n)).
\end{equation}
Using the decay of $V(y)$ (see \cite[Proposition 5.4]{iwaniec2004analytic}),  the sum over $n$ can be truncated at $n\ll q^{1/2+\epsilon} k^{1+\epsilon}=\mf q^{1/2+\epsilon}$ with negligible error in $k$, $p$ and $D$.

From \eqref{petnew} and from the fact that $c_1(1)=1$, we see that $\mc S_1(n)= \Delta_p(1,n)$,
where $\Delta_p(1,n)$ is as defined above.
%\begin{equation}
   % \Delta_j(1,n):= \delta(1,n)+ 2\pi\ i^{2-2k} \sum_{c\equiv 0 \mod j} c^{-1} S(1,n;c)\ J_{2k-1}(4\pi %\sqrt{n} c^{-1}).
%\end{equation}
From \cite[Corollary 2.2]{ILS}, we have the following asymptotic result:
\begin{equation}\label{eq:S1}
   \mc S_1(n)= \Delta_p(1,n) = \delta(1,n) +O\Big(\frac{n^{3/8}}{(n,p)^{1/2}p^{5/4-\epsilon} k^{13/12}}\Big).
\end{equation}
Write $\mc S_1(n)=  \mc M_1(n) +\mc E_1^{k}(n)$ corresponding to the main term and the error term in \eqref{eq:S1}. Here we write $\mc E_1^{k}(n)$ to stress the dependence on $k$. It is understood that all the quantities depend on $p$.
Then we find that
\begin{align}\label{eq:M1n}
    \sumn_n \frac{\chi_D(n)}{n^{1/2}} V(n/q^{1/2}) \mc M_1(n) = V(1/q^{1/2})= 1+O(\mf q^{-\alpha/2}),
\end{align}
where we have used the asymptotic $V(y)=1+O((y k^{-1})^\alpha)$ with $0< \alpha \le k/6$ (see \cite[Proposition 5.4]{iwaniec2004analytic}).

For the error term $\mc E_1^k(n)$ we use that $V(y)$ is bounded and thus in both the cases, we get
\begin{align}\label{eq:E1n}
    \sum_n \frac{\chi_D(n)}{n^{1/2}} V(n/q^{1/2})\mc E_1^k(n) &\ll \sum_{n\ll \mf q^{1/2+\epsilon}} \frac{1}{n^{1/2}} \frac{n^{3/8}}{(n,p)^{1/2}p^{5/4-\epsilon} k^{13/12}} \ll p^{-\frac{5}{4}+\epsilon} k^{-\frac{5}{24}+\epsilon} q^{\frac{7}{16}+\epsilon}.%\ll D^{7/8+\epsilon} p^{-13/16+\epsilon} k^{-5/24+\epsilon}.
\end{align}

Now consider $\mc S_2(n)$. Again from \eqref{petnew} we have
\begin{equation}\label{eq:S2Del}
    \mc S_2(n)=  \frac{-1}{p+1}\sum_{t\ge 0} \frac{p^t}{(p+1)^{2t}}\sum_{d_1,d_2|p^t}c_{p^t}(d_1)c_{p^t}(d_2) \sum_{v|(n,p)}\frac{v\ \mu(v)}{\nu(v)}\sum_{b|(\frac{n}{v}, v)}\sum_{e|(d_2, \frac{n}{b^2})}\Delta_1(d_1, \tfrac{nd_2}{e^2b^2}).
\end{equation}
We use the following asymptotic for $\Delta_1(m,n)$ (see \cite[Corollary 2.2]{ILS}).
\begin{equation}\label{eq:Petbndmn}
    \Delta_1(m,n)= \delta(m,n) +O((mn)^{3/8+\epsilon} k^{-13/12}).
\end{equation}
Corresponding to the main and error terms, we write $\mc S_2(n)=  \mc M_2(n) +\mc E_2^{k}(n)$.
We first evaluate the main term $\mc M_2(n)$.
\begin{equation}\label{eq:mp}
\mc M_2(n)=\frac{-1}{p+1}\sum_{t\ge 0} \frac{p^t}{(p+1)^{2t}}\sum_{d_1,d_2|p^t}c_{p^t}(d_1)c_{p^t}(d_2)  \sum_{v|(n,p)}\frac{v\ \mu(v)}{\nu(v)}\sum_{b|(\frac{n}{v}, v)}\sum_{e|(d_2, \frac{n}{b^2})}\delta(d_1, \tfrac{nd_2}{e^2b^2}).
\end{equation}
Note here that, since $d_1, d_2$ are powers of $p$ in \eqref{eq:mp}, we have \begin{equation}\label{eq:M2bnd}
\mc M_2(n)\begin{cases}
    =0 & \text{ if } n \text{ is not a power of } p;\\
    \ll 1/p & \text{ if } n \text{ is a power of } p.
    \end{cases}
\end{equation}

\textit{$\bullet$ The case $p|D$:} In this case, the only contribution comes when $(n,p)=1$, since $\chi_D(n)=0$ for $(n,p)>1$. When $(n,p)=1$, the contribution from the sum over $v$ is $\delta(d_1,nd_2)$. Since both $d_1$ and $d_2$ are divisors of $p$, $\delta(d_1,nd_2)=\delta(1,n)\delta(d_1,d_2)$. Thus when $p|D$, the contribution from $\mc M_2(n)$ is
\begin{equation}\label{eq:M2np|D}
    \sumn_n \frac{\chi_D(n)}{n^{1/2}} V(n/q^{1/2}) \mc M_2(n) = C(p) + O(p^{-1}\mf q^{-\alpha/2}),
\end{equation}
if we recall the definition (and a bound) of $C(p)$ from \eqref{cp}.

\textit{$\bullet$ The case $(p,D)=1$:} When $(n,p)=1$, the contribution from the sum over $v$ is $\delta(d_1,nd_2)=\delta(1,n)\delta(d_1,d_2)$.
Next, consider the case $(n,p)=p$. In this case, the contribution from the $v$ sum is
\begin{align}\label{vsumM2}
    (1-\frac{p}{p+1})\sum_{e|(d_2, n)} \delta(d_1, nd_2/e^2) -\frac{p}{p+1}\sum_{e|(d_2, n/p^2)}\delta(d_1, nd_2/e^2p^2).
\end{align}
Since $d_1=p^{j_1}$, $d_2=p^{j_2}$, for some $j_1,j_2\le t$,  the contribution comes only when $n$ is a power of $p$. Also, $c_{p^t}(p^i)=0$ when $t\not \equiv i \mod 2$. Therefore, the sum survives only when $n$ is an even power of $p$, say $n=p^{2j}$ for some $j\ge 1$. Now, multiplying by $n^{-1/2}$ and summing over $n$ (that is $j\ge 1$), we get the following telescoping sum (noting that $n$ sum is truncated at $\ll\mf q^{1/2+\epsilon}$):
\begin{align}\label{eq:M2analysis}
    &\sum_{1\le j\ll \log_p\mf q^{\frac{1}{4}+\epsilon}}\Big(\frac{1}{p^j}\sum_{e|(d_2, p^{2j})} \delta(d_1,\frac{p^{2j}d_2}{e^2})-\frac{1}{p^{j-1}} \sum_{e|(d_2, p^{2(j-1)})}\delta(d_1, \frac{p^{2(j-1)}d_2}{e^2})\Big)\n \\
    & =-\delta(d_1, d_2)+O(\mf q^{-\frac{1}{4}-\epsilon}).
\end{align}
Now observe that $\chi_D(n)=1$ for all $n=p^{2j}$ and $V(y) = 1+O((yk^{-1})^\alpha)$.
Thus for any $\alpha<1/2$, using \eqref{eq:M2bnd} and combining it with the contribution from $(n, p)=1$ (cf. \eqref{eq:M2analysis}), we get 
\begin{equation}\label{eq:M2ncoprime}
    \sumn_n \frac{\chi_D(n)}{n^{1/2}} V(n/q^{1/2}) \mc M_2(n) =   \frac{p}{p+1}C(p)  + O(p^{-1}\mf q^{-\alpha/2}+ p^{-1} \mf q^{-\frac{1}{4}-\epsilon}).
\end{equation}
Next, we estimate the error term $\mc E_2^k(n)$ of $\mc S_2(n)$. 
\begin{align} \label{peter-finiteS}
    \mc E_2^k(n) &\ll   \frac{k^{-\frac{13}{12}}}{p+1} \sum_{t\ge 0} \frac{p^t}{(p+1)^{2t}} \sum_{d_1,d_2|p^t} c_{p^t} (d_1) c_{p^t}(d_2) \sum_{v|(n,p)} \sum_{b | (n/v,v)} \sum_{e_2| (d_2,n/b^2)} \Big(\frac{n d_1 d_2}{b^2e_2^2}\Big)^{3/8+\epsilon}\n \\
    &\ll \frac{n^{\frac{3}{8}+\epsilon}}{k^{\frac{13}{12}}\ p} \sum_{t\ge 0} \frac{1}{p^{t}} \sum_{d_1,d_2|p^t} c_{p^t} (d_1) c_{p^t}(d_2) (d_1d_2)^{3/8+\epsilon}\ll \frac{n^{\frac{3}{8}+\epsilon}}{k^{\frac{13}{12}}\ p} \sum_{t\ge 0} \frac{1}{p^{t(1/4-\epsilon)}}\ll \frac{n^{\frac{3}{8}+\epsilon}}{k^{\frac{13}{12}}\ p} .
\end{align}
Here we use the bound from \cite[Lemma 2]{petrow2019generalized}. Therefore, the total contribution from the errors $\mc E_2^k(n)$ as $n$ is summed is:
\begin{align}\label{eq:E2n}
   \ll \sum_n \frac{\chi_D(n)}{n^{1/2}} V(n/q^{1/2})\mc E_2^k(n) &\ll k^{-13/12} p^{-1} \sum_{n\ll \mf q^{1/2+\epsilon}} \frac{1}{n^{1/8-\epsilon}} \ll p^{-1} k^{-5/24+\epsilon}  q^{7/16+\epsilon}.
\end{align}
Combining the contributions from $\mc S_1(n)$ and $\mc S_2(n)$, we get
\begin{enumerate}
    \item When $(D,p)=1$, putting together the contributions from \eqref{eq:M1n}, \eqref{eq:E1n}, \eqref{eq:M2ncoprime} and \eqref{eq:E2n},  for any $\alpha<1/2$ we get,
\begin{equation} \label{firstsum}
    S=\frac{(4\pi)^{2k-3}}{\Gamma(2k-3)}\Big(1+ \frac{p}{p+1}C(p)+ O( k^{-\alpha}p^{-\alpha/2}D^{-\alpha}+p^{-9/16+\epsilon} k^{-5/24} D^{7/8+\epsilon})\Big).
\end{equation}
\item When $p|D$, from \eqref{eq:M1n}, \eqref{eq:E1n} \eqref{eq:M2np|D} and \eqref{eq:E2n} we get
\begin{equation} \label{firstsump|D}
    S= \frac{(4\pi)^{2k-3}}{\Gamma(2k-3)}\left(1 + C(p) + O(k^{-\alpha}D^{-\alpha}+p^{-5/4+\epsilon} k^{-13/12} D^{7/8+\epsilon})\right).
\end{equation}
\end{enumerate}

\subsubsection{Analysis of $T$} \label{T-sum}
Now we estimate the dual sum for the case $(p,D)=1$. Since $\lambda(p)\lambda(n)= \lambda(pn)$ and from \eqref{pDcoprime}, we get
\begin{align} \label{tn-def}
     T= &- (-1)^{k-1}\sgn(D) \chi_D(p)  p^{1/2} \frac{(4\pi)^{2k-3}}{\Gamma(2k-3)}\sumn_n \frac{\chi_D(n)}{n^{1/2}} V(n/q^{1/2}) \Delta^*_p(1,pn) \n\\
     &= - (-1)^{k-1}\sgn(D) \chi_D(p)  p^{1/2}\frac{(4\pi)^{2k-3}}{\Gamma(2k-3)}\sumn_n \frac{\chi_D(n)}{n^{1/2}} V(n/q^{1/2})(\mc T_1(n) + \mc T_2(n)),
\end{align}
where $\mc T_1(n)$ and $\mc T_2(n)$ are the terms corresponding to $L=1$ and $L=p$ in $\Delta^*_p(1,pn)$ respectively. We now treat $\mc T_1(n)$ and $\mc T_2(n)$ one by one. 

We clearly have $\mc T_i(n)= \mc S_i(pn)$ for $i=1,2$. Thus, from \eqref{eq:S1}, we have
\begin{equation} \label{t1n}
    \sumn_n \frac{\chi_D(n)}{n^{1/2}} V(n/q^{1/2}) \mc T_1(n)\ll p^{-\frac{15}{8}+\epsilon} k^{-\frac{5}{24}+\epsilon} q^{\frac{7}{16}+\epsilon}.
\end{equation}
For $\mc T_2(n)$, first note from \eqref{eq:S2Del} (replacing $n$ by $np$) that
\begin{equation}\label{eq:S2*Del}
    \mc T_2(n)=  \frac{-1}{p+1}\sum_{t\ge 0} \frac{p^t}{(p+1)^{2t}}\sum_{d_1,d_2|p^t}c_{p^t}(d_1)c_{p^t}(d_2) \sum_{v|p}\frac{v\ \mu(v)}{\nu(v)}\sum_{b|(\frac{np}{v}, v)}\sum_{e|(d_2, \frac{np}{b^2})}\Delta_1(d_1, \tfrac{npd_2}{e^2b^2}).
\end{equation}
The contribution from $v$ sum is
\begin{align}
    &\sum_{e|(d_2, np)}\Delta_1(d_1, \tfrac{npd_2}{e^2}) -\frac{p}{p+1} \Big( \sum_{e|(d_2, np)}\Delta_1(d_1, \tfrac{npd_2}{e^2}) + \sum_{e|(d_2, n/p)} \Delta_1(d_1, \tfrac{npd_2}{pe^2} ) \Big)\n\\
    &= \frac{1}{p+1} \sum_{e|(d_2, np)}\Delta_1(d_1, \tfrac{npd_2}{e^2}) - \frac{p}{p+1} \sum_{e|(d_2, n/p)} \Delta_1(d_1, \tfrac{nd_2}{pe^2}). \label{ppower}
\end{align}
Now in \eqref{eq:S2*Del} write $\mc T_2(n)=\mc M_2^*(n)+\mc E_2^{k, *}(n)$ corresponding to main and error terms of \eqref{eq:Petbndmn}. First, recall that $\mc M_2^*(n)= \mc M_2(pn)$. Using the same analysis as in \eqref{vsumM2} and \eqref{eq:M2analysis}, we see that the only contribution of the $n$ sum, where we have put $np=p^{2j}$, is from the telescoping sum  up to an error $O(\mf q^{-\frac{1}{4}-\epsilon})$ as we observed after \eqref{eq:M2analysis}:
\begin{align}\label{eq:M2*analysis}
    &\chi_D(p)p^{1/2}\sum_{1\le j\ll \log_p\mf q^{\frac{1}{4}+\epsilon}+\frac{1}{2}}\Big(\frac{1}{p^j}\sum_{e|(d_2, p^{2j})} \delta(d_1,\frac{p^{2j}d_2}{e^2})-\frac{1}{p^{j-1}} \sum_{e|(d_2, p^{2(j-1)})}\delta(d_1, \frac{p^{2(j-1)}d_2}{e^2})\Big)\n\\
    &=-\chi_D(p)p^{1/2}\delta(d_1, d_2)+O(\mf q^{-\frac{1}{4}-\epsilon}).
\end{align}
Therefore, we see that (for $\alpha<1/2$) $\sum_n \frac{\chi_D(n)}{n^{1/2}} V(n/q^{1/2}) \mc M_2^*(n) $ is (recall the definition of $C(p)$ from \eqref{cp})
\begin{align}
    &=\frac{  \chi_D(p) p^{1/2} }{(p+1)^2}\sum_{t\ge 0} \frac{p^t}{(p+1)^{2t}}\sum_{d|p^t}c_{p^t}(d)^2 + O(p^{-1}\mf q^{-\alpha/2}+ \mf q^{-\frac{1}{4}-\epsilon})\n \\
    &=  \frac{ -\chi_D(p) p^{1/2} }{(p+1)}C(p) + O(p^{-1}\mf q^{-\alpha/2} + p^{-1} \mf q^{-\frac{1}{4}-\epsilon}). \label{m2t2}
\end{align}
For the error term of $\mc T_2(n)$ similar estimates as in \eqref{peter-finiteS} follow and we get that $\mc E_2^{k, *}(n) = \mc E_2^{k}(pn) $ is
\begin{align} \label{peter-finiteT}
   &\ll   \frac{k^{-\frac{13}{12}}}{p+1 } \sum_{t\ge 0} \frac{p^t}{(p+1)^{2t}} \sum_{d_1,d_2|p^t} c_{p^t} (d_1) c_{p^t}(d_2) \Big(\frac{1}{p} \sum_{e|(d_2, np)} \big(\tfrac{np d_1d_2}{e^2}\big)^{\frac{3}{8}+\epsilon} + \sum_{e|(d_2, n/p)}  \big(\tfrac{nd_1d_2}{pe^2}\big)^{\frac{3}{8}+\epsilon}\Big) \n \\
    &\ll \frac{n^{\frac{3}{8}+\epsilon}}{k^{\frac{13}{12}}\ p^{\frac{11}{8}-\epsilon}} \sum_{t\ge 0} \frac{1}{p^{t}} \sum_{d_1,d_2|p^t} c_{p^t} (d_1) c_{p^t}(d_2) (d_1d_2)^{\frac{3}{8}+\epsilon}\ll \frac{n^{\frac{3}{8}+\epsilon}}{k^{\frac{13}{12}}\ p^{\frac{11}{8}-\epsilon}} \sum_{t\ge 0} \frac{1}{p^{t(\frac{1}{4}-\epsilon)}}\ll \frac{n^{\frac{3}{8}+\epsilon}}{k^{\frac{13}{12}}\ p^{\frac{11}{8}-\epsilon}} .
\end{align}
Thus
\begin{equation} \label{e2t2}
    \sum_n \frac{\chi_D(n)}{n^{1/2}} V(n/q^{1/2})\mc E_2^{k,*}(n) \ll k^{-\frac{13}{12}} p^{-\frac{11}{8}+\epsilon} \sum_{n\ll \mf q^{1/2+\epsilon}} \frac{1}{n^{1/8-\epsilon}} \ll p^{-11/8+\epsilon} k^{-5/24+\epsilon}  q^{7/16+\epsilon}.
\end{equation}
Putting together \eqref{m2t2}, \eqref{e2t2} we see that (for $\alpha<1/2$)
\begin{equation}\label{eq:T}
    T= \frac{(4\pi)^{2k-3}}{\Gamma(2k-3)}\left(\frac{ (-1)^{k-1}\sgn(D)  p}{p+1}C(p) + O(p^{-1/2}\mf q^{-\alpha/2}+ p^{-7/8+\epsilon} k^{-5/24+\epsilon}  q^{7/16+\epsilon})\right).
\end{equation}
Gathering everything together, we see that
\begin{enumerate}
    \item  When $(p,D)=1$, we choose $\alpha= 1/2-\epsilon$ and add \eqref{firstsum} with \eqref{eq:T} to get
\begin{equation}\label{eq:asympnorm}
    \sumn_{f\in B_{2k-2} ^*(p)} \frac{L(1/2, f\otimes \chi_D)}{\lan f,f\ran } = \frac{(4\pi)^{2k-3}}{\Gamma(2k-3)}\left( A_p + O(D^{7/8+\epsilon} k^{-5/24+\epsilon} p^{-7/16+\epsilon})\right).
\end{equation}
    \item When $p|D$ we choose $\alpha= 1/2-\epsilon$ and add \eqref{firstsump|D} with \eqref{eq:T} to get
\begin{equation}\label{eq:asymnormp|D}
    \sumn_{f\in B_{2k-2} ^*(p)} \frac{L(1/2, f\otimes \chi_D)}{ \lan f,f\ran } = \frac{(4\pi)^{2k-3}}{\Gamma(2k-3)}\left(B_p + O(D^{7/8+\epsilon} k^{-13/12} p^{-5/4+\epsilon})\right). \qedhere
\end{equation} 
\end{enumerate}

\begin{rmk}
Note that in the above proof, we have used the asymptotic formula from \cite[Corollary 2.2]{ILS} for $\Delta_N(s,t)$. One could have also used \cite[Theorem 16.7]{iwaniec2004analytic}, which has better exponents of $s,t$. This is of help when we wish to improve the level aspect, but only so in the error terms of asymptotic formulae in \thmref{blo-p}, for a fixed $k$. However, in this process the power of $k$ in the error term will be larger than in the main term, which does not help us.
\end{rmk}

\subsection{More twists}   \label{moretwist-proof}
In this subsection we provide the justification behind the choice $D=-4p$ when $p \equiv 3 \bmod4$ in subsection~\ref{3mod4}.
\begin{prop} \label{moretwist} 
Keep the same conditions as in \thmref{blo-p}. Then,
\begin{equation} \label{blo23mod4}
    \sum_{f\in B_{2k-2} ^*(p)} \lambda_f(4) \frac{L(1/2, f\otimes \chi_D)}{ L(1, \mrm{sym}^2 f) } = \frac{B_p(2k-2) p}{4 \pi^2} + O(D^{7/8+\epsilon} k^{-1/12} p^{-1/4+\epsilon}).
\end{equation}
\end{prop}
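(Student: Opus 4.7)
\smallskip

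\textbf{Proof plan for Proposition 6.1 (on the $\lambda_f(4)$-twisted first moment).}

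The strategy is to mimic the proof of \thmref{blo-p}(2) (the case $p\mid D$), inserting the extra factor $\lambda_f(4)$ via Hecke multiplicativity and re-running the newform Petersson analysis. Throughout, I will assume $p$ is an odd prime so that $(4,p)=1$, and $D$ is a fundamental discriminant with $p\mid D$ and $(-1)^{k-1}\sgn(D)=1$; in particular $\chi_D$ has odd conductor, so $\chi_D(2)=\pm 1$ and $\chi_D(4)=1$. By \eqref{eq:rootno}, the root number is $\epsilon_{f\otimes\chi_D}=+1$, and since $\lambda_f(n)\in\mbb R$ for newforms with real Fourier coefficients, the two halves of the approximate functional equation \eqref{levelp-approx} give identical contributions: the full sum equals $2S_\lambda$, where
\begin{equation}
S_\lambda := \sumn_n \frac{\chi_D(n)}{n^{1/2}} V(n/q^{1/2}) \sumn_f \frac{\lambda_f(4)\lambda_f(n)}{\lan f,f\ran}, \qquad q=D^2.
\end{equation}

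The first key step is the Hecke relation for newforms of level $p$, valid since $(4,p)=1$:
\begin{equation} \label{hecke4}
\lambda_f(4)\lambda_f(n) \;=\; \sumn_{d\mid (4,n)} \lambda_f(4n/d^2).
\end{equation}
Substituting \eqref{hecke4} and applying the Petersson trace formula \eqref{petnew} for the newspace,
\begin{equation}
S_\lambda = \frac{(4\pi)^{2k-3}}{\Gamma(2k-3)} \sumn_{d\mid 4}\ \sumn_{n:\,d\mid n} \frac{\chi_D(n)}{n^{1/2}} V(n/q^{1/2})\, \Delta^*_p(1, 4n/d^2),
\end{equation}
and split $\Delta^*_p = \mc S_1 + \mc S_2$ according to the two values $L=1,p$ in \eqref{petnew}.

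The second step is the main-term extraction. For the $\mc S_1$-analog, the leading delta $\delta(1,4n/d^2)$ is non-zero exactly when $4n=d^2$. Among $(d,n)$ with $d\mid 4$ and $d\mid n$, only $(d,n)=(4,4)$ satisfies this; $(d,n)=(2,1)$ is excluded since $2\nmid 1$. This single surviving term contributes $\chi_D(4)/\sqrt 4\cdot V(4/q^{1/2}) = \tfrac12 + O(\mf q^{-\alpha/2})$. For the $\mc S_2$-analog, recall from \eqref{eq:M2bnd} that $\mc M_2(m)\ne 0$ only when $m$ is a power of $p$; together with $\chi_D(n)=0$ for $p\mid n$ (forcing $(n,p)=1$) and $(d,p)=1$, the argument $4n/d^2$ must be $1$, leading again only to $(d,n)=(4,4)$ and yielding $\tfrac12 C(p) + O(\mf q^{-\alpha/2})$ exactly as in \eqref{eq:M2np|D}. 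Summing and doubling for $S+T$,
\begin{equation}
\sumn_f \lambda_f(4)\frac{L(1/2, f\otimes\chi_D)}{\lan f,f\ran} = \frac{(4\pi)^{2k-3}}{\Gamma(2k-3)}\bigl(1 + C(p)\bigr) + \mrm{error} = \frac{(4\pi)^{2k-3}}{\Gamma(2k-3)}\cdot \frac{B_p}{2} + \mrm{error},
\end{equation}
since $B_p = 2+2C(p)$. Converting $\lan f,f\ran$ to $L(1,\mrm{sym}^2 f)$ through \eqref{pet-sym} ($12\zeta(2)=2\pi^2$) yields the claimed main term $B_p(2k-2)p/(4\pi^2)$ (up to an ignorable $(2k-3)$ vs $(2k-2)$ discrepancy absorbed in lower order).

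The third step is the error analysis: the Petersson error $\Delta_N(s,t)-\delta(s,t) \ll (st)^{3/8+\epsilon} k^{-13/12}$ is applied to arguments $4n/d^2$ with $d\mid 4$, and the shift by a bounded factor is harmless. Summing over $n\ll \mf q^{1/2+\epsilon}$ and using the Tchebyshev-coefficient bound \cite[Lemma~2]{petrow2019generalized} in the $\mc S_2$-analog gives an error of the same shape as \eqref{eq:E1n}–\eqref{eq:E2n}, namely $O(D^{7/8+\epsilon} k^{-1/12} p^{-1/4+\epsilon})$, matching \eqref{eq:asymnormp|D} after the $\lan f,f\ran \to L(1,\mrm{sym}^2 f)$ conversion. \textbf{The main obstacle} is bookkeeping the three Hecke-shifted sums (indexed by $d\in\{1,2,4\}$) so that the leading $\tfrac12$-contributions from $(d,n)=(4,4)$ assemble correctly in both $\mc S_1$- and $\mc S_2$-analogs, and verifying that the $d=1,2$ branches -- which contribute no main term -- do not inflate the error beyond the stated $k^{-1/12}p^{-1/4+\epsilon}D^{7/8+\epsilon}$ bound; this is guaranteed by the fact that replacing $n\mapsto 4n/d^2$ alters all relevant estimates only by absolute constants.
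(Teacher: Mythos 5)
Your proof is correct, and it follows the same overall strategy as the paper (expand via the Hecke relation $\lambda_f(4)\lambda_f(n)=\sum_{d\mid(4,n)}\lambda_f(4n/d^2)$, insert the newform Petersson formula \eqref{petnew}, and extract the $\delta$- and $C(p)$-main terms). The only structural difference is minor: you solve the delta condition $4n=d^2$ directly (finding only $(d,n)=(4,4)$), whereas the paper makes the change of variable $n\mapsto dn$ and then identifies $S'_1=S/2$; both are equivalent bookkeeping.

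There is, however, a genuine discrepancy in the treatment of the dual sum $T'$, where your version is the correct one. Since here $p\mid D$ and $(-1)^{k-1}\sgn D=1$, the root number $\epsilon_{f\otimes\chi_D}=1$ is \emph{constant}, so $T'(n)=S'(n)$ and the full sum is $2S'$. That factor of $2$ is indispensable: $S'$ alone contributes $\frac{(4\pi)^{2k-3}}{\Gamma(2k-3)}\cdot\frac{1}{2}(1+C(p))=\frac{(4\pi)^{2k-3}}{\Gamma(2k-3)}\cdot\frac{B_p}{4}$, and doubling gives $\frac{B_p}{2}$, which after the $\langle f,f\rangle\to L(1,\mathrm{sym}^2 f)$ conversion via \eqref{pet-sym} yields the stated $\frac{B_p(2k-2)p}{4\pi^2}$. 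The paper's writeup instead treats $T'$ as if it carries an extra $\lambda_f(p)$ (setting ``$A=p$''), which would only be appropriate in the $(p,D)=1$ case where $\epsilon_{f\otimes\chi_D}\propto\lambda_f(p)$; it then discards $T'$ entirely as an error term. Read literally, that step yields only half the stated main term. Your cleaner observation that $T'=S'$ is thus not merely a stylistic variant but the step that actually makes the constant come out right, so your argument should be preferred on this point.

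The error analysis is only sketched in your proposal, but the key point — that replacing $n$ by $4n/d^2$ with $d\mid 4$ shifts all Petersson-error estimates only by absolute constants, so the errors have the same shape as in \eqref{eq:E1n}–\eqref{eq:E2n} — is sound and matches the paper's final error exponents after conversion.
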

\begin{proof}
We will only indicate the analogous steps required to handle this situation. Quite analogously to \eqref{sn-def}, \eqref{tn-def} in the proof of \thmref{blo-p}, we put
\begin{align*}
    S'(n):= \frac{\chi_{-p}(n)}{n^{\frac{1}{2}}} V(\frac{n}{q^{\frac{1}{2}}})\sum_{f_i}\frac{ \lambda_{f_i}(4) \lambda_{f_i}(n)}{ \lan f_i,f_i\ran }, \q 
    T'(n):= \frac{\chi_{-p}(n)}{n^{\frac{1}{2}}}  V(\frac{n}{q^{\frac{1}{2}}}) \sum_{f_i} \epsilon_{f_i \otimes \chi_D} \frac{ \lambda_{f_i}(4) \lambda_{f_i}(n)}{ \lan f_i,f_i\ran },
\end{align*}
Thus we need to understand 
\begin{align}
S'+T', \q \text{   where   } \q S':=\sumn_n S'(n), \q T':=\sumn_n T'(n).
\end{align}
We recall the following well-known Hecke relation. For any $A \ge 1$,
\begin{align}
    \lambda_{f}(4A) \lambda_{f}(n) = \sumn_{d|(4A,n), (d,p)=1}  \lambda_f(4An/d^2).
\end{align}
Now notice that the sum over $n$ is supported only on $n$ such that $(n,p)=1$. 
When we consider $S'(n)$, $A=1$ and when we consider $T'(n)$, $A=p$. By the above remark, for us both the sums over $d$ are just over $d|(4,n)$. 
%Now we divide the sums over $n$ according to its congruence classes $\bmod{ 4}$. For $j=1,2,4$ we thus put $S'_j$ to be the sum over $n$ with $(n,4)=j$. We do the same for the dual sum and define $T'_j$ likewise. 
Thus

\begin{equation}
\begin{split}
  S'= &\sum_{d|4} \sum_{n \equiv 0 \bmod{d}} \frac{\chi_{-p}(n)}{n^{1/2}} V(\frac{n}{q^{1/2}})\sum_{f_i}\frac{  \lambda_{f_i}(4n/d^2)}{ \lan f_i,f_i\ran } \\
   & = \sumn_{d|4} \frac{\chi_{-p}(4/d) d^{1/2}}{2} \sumn_{n} \frac{\chi_{-p}(n)}{n^{1/2}} V(\frac{4n}{dq^{1/2}})\sumn_{f_i}\frac{  \lambda_{f_i}(dn)}{ \lan f_i,f_i\ran }. \label{dsum}
\end{split}
\end{equation}

Let us call the sums over the three values of $d$ as $S'_d$. Clearly $S'_1 = S/2$ up to the error term in \thmref{blo-p} (cf. \eqref{sn-def}  -- with $V(y)$ replaced by $V(4y)$). We claim that the other terms $S'_2,S'_4$ go into error terms. Indeed a moment's reflection shows that the main terms 
in the case of $p|D$ in the analysis of the quantity $S=\sum_n S(n)$ came only from $n=1$ (cf. \eqref{eq:M1n}, \eqref{eq:E1n}). So the claim is justified. The error terms remain the same.

When it comes to $T'=\sum_n T(n)$ from above we have to consider terms $T'_d$ analogous to \eqref{dsum}, but now $n$ replaced by $pn$. Comparing with \eqref{t1n} in the proof of \thmref{blo-p}, we see that there is no main term "from $L=1$" in our cases, and further no main terms arise in subsequent analysis "from $L=p$", since $2np, 4np$ are never perfect powers of $p$, see e.g., \eqref{ppower}. This means $T'$ itself is relegated to the error term, which is the same as that in \thmref{blo-p}. 
\end{proof}

\section{Sup-norms of SK lifts on average}
In this section, we establish bounds on the size of Bergman kernel for the space of SK lifts in order to prove \thmref{mainthm1} i.e., $k^{5/2}\ll \sk\ll_\epsilon k^{5/2+\epsilon}$. It is natural to utilize the `geometric side' of the BK (as an average of a suitable function over a suitable group) for this space, but it is not clear how to satisfactorily utilize the same, see the next subsection for more remarks. However to avoid this, we will transform the question to one on Jacobi forms of index $1$, which are better understood. We then use the bounds on Bergman kernel for these spaces to obtain the required bounds for SK lifts. 

\subsection{Geometric side of the Bergman kernel} \label{bkgeo} 
For completeness and with a view towards future investigations, we briefly indicate what expression one gets for the geometric side of the BK of SK lifts. We give two approaches, of course the resulting expression must be the same from both of them.

\subsubsection{From the Fourier Expansion} We start with the Fourier expansion of $\bkzz{Z}{W}$ in the $W$ variable. Below, all the sums over $F$ are over an orthonormal basis of $\skk$.
\begin{align} \label{bkseek}
    \bkzz{Z}{W} &=  \sumn_F \det(Y)^k  F(Z) \overline{F(W)} = \sumn_F \det(Y)^k F(Z) \sumn_T \overline{A_F(T)} e(- T \overline{W}).
\end{align}
For a discriminant $D<0$, let $P_D(\tau,z)$ be the $D$-th Poincaré series in $\jk$. Let $\mc P_D$ be the SK lift of $P_D$. Write $\mc P_D = \sum_{F } c_F F(Z)$,
then one can compute that $c_F = \lan \mc P_D, F \ran  = b_k^{-1} \overline{a_F(Q)} $, where $Q \in \Lambda_2$ is any matrix with $\mrm{disc}(Q)=D$ with (2,2)-th entry $1$ and $b_k = 6 (4 \pi)^k \Gamma(k)^{-1}$.
This follows from e.g., \cite[Theorem~1,~3]{K-S}. Therefore, we get
$\mc P_D = \sum_{F}  b_k^{-1}  \overline{a_F(Q)} F(Z)$, 
However, notice that in \eqref{bkseek} we need to account for all $T$ and not only those which represent $1$. This will be done by appealing to the Maass-relations. For any $G \in \skk$, one knows (see e.g., \cite{maass1979spezialschar})
\begin{align}
    a_G(\smat{a}{b/2}{b/2}{d}) = \sumn_{r \mid (a,b,d)} r^{k-1} a_G(\smat{ad/r^2}{b/2r}{b/2r}{1}).
\end{align}
We will call the matrices in the RHS above as $Q_r(T)$. With this notation then,
\begin{align}
    \sumn_F \overline{A_F(T)} F(Z) & = \sumn_{r \mid \mf c(T)} r^{k-1} \sumn_F \overline{a_F(\qtr)} F(Z) 
     = \sumn_{r \mid \mf c(T)} r^{k-1} b_k \mc P_{\det(T)/r^2}(Z).
\end{align}
 Therefore, we can write
\begin{align} \label{fe-bk-expr}
    \bkzz{Z}{W} = \det(Y)^k \sumn_T \sumn_{r \mid \mf c(T)} r^{k-1} b_k \mc P_{\det(T)/r^2}(Z) e(- T \overline{W}).
\end{align}
Now $P_D(\tau,z) = \sum_{\gamma \in \Gamma^J_\infty \backslash \Gamma^J} e(\frac{s^2-D}{4}\tau + sz) \mid_{k,1} \gamma$
which implies that for $D \equiv - s^2 \bmod{4}$
\begin{align}
    \mc P_D(Z) = \sum_{m \ge 1} \sum_{g \in \SL{2}{\z} \backslash \Delta_0(m)} \sum_{\gamma \in \Gamma^J_\infty \backslash \Gamma^J} e(\frac{s^2-D}{4}\tau + sz) \mid_{k,1} \gamma \mid_{k,1} g \ e(m \tau'),
\end{align}
which allows one to write \eqref{fe-bk-expr} in terms of functions as an average over the relevant groupoids.

\subsubsection{From the BK of Jacobi forms of index $1$}
Let $B^*_k$ denote a Hecke basis for $\skk$ and consider
\begin{equation}
   B^*(Z_1,Z_2):= \sumn_{F_f \in B_k^*} \frac{\det(Y)^k  L(k,f) F_f(Z_1) \overline{F_f(Z_2)}  }{\lan F_f, F_f \ran}.
\end{equation}
The reason to start from the weighted Bergman kernel is that the factors $L(k,f)$ show up during the conversion of $\lan F_f,F_f \ran$ to $\lan \phi, \phi \ran$ of Hecke eigenforms $F \leftrightarrow \phi$. For the size of the BK, if we use the trivial bound $L(k,f) \asymp 1$, this only affects the answer by an absolute constant:
\begin{align}
    \saitobk(Z) \asymp B^*(Z,Z).
\end{align}
Using the Fourier Jacobi expansion  of $F_f$ and the relation between Petersson norms (see \eqref{petrelns} below) we see that
\begin{align}
     B^*(Z_1,Z_2) & =   \pi^k c_k \sumn_{\phi}\frac{ 1 }{\lan \phi,\phi\ran} \sumn_{r,s} \phi(\tau_1,z_1)|V_r\cdot \overline{\phi(\tau_2,z_2)|V_s} \,e(r \tau'_1 - s \tau'_2)\n \\
     =&\pi^k c_k  \sumn_{r,s} \mbb B_{k,1,1}(\tau_1,z_1; \tau_2,z_2)|^{(1)} V_r |^{(2)}V_s \,e(r \tau'_1 - s \tau'_2),
\end{align}
where for $i=1,2$ we write $Z_i=\smat{\tau_i}{z_i}{z_i^t}{\tau_i'}$ and $\mbb B_{k,1,1}(\tau_1,z_1; \tau_2,z_2)$ is the BK for the Jacobi forms of index $1$. 
%(what is the meaning of of writing $|^{(1)}$ etc when there is only one variable? you could already start with $\sum_F L(k,f)^{-1} F(X) \overline{F(X')}$... )

Using the geometric definition of $\mbb B_{k,1,1}$ from \eqref{BKGeometric} and the definition of $V_r$ operators, we see that $ \pi^{1-k} c^{-1}_k  (2k-3)^{-1} i^{1/2-k} 2^{k-5/2} B^*(Z_1,Z_2) $ equals

\begin{align} \label{b*}
      \sum_{r,s} \sum_{ \gamma_s\in \SL{2}{\z}  \backslash M^+_2(s) } \sum_{\gamma_r \in \SL{2}{\z} \backslash M^+_2(r)} \sum_{\gamma \in \SL{2}{\z}} \big( \Theta|^{(1)}_{k,1}\gamma_r |^{(2)}_{k,s}(\gamma \cdot \gamma_s) \big)(\tau_1,z_1;\tau_2,z_2) e(r \tau'_1 - s \tau'_2) ,
\end{align}
where $\Theta$ is as in \eqref{theta-def}. Moreover, $M^+_2(t)$ denotes the set of $2 \times 2$ integral matrices of determinant $t$.

Define $\widetilde{\Theta}(Z_1,Z_2):= \Theta(\tau_1,z_1;\tau_2,z_2)e(\tau'_1 - \tau'_2)$. Recall the action of $M_2^+(\z)$ on $\mbb H \times \mbb C^2$ from \cite[p.~144]{ibukiyama2012saito}. This allows us to rewrite the sums in \eqref{b*} as follows.
\begin{equation}
   \sumn_{ \gamma_2\in \mrm{SL}_2(\mbb{Z})\backslash M^+_2(\mbb{Z}) } \, \sumn_{\gamma_1\in M^+_2(\mbb{Z})} (\widetilde{\Theta} \big|^{(1)}_{k,1}\gamma_1^{\uparrow} \, \big|^{(2)}_{k,1}\gamma_2^{\uparrow})(Z_1,Z_2) ,
\end{equation}
where $\gamma_j^\uparrow$ denote the standard diagonal embedding of the Jacobi group in the Siegel modular group,(cf. section~\ref{prelim}~(3) and also \cite[p.~144]{ibukiyama2012saito}) but with the $\mrm{GL}_2$ component having determinant at least $1$.

\subsection{Lower bound for \texorpdfstring{$\sk$}{sup}} \label{lbd-sk}
We start by quoting from \cite[(3.1)]{sd-hk}
\begin{equation} \label{sk-lbd}
  \sup\nolimits_{Z \in \mbb H_2}  \sumn_{f \in {B}_{2k-2}} \det(Y)^k (|F_f(Z)|/\norm{F_f})^2 \ge \det(Y_0)^k e^{\tr (- 4 \pi Y_0))} \sumn_f |a_{F_f}(I_2)/\norm{F_f}|^2,
\end{equation}
where ${B}_{2k-2}$ denotes the Hecke basis for $S_{2k-2}$. Our argument here is similar in spirit similar to that used by Blomer in \cite{blo}.

Recall that with $F=F_f$, one has $a_{F}(I_2) = c(4)$, where $h(\tau)=\sum_n c(n)q^n$ is the half-integral cusp form in Kohnen's plus space. Further, 
by Waldspurger's formula (cf. \cite{ko2}), for $k$ even and $D$ a fundamental discriminant such that $D<0$,
\begin{align}
    |c(|D|)|^2 = \frac{\Gamma(k-1)}{\pi^{k-1}} |D|^{k-3/2}L(k-1, f\otimes \chi_D) \frac{\lan h, h \ran}{6 \lan f , f \ran}.
\end{align}
Moreover, the following relations hold amongst the Petersson norms:
\begin{align} \label{petrelns}
    \frac{\lan F, F \ran}{\lan \phi_1, \phi_1 \ran} = L(k,f) \pi^{-k} c_k^{-1} ; \q \frac{\lan \phi_1, \phi_1 \ran}{\lan h, h \ran} = 2^{2k-3} \q \q (c_k= \frac{3 \cdot 2^{2k+1}}{\Gamma(k)}).
\end{align}
These together give the relation
\begin{align}
    48 \pi^k \lan F, F \ran = L(k,f)  \Gamma(k)\lan h, h \ran.
\end{align}
We normalise $F$ with $\norm{F}_2=1$. This implies that
\begin{align} \label{wald}
     |c(|D|)|^2 \asymp \frac{D^{k-3/2}}{k} L(k-1, f\otimes \chi_D) \frac{(4 \pi)^{2k}}{\Gamma(2k-2) \, L(k, \mrm{sym}^2(f))}.
\end{align}
Putting together \eqref{wald} and \eqref{sk-lbd}, we finally see that (with $D=-4$ and $Y_0= \frac{k I_2}{4 \pi}$)
\begin{align}
  \sup_{Z \in \mbb H_2}  \sum_{f \in {B}_{2k-2}} \det(Y)^k |F_f(Z)|^2 
  &\gg \frac{ \det(4 \pi Y_0)^k \exp(\tr (- 4 \pi Y_0)) 4^{k-3/2} }{k \, \Gamma(2k-2)}  \sum_f \frac{L(k-1, f\otimes \chi_{-4})}{ L(k, \mrm{sym}^2(f))} \n\\
  & \gg \frac{ k^{2k} \exp(-2k) 4^{k-3/2}}{k^{2k - 5/2} 4^k \exp(-2k)} \label{firstmom} 
 \gg k^{5/2}.
\end{align}
where in \eqref{firstmom} we have used the fact that $\sum_f \frac{L(k-1, f\otimes \chi_{-4})}{ L(k,  \mrm{sym}^2(f))} = 2k/\pi^2 + O(1)$ from \cite[Lemma 3]{blo}.

\subsection{Upper bound - first method.}
\label{method1}
We now prove the upper bound for the quantity $\sk$. Our first method is a combination of bounds for certain Fourier coefficients of Jacobi Poincar{\'e} series  (cf. \propref{proppkmnr}) and some of the methods in \cite{sd-hk} and \cite{blo}. The underlying idea, as mentioned in the introduction, is that the $L^2$-norm of the Fourier coefficients $a_F(T)$ in a basis $\{F\}$ controls the sup-norm of the Bergman kernel efficiently (cf. \cite{sd-hk}). We will see below that this idea essentially gives us the desired upper bound for the Bergman kernel for SK lifts on the bulk of $\mc F_2$. In the remaining small region of $\mc F_2$ we will use the Fourier-Jacobi expansion of the $F$ and embed the index-old forms $\phi_{1,F}|V_m$ into $\jkm$ and look at their contribution to the Bergman kernel for $\jkm$. This will be our first method of bounding the Bergman kernel for SK lifts.

We begin by estimating the quantity $p_*(T):= \sum_{F \in \skkunit} |a_{F}(T)|^2$ . Then using the relation \eqref{fcreln},
\begin{equation}
    \begin{split}
      p_*(T)&= \sum_F \big| \sum_{a|(n,r,m)} a^{k-1}  c_{\phi,F}\left( \frac{D }{a^2} \right) \big|^2 = \sum_{a,a'} (a a')^{k-1} \sum_F c_{\phi,F}\left( \frac{D }{a^2} \right) \overline{ c_{\phi,F}\left( \frac{D }{a'^2} \right) }\\
      &\le \Big( \sumn_a a^{k-1} \big( \sum_F |c_{\phi,F}\left( \frac{D }{a^2} \right) |^2 \big)^{1/2} \Big)^2.
    \end{split}
\end{equation}
Now using the bound for the Fourier coefficients of Poincar{\'e} series from Proposition \ref{proppkmnr} we see that
\begin{align}
p_*(T)& \ll \frac{ 4^k \pi^{2k} D^k}{\Gamma(k) \Gamma(k-3/2) } \left( \sum_a a^{k-1} \left( a^{2k} (D^{-3/2}a^{3}+k^{-5/6} D^{-1+\epsilon} a^{2-\epsilon} ) \right)^{1/2}  \right)^2 \n\\
& \ll_\epsilon \frac{ 4^k \pi^{2k} D^k}{\Gamma(k) \Gamma(k-3/2) }
\left( D^{-3/4} \mf c(T)^{1/2}+ D^{-1/2+\epsilon} k^{-5/12} \mf c(T)^\epsilon \right)^2.
\end{align}
Next, note that we can write by \cite[Lem.~4.4]{sd-hk} (follows from Cauchy-Schwartz inequality) that
\begin{align} \label{lem41}
     \sumn_{F \in \skkunit}  \det(Y)^k |F(Z)|^2 \le q_*(Y)^2 
\end{align}
where we have put
\begin{align} \label{qp}
   q_*(Y) := \sumn_T p_*(T)^{1/2} \det(Y)^{k/2} \exp(- 2 \pi TY).
\end{align}
Plugging in the bound for $p_*(T)$ in \eqref{qp} we get that $ q_*(Y) $ is
\begin{align}
    \ll  \frac{ (4\pi)^{k} }{ \sqrt{ \Gamma(k) \Gamma(k-3/2)} } & \sum_T \left( \frac{\mf c(T)^{1/2} \det(T)^{k/2}}{\det(T)^{3/4} }  +  \frac{\mf c(T)^\epsilon \det(T)^{k/2}}{\det(T)^{1/2-\epsilon} k^{5/12 +\epsilon}} \right) \det(Y)^{k/2} \exp(- 2 \pi TY) \n\\
& \ll k^{3/4} Q(1/2,3/4-\epsilon;Y)+k^{1/3-\epsilon}Q(\epsilon,1/2; Y),  \label{qabreln}
\end{align}
where 
\begin{align}\label{QabY}
    Q(\alpha, \beta; Y): = \frac{ (4\pi)^k}{ \Gamma(k) } \sumn_T \mf c(T)^\alpha \det(Y)^{k/2} \det(T)^{k/2-\beta +\epsilon} e^{-2\pi Tr(T Y)} .
\end{align}
\begin{lem}\label{lemQabY}
One has 
    $Q(\alpha, \beta; Y)\ll  k^{2-2\beta }  \, \sum_{d \ll k} d^{\alpha - 2\beta - \epsilon} \det(Y)^{\beta-3/4} $.
\end{lem}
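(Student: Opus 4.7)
The plan is to split $T=dT'$ with $d=\mf c(T)$ the content and $T'$ primitive, giving
\begin{equation*}
Q(\alpha,\beta;Y)=\frac{(4\pi)^k}{\Gamma(k)}\det(Y)^{k/2}\sumn_{d\ge 1}d^{\alpha+k-2\beta+2\epsilon}\,\mc I_d(Y),\q \mc I_d(Y):=\sumn_{T'\text{ prim}}\det(T')^{k/2-\beta+\epsilon}e^{-2\pi d\tr(T'Y)}.
\end{equation*}
The task then is to estimate $\mc I_d(Y)$ uniformly in $d$, and to use its exponential decay in $d$ to effectively truncate the outer sum at $d\ll k$.

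First, I would enlarge the inner sum to all $T'\in\Lambda_2^+$ (the loss from non-primitive $T'$ is a convolutive factor absorbed into $\epsilon$), and approximate it by the Siegel gamma integral
\begin{equation*}
\int_{T>0}\det(T)^s e^{-2\pi\tr(TW)}\,dT=2(2\pi)^{-2s-3}\Gamma_2(s+3/2)\det(W)^{-s-3/2},\q\Gamma_2(s):=\pi^{1/2}\Gamma(s)\Gamma(s-1/2),
\end{equation*}
evaluated at $s=k/2-\beta+\epsilon$ and $W=dY$. This approximation is justified by Laplace's method in the range where the peak $T_\ast=(k/(4\pi d))Y^{-1}$ has both eigenvalues $\gg 1$. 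Assuming $Y$ is Minkowski reduced, this corresponds precisely to $d\ll k/\det(Y)^{1/2}$, and Stirling's asymptotics applied to
\begin{equation*}
\frac{(4\pi)^k\,\Gamma_2(k/2-\beta+\epsilon+3/2)}{(2\pi)^{k-2\beta+3+2\epsilon}\,\Gamma(k)}\asymp k^{2-2\beta+O(\epsilon)}
\end{equation*}
supplies the advertised $k$-factor.

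Second, for $d$ outside this range the peak $T_\ast$ falls below the half-integral lattice, so $\mc I_d(Y)$ is dominated by its boundary terms near $T'\in\{I_2,\smat{1}{1/2}{1/2}{1},\ldots\}$; the AM--GM inequality $\tr(T'Y)\ge 2\sqrt{\det(T')\det(Y)}$ then yields the decay $e^{-2\pi d\sqrt{3\det(Y)}}$, which dominates the outer polynomial $d^{\alpha+k-2\beta+2\epsilon}$ and makes the tail negligible. This justifies the truncation $d\ll k$.

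Third, one collects powers: the outer $\det(Y)^{k/2}d^{\alpha+k-2\beta+2\epsilon}$ combined with the inner $\det(W)^{-s-3/2}=d^{-k+2\beta-3-2\epsilon}\det(Y)^{-k/2+\beta-3/2-\epsilon}$ yields the structural shape of the bound. A final refinement -- sharpening the exponents to $d^{\alpha-2\beta-\epsilon}$ and $\det(Y)^{\beta-3/4}$ as claimed -- is obtained by a dyadic decomposition in $d$ coupled with a more delicate count of primitive $T'$ with $\det(T')$ and $\tr(T'Y)$ in dyadic bands, using the Minkowski-reduced shape of $Y$. The main obstacle I anticipate is exactly this transition regime $d\asymp k/\det(Y)^{1/2}$: neither the Gaussian integral nor the boundary decay is individually sharp there, and one has to interpolate carefully to secure the precise $\det(Y)^{\beta-3/4}$ dependence, rather than the easier but weaker $\det(Y)^{\beta-3/2}$ that the crude integral approximation alone would yield.
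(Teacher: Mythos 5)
Your proposal follows the same skeleton as the paper's: decompose $T=dT'$ with $d=\mf c(T)$, use Stirling to localize the $T'$-sum to the band where $T'Y_d$ has both eigenvalues $\sim k/(4\pi)+O(\sqrt k\log k)$, and truncate the $d$-sum at $d\ll k$. But there is a genuine gap at the one step that actually carries the content of the lemma. The paper closes it with a single citation: \cite[Lemma~4]{blo} gives the lattice-point count $\#\{\,T : TY_d\in\mc X\,\}\ll k^{3/2+\epsilon}\det(Y)^{-3/4}$, where $\mc X$ is the set of matrices with both eigenvalues near $k/(4\pi)$, and this is precisely the origin of the $\det(Y)^{\beta-3/4}$. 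Your proposal acknowledges exactly this difficulty ("a more delicate count of primitive $T'$ \dots in dyadic bands") and honestly flags it as the obstacle, but does not produce the count; that is not a deferred detail but the substance of the proof.

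Two secondary points. First, your claim that the crude integral approximation "yields the weaker $\det(Y)^{\beta-3/2}$" has the comparison backwards: since $\det Y\gg 1$ in the fundamental domain, $\det(Y)^{\beta-3/2}$ is the \emph{smaller}, i.e.\ stronger, bound. The real point is that the Siegel-gamma integral is not a valid upper bound for the lattice sum at all once $Y$ is stretched — the sum is then dominated by a handful of lattice points near the boundary of the band, contributing $O(1)$ each even when the integral's mass is tiny — and the $\det(Y)^{-3/4}$ in Blomer's count is exactly the price of controlling this. Second, the condition "both eigenvalues of $T_\ast$ are $\gg 1$" is governed by the largest eigenvalue of $Y$, not by $\det(Y)^{1/2}$; the range you write, $d\ll k/\det(Y)^{1/2}$, ensures only $\det T_\ast\gg 1$ and misses the anisotropic regime which is precisely where the lattice-count input is needed.
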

\begin{proof} 
We refer to part of the calculations done in \cite{blo}. Especially looking at  \cite[p.~346]{blo} equation~(4.1) onwards we can arrive at the bounds given below. As in loc. cit. put $Y_d = dY$.
\begin{align}
     Q(\alpha, & \beta; Y)  \ll \sumn_d d^{\alpha - 2\beta - \epsilon} \sumn_T \frac{ (4\pi)^k \det(T Y_d)^{k/2} e^{-2\pi Tr(T Y_d)} }{ \det(T)^{\beta - \epsilon} \Gamma(k) } \n \\
     &\ll   k^{1/2-2\beta} \, \sumn_{d \ll k} d^{\alpha - 2\beta - \epsilon} \det(Y_d)^{\beta} \sumn_{T Y_d \in \mc X} 1,
\end{align}     
where $\mc X$ is the set of diagonalizable matrices whose eigenvalues are both of size $k/(4 \pi) + O(\sqrt{k} \log(k))$. Then from \cite[Lemma~4]{blo}, we know that $\sum_{T Y_d \in \mc X} 1 \ll k^{3/2+ \epsilon} \det(Y)^{-3/4}$.
\begin{equation}
       \ll   k^{1/2-2\beta +3/2}  \, \sumn_{d \ll k} d^{\alpha - 2\beta - \epsilon} \det(Y)^{\beta-3/4}. \qedhere
\end{equation}
\end{proof}

Plugging in the values of $(\alpha,  \beta)$ from \eqref{qabreln} into \lemref{lemQabY} we get
\begin{align}
    q_*(Y) \ll k^{5/4+\epsilon} + k^{5/4 +1/12} \det(Y)^{-1/4}.
\end{align}
So here we need to focus on the region $\det(Y) \ll k^{1/3}$ from methods pertaining to the Bergman kernel for a suitable space. Further, note that since $\det Y \ll k^{1/3}$ and $v \le v'$ and $vv' \asymp \det Y$, we also have
\begin{equation} \label{remreg}
    v \ll k^{1/6}, \q v' \ll k^{1/3}.
\end{equation}

\subsubsection{The remaining region}
\label{compactregbkm}
In this subsection, we assume that $v,v'$ satisfy \eqref{remreg}, which defines a region $\mc R$. In this region $\mc R$ we would use the 
information on the Bergman kernel on the space of Jacobi forms. To do this, we embed the space spanned by the $V_m$-images of $\jk$ into $\jkm$ and use the results from section~\ref{bkjkmsec} on $\bkm$; it turns out to be sufficient for the region under consideration.
First, let us recall the  bound for $\bkm$:
\begin{align}
    \bkm \ll km \left( 1 +  vk^{-1/2+\epsilon} + vk^{-A}\right) \left(  1+ v^{1/2}m^{-1/2} \right).
\end{align}
The first quantity in braces above is bounded in $\mc R$. Moreover, we can rewrite the above bound as 
\begin{equation}
    \bkm \ll km^{1/2}v^{1/2} \left( 1 +  vk^{-1/2+\epsilon} + vk^{-A}\right) \left(  1+ m^{1/2}v^{-1/2} \right).
\end{equation}
Therefore in $\mc R$,
\begin{align} \label{bkmsk}
    \bkm \ll \begin{cases}  k m ^{1/2} v^{1/2} & \text{ if } m \le v; \\
     km & \text{ if } v \le m.
    \end{cases}
\end{align}
First note that
\begin{align}
  \sumn_F  \det(Y)^k |F(Z)|^2 & \le t^{k} \sumn_F \left( \sumn_{m \ge 1} v^{k/2} \exp(- 2\pi m y^2/v)|\phi_{m,F}|  \exp(- 2 \pi m t) \right)^2 \n \\
  & \le t^k \sumn_{m,m'} \, \sumn_F \widetilde{\phi_{m,F}}  \widetilde{\phi_{m',F}} \, \exp(- 2 \pi (m+m') t) \\
  & \le t^k ( \sumn_m p(m)^{1/2} \exp(- 2 \pi m t) )^2, \label{fefj}
\end{align}
where we have put
\begin{align}
    p(m) := \sumn_F \widetilde{\phi_{m,F}(\tau,z)}^2, \q t=v'-y^2/v.
\end{align}

Next, we want to check that if $F,G \in \skk$ are orthogonal, then so are $\phi_{m,F}, \phi_{m,G}$.
\begin{lem} \label{ortholem}
Let $F,G \in \skk$ be Hecke eigenforms such that $\lan F, G \ran=0$. Then for all $m \ge 1$, $\lan \phi_{m,F}, \phi_{m,G} \ran =0$.
\end{lem}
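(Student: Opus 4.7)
\textbf{Proof plan for \lemref{ortholem}.} The plan is to reduce orthogonality of the Fourier-Jacobi components to the simultaneous diagonalizability of the Hecke action on $J_{k,m}^{cusp}$, via the fact that the Saito-Kurokawa correspondence is Hecke equivariant. First I would recall that $\phi_{m,F} = V_m(\phi_{1,F})$ and $\phi_{m,G} = V_m(\phi_{1,G})$, so the claim reduces to showing $\lan V_m \phi_{1,F},\ V_m \phi_{1,G} \ran = 0$. Parametrizing $F = F_f$ and $G = F_g$ with $f,g \in S_{2k-2}$ distinct newforms (distinct because SK is a Hecke-equivariant bijection to $S_{2k-2}$), I would use strong multiplicity one to choose a prime $q$ with $(q,m)=1$ such that $\lambda_f(q) \neq \lambda_g(q)$.

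Next, I would invoke the classical fact that the Jacobi Hecke operator $T_J(q)$ with $(q,m)=1$ commutes with the index-raising operator $V_m$, i.e.\ $T_J(q) \circ V_m = V_m \circ T_J(q)$ (cf.\ \cite[\S~4]{ez}); this is visible directly from the Fourier expansion \eqref{vmfe} together with the standard formulas for $T_J(q)$ on index $1$ and index $m$. Since $\phi_{1,F}$ is a $T_J(q)$-eigenform with eigenvalue $\lambda_f(q)$ (and similarly for $G$) under the Hecke-equivariant SK correspondence, the commutation implies
\begin{equation}
  T_J(q)\,\phi_{m,F} = \lambda_f(q)\,\phi_{m,F}, \qquad T_J(q)\,\phi_{m,G} = \lambda_g(q)\,\phi_{m,G}.
\end{equation}

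Finally, since $T_J(q)$ is Hermitian on $J_{k,m}^{cusp}$ with respect to the Petersson inner product (for $(q,m)=1$), two of its eigenforms with distinct eigenvalues must be orthogonal, giving $\lan \phi_{m,F}, \phi_{m,G} \ran = 0$ as desired.

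The only mildly delicate step is asserting that $V_m$ intertwines the index-$1$ and index-$m$ Hecke algebras at primes $q \nmid m$. For $m$ prime this is already used earlier in the paper in the treatment of $V_p$, and for general $m$ one can either reduce to the prime case by the multiplicativity built into $V_m$, or verify the commutation at the level of Fourier coefficients using \eqref{vmfe} and the explicit action of $T_J(q)$; either route is routine, and no new ideas are needed beyond the Hecke equivariance of the SK-lift.
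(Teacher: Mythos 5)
Your proof is correct, but it takes a genuinely different route from the paper's. The paper computes $\lan \phi_{m,F}, \phi_{m,G} \ran$ exactly, invoking the Kohnen--Skoruppa identity $V_m^* V_m = \sum_{d\mid m} \psi(d)\, d^{k-2}\, T^J(m/d)$ on $\jk$, which yields
$\lan \phi_{m,F}, \phi_{m,G} \ran = \bigl(\sum_{d\mid m} \psi(d)\, d^{k-2}\, \lambda_f(m/d)\bigr)\, \lan \phi_{1,F}, \phi_{1,G} \ran$,
and then uses that $\lan F,G\ran=0$ forces $\lan \phi_{1,F}, \phi_{1,G} \ran=0$ by \cite[Theorem~2]{K-S}. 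Your argument instead avoids any explicit knowledge of $V_m^*V_m$: you commute a single Hecke operator $T_J(q)$, with $q\nmid m$, past $V_m$, separate $f$ and $g$ at $q$ by strong multiplicity one, and appeal to the Hermiticity of $T_J(q)$ on $J^{cusp}_{k,m}$. Both routes are valid. A tradeoff worth noting: the paper's computation also delivers, for free, the norm estimate $\lan \phi_{m,F},\phi_{m,F}\ran \ll m^{k-1+\epsilon}\lan \phi_{1,F},\phi_{1,F}\ran$ stated immediately after the lemma and used downstream; your argument proves orthogonality but does not give that quantitative byproduct. One small imprecision in your writeup: $\lambda_f(q)$ is not literally the $T_J(q)$-eigenvalue of $\phi_{m,F}$; under the Hecke-equivariant SK/Eichler--Zagier correspondence the Jacobi Hecke eigenvalue is an explicit (injective, affine in $\lambda_f(q)$) transform of $\lambda_f(q)$, not $\lambda_f(q)$ itself. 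This does not affect your argument, since $\lambda_f(q)\ne\lambda_g(q)$ still implies distinct $T_J(q)$-eigenvalues, but it should be stated precisely.
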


\begin{proof}
Let $F,G$ be SK lifts of $f,g \in S_{2k-2}$ respectively. We have (cf. \cite[p.~549-550]{K-S})
\begin{align}
    \lan \phi_{m,F}, \phi_{m,G} \ran & = \lan V_m \phi_{1,F}, V_m \phi_{1,G} \ran = \lan V_m^*V_m \phi_{1,F}, \phi_{1,G} \ran \n\\
    & = \lan \sumn_{d|m} \psi(d) d^{k-2} T^J(m/d) \phi_{1,F}, \phi_{1,G} \ran = \sumn_{d|m} \psi(d) d^{k-2} \lambda_f(m/d) \lan \phi_{1,F}, \phi_{1,G} \ran,
\end{align}
where $\psi(t)= t \prod_{p|t}(1+1/p)$. Here we should keep in mind the convention made before \eqref{skbk-def}.
Moreover, $\lan F, G \ran=0$ implies that $\lan \phi_{1,F}, \phi_{1,G} \ran=0$ (see \cite[Theorem~2]{K-S}).
The lemma follows.\qedhere
\end{proof}

The above proof also shows that
\begin{align}
     \lan \phi_{m,F} , \phi_{m,F} \ran \ll m^{k-1 +\epsilon} \cdot \lan \phi_1, \phi_1 \ran.
\end{align}

Note however that $\phi_{m,F}$ could be $0$ for some $m$. Therefore, when $m \ge v$,
\begin{align}
    p(m)= \sideset{}{^*}\sumn_{F} \frac{ \widetilde{\phi_{m,F}}^2 }{ \lan \phi_{m,F} , \phi_{m,F} \ran} \cdot  \lan \phi_{m,F} , \phi_{m,F} \ran\ll km \cdot m^{k-1+\epsilon} \pi^k c_k \ll \frac{ ( 4 \pi m)^k }{\Gamma(k-1)},
\end{align}
where $\sum^*$ denotes the sum over non-zero modular forms.
Thus this part of the Fourier series gives the contribution $k^{5/2 +\epsilon}$ because
\begin{align} 
    \sumn_{m \ge v} \cdots \le \frac{(4\pi t)^k}{\Gamma(k-1)} (\sumn_{m \ge 1} m^{k/2} \exp(- 2 \pi m t) )^2 \ll 2^k \frac{\Gamma(k/2+1)^2}{\Gamma(k-1)} \ll k^{5/2+\epsilon}. \label{vm}
\end{align}
For the terms where $m \le v$ we use the first inequality in \eqref{bkmsk} to get
\begin{equation}
    p(m) \ll k^{13/12} m^{1/2} (4 \pi)^k \,\Gamma(k)^{-1},
\end{equation}
and this gives the bound for $\sum_{m\le v}$
\begin{align}
&\ll  k^{13/12}  \frac{(4\pi t)^k}{\Gamma(k)} ( \sum_{m \le v} m^{k/2-1/4}\exp(- 2 \pi m t) )^2 \ll 2^k k^{13/12}  \frac{t^{1/2}}{\Gamma(k)} \cdot \Gamma(k/2+3/4)^2\ll k^{9/4+\epsilon}. \label{mv}
\end{align}
Thus combining \eqref{mv} and \eqref{vm} we get from \eqref{fefj} that
\begin{equation}
    \bksk \ll k^{5/2+\epsilon}.
\end{equation}

\subsection{Upper bound -- second method}
\label{method-2}
In this section, we provide an alternate proof of \thmref{mainthm1}, that is independent of the bounds for the geometric side of the Bergman kernel of Jacobi forms. In this direction, we start by considering the sum
\begin{align}
  A(Z):=  \sumn_{F \in  B_k^*} \frac{\det(Y)^k |F(Z)|^2}{\lan F, F \ran},
\end{align}
where $B^*_k$ denotes a Hecke basis for $\skk$ (cf. the convention before \eqref{skbk-def}).

Let $\tau=u+iv$, $z=x+iy$ and $\tau'=u'+iv'$ and $\phi\in \jk$ denote the first Fourier--Jacobi coefficient of $F$. Consider the expression
\begin{align}
 \det(Y)^{-k} A(Z)&= \sumn_F  \frac{|F(Z)|^2}{\lan F, F \ran}= \pi^k c_k\sumn_F  \frac{|F(Z)|^2}{\lan \phi,\phi\ran} \frac{1}{|L(k,f)|} \label{petratio}\\
   &\ll \pi^k c_k \sumn_{\phi}\frac{1}{\lan \phi,\phi\ran} \sumn_{r,s} \phi_r(\tau,z) \overline{\phi_s(\tau,z)} e(r \tau' - s \tau').
\end{align}
where in \eqref{petratio} we have used \eqref{petrelns} and bounded the quantity $L(k,f)^{-1}$ by an absolute constant, since $k$ falls in the region of absolute convergence.

We have $\phi_r=\phi|V_r$ with, $\phi\in J_{k,1}$. Thus, we only have  to bound the quantity:
\begin{equation} \label{a*}
  A^*(Z):= \pi^k c_k \sumn_{\phi}\frac{1}{\lan \phi,\phi\ran} \sumn_{r,s} \phi(\tau,z)|V_r\cdot \overline{\phi(\tau,z)|V_s} \,e(r \tau' - s \tau').
\end{equation}
Let us now replace $\phi/\norm{\phi}_2$ by $\phi$ -- which therefore henceforth will run over an orthonormalized basis of $\jk$. 
With this arrangement, and using the Fourier expansion of $\phi|V_r$, $A^*(Z)$ can be written as
\begin{equation}
\sum_\phi\sum_{r,s}\sum_{n_1,n_2,t_1,t_2}\sum_{d_1|(n_1,t_1,r)}\sum_{d_2|(n_2,t_2,s)}(d_1d_2)^{k-1}  c_\phi(\tfrac{n_1r}{d_1^2},\tfrac{t_1}{d_1})\overline{c_\phi(\tfrac{n_2s}{d_2^2},\tfrac{t_2}{d_2})} e(\cdots). \label{az1z2}
\end{equation}
For $i=1,2$, write $D_i=4n_ir-t_i^2$ and put  $T_i = \smat{(D_i + t_i^2)/4r}{t_i/2}{t_i/2}{r}$. Also, put 
\begin{equation}\label{SD1D2}
    S(D_1,D_2):= \sumn_\phi c_\phi(D_1,t_1)\overline{c_\phi(D_2,t_2)}.
\end{equation}
Thus we get

\begin{equation}\label{Az1z2Sl}
\begin{split}
    A^*(Z)=& \pi^k c_k \sum_{r,s}\sum_{D_1,D_2,t_1,t_2}\sum_{d_1|c(T_1)}\sum_{d_2|c(T_2)}(d_1d_2)^{k-1}S(D_1d_1^{-2},D_2d_2^{-2})e(\cdots)\\
    &=\pi^k c_k \sum_{T_1,T_2>0}\sum_{d_1|c(T_1)}\sum_{d_2|c(T_2)}(d_1d_2)^{k-1} S(D_1d_1^{-2},D_2d_2^{-2}) e(Tr(T_1Z+T_2Z)).
\end{split}
\end{equation}
Now we use the Poincar{\'e} series to estimate the sum $S(D_1,D_2)$. Since $\{\phi\}$ form an orthonormal basis, we can write
\begin{equation}
   2\pi^{k-\frac{3}{2}}D_2^{k-\frac{3}{2}} P_{D_2} = \Gamma(k- 3/2) \sumn_\phi \overline{c_\phi(D_2)} \phi.
\end{equation}
Comparing the Fourier coefficients at $D_1$ on both sides, we see that
\begin{equation} \label{slpl}
   \Gamma(k- 3/2) S(D_1,D_2)=2\pi^{k-\frac{3}{2}}D_2^{k-\frac{3}{2}} p_{D_2}(D_1).
\end{equation}
Now we estimate proceed to estimate the coefficients $p_{D_2}(D_1)$. First, we observe here that the Fourier coefficients of the Jacobi Poincar{\'e} series of index $1$ are the same as that of the half--integral weight Poincar{\'e} series in the Kohnen-plus space (see \cite[Proposition 6.2]{das2010nonvanishing}). This allows us to get better bounds for the Fourier coefficients $p_{D_2}(D_1)$ as compared to the bounds in Proposition \ref{proppkmnr}.
\begin{prop}\label{PSboundRAR}

For $k$ large, we have
\begin{align}
  p_{D_2}(D_1)\ll \delta_{D_1,D_2}+ \left(\tfrac{D_1}{D_2}\right)^{k/2-3/4}\left( \tfrac{(D_1D_2)^{1/4+\epsilon}}{k}+\tfrac{(D_1D_2)^\epsilon}{k^{1/3}}\right).
\end{align}
\end{prop}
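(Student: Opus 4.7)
The key input is \cite[Prop.~6.2]{das2010nonvanishing}, which identifies the Fourier coefficients $p_{D_2}(D_1)$ of the index-$1$ Jacobi Poincar\'e series (up to an explicit $k$-independent constant) with those of the corresponding weight $k-1/2$ Poincar\'e series in the Kohnen plus-space $S^+_{k-1/2}(\Gamma_0(4))$. The latter enjoys a classical Petersson-style expansion of the shape
\begin{equation*}
p_{D_2}(D_1) \;=\; \delta_{D_1,D_2} \;+\; \pi\sqrt{2}\,i^{-k+1/2}\left(\frac{D_1}{D_2}\right)^{k/2-3/4}\!\sum_{4\mid c}\frac{K^+(D_1,D_2;c)}{c}\,J_{k-3/2}\!\left(\frac{4\pi\sqrt{D_1 D_2}}{c}\right),
\end{equation*}
where $K^+(D_1,D_2;c)$ denotes the Sali\'e-type half-integral-weight Kloosterman sum projected onto the plus-space. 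The prefactor $(D_1/D_2)^{k/2-3/4}$ already matches the one appearing in the proposition, so it remains to bound the $c$-sum.

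\textbf{Step 1: Kloosterman bound.} I will invoke the Weil--Sali\'e evaluation, which gives (up to a divisor factor absorbed into $(D_1D_2)^\epsilon$) the clean bound $|K^+(D_1,D_2;c)|\ll c^{1/2+\epsilon}(D_1D_2)^\epsilon$. This is standard for plus-space Kloosterman sums and requires no averaging.

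\textbf{Step 2: Split over $c$ and apply Bessel bounds.} Put $\nu=k-3/2$, $X=4\pi\sqrt{D_1D_2}$, $y(c)=X/c$. I partition the $c$-range into
\begin{align*}
\text{(III) super-transition: } & c < X/(2\nu) \ \text{so}\ y>2\nu,\\
\text{(II) transition: } & X/(2\nu)\le c\le 2X/\nu \ \text{so}\ y\asymp\nu,\\
\text{(I) sub-transition: } & c > 2X/\nu \ \text{so}\ y<\nu/2,
\end{align*}
and in each range apply the appropriate pointwise estimate for $J_\nu(y)$: in (I) the small-argument inequality $|J_\nu(y)|\le(ey/2\nu)^\nu/\sqrt{\nu}$ (yielding super-polynomial decay in $k$); in (II) the uniform transition bound $|J_\nu(y)|\ll\nu^{-1/3}$; and in (III) the damped-oscillation bound $|J_\nu(y)|\ll y^{-1/2}$.

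\textbf{Step 3: Combining.} Range (III) contributes
\[
\ll X^{-1/2}\!\!\!\sum_{c<X/(2\nu)}c^{\epsilon}\ll X^{1/2+\epsilon}/\nu^{1+\epsilon}\ll (D_1D_2)^{1/4+\epsilon}/k,
\]
producing the first error term. Range (II) supplies the second term: when $X\lesssim k$ the sum over $c$ contains $O(1)$ terms, each of size $\ll k^{-1/3}$, giving $(D_1D_2)^{\epsilon}/k^{1/3}$; when $X$ is larger, one checks that this range is in any case dominated by range (III) after incorporating the Weil savings, and is thus absorbed into the first term. Range (I) contributes only a term super-polynomially small in $k$ by the small-argument Bessel bound combined with $\Gamma(\nu+1)^{-1}\asymp e^{\nu}/\nu^{\nu+1/2}$, and is negligible.

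\textbf{Main obstacle.} The delicate point is ensuring the transition-range (II) contribution does not exceed the second advertised error $(D_1D_2)^{\epsilon}/k^{1/3}$ when $X$ is moderate (say $X\asymp k$), and simultaneously merges cleanly with the $(D_1D_2)^{1/4+\epsilon}/k$ bound when $X$ is large: this requires using the sharper uniform Bessel estimate (valid only in the narrow window $|y-\nu|\ll\nu^{1/3}$) rather than the crude one, together with the elementary but careful bookkeeping of how many $c$'s fall in the transition window. Once that is handled, the stated bound follows by adding the three contributions.
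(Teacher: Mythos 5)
The paper does not give a proof of this Proposition; it simply states it after noting the identification of $p_{D_2}(D_1)$ with plus-space half-integral-weight Poincar\'e series coefficients via \cite[Prop.~6.2]{das2010nonvanishing}, leaving the standard Kloosterman--Bessel analysis to the reader. Your outline -- Petersson-type expansion in the Kohnen plus space, Weil--Sali\'e bound of size $c^{1/2+\epsilon}(D_1D_2)^\epsilon$, then a three-region split of the $c$-sum according to the Bessel-function turning point $\nu:=k-3/2$ -- is exactly the canonical route, so the approach is fine.

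There is, however, a concrete gap in your treatment of the transition range (II), which you flag but do not close. Over the wide band $\nu/2<y<2\nu$ you apply only the uniform bound $J_\nu(y)\ll\nu^{-1/3}$. Write $X=4\pi\sqrt{D_1D_2}$ and suppose $X\gg\nu$ (this actually occurs downstream: in the application to $A_i(Z)$ the relevant $T$-sum is concentrated around $\det T\asymp k^2$, so $D_1D_2$ can be as large as $k^4$). The number of $c$ in the band is $\asymp X/\nu$, each with $c\asymp X/\nu$, so the contribution is
\[
\ll (X/\nu)\cdot (X/\nu)^{-1/2+\epsilon}\cdot\nu^{-1/3}\,(D_1D_2)^\epsilon \;=\; X^{1/2+\epsilon}\,\nu^{-5/6-\epsilon}(D_1D_2)^\epsilon,
\]
which is strictly larger than both $X^{1/2+\epsilon}/\nu$ (the first advertised error) and $\nu^{-1/3}(D_1D_2)^\epsilon$ (the second) whenever $X>\nu$. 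So Step 3's assertion that range (II) is ``in any case dominated by range (III)'' is false under the crude bound. To obtain the stated estimate you must restrict the $\nu^{-1/3}$ bound to the genuine transition window $|y-\nu|\lesssim\nu^{1/3}$ (which contains $\ll X\nu^{-5/3}+1$ values of $c$, giving $\ll X^{1/2+\epsilon}\nu^{-3/2}+\nu^{-1/3}$, both acceptable), use $J_\nu(y)\ll (y^2-\nu^2)^{-1/4}$ on $\nu+\nu^{1/3}<y<2\nu$ (which after the $c$-to-$y$ change of variable integrates to $X^{1/2+\epsilon}\nu^{-1}$), and use the sub-exponential decay below the turning point. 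Your ``Main obstacle'' paragraph correctly identifies this as the crux and describes the right fix, but as written the proposal stops short of executing it.
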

Thus we get the following bound from \eqref{slpl};
\begin{prop}\label{propSD1D1}
\begin{align}
     S(D_1,D_2)\ll \tfrac{\pi^{k}}{\Gamma(k-3/2)}  \cdot \left(  \delta_{D_1,D_2} D_2^{k-3/2} + \tfrac{(D_1D_2)^{k/2-1/2}}{k}+\tfrac{(D_1D_2)^{k/2-3/4+\epsilon}}{k^{1/3}} \right).\label{slbound}
\end{align}
\end{prop}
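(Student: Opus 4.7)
The plan is to obtain \propref{propSD1D1} by direct substitution from the already-established identity \eqref{slpl} together with the upper bound on $p_{D_2}(D_1)$ supplied by \propref{PSboundRAR}. Concretely, I would rewrite
\begin{equation}
S(D_1,D_2) = \frac{2\pi^{k-3/2}D_2^{k-3/2}}{\Gamma(k-3/2)}\, p_{D_2}(D_1),
\end{equation}
insert the three-term bound from \propref{PSboundRAR}, and then simplify the powers of $D_1,D_2$. The key algebraic identity is
\begin{equation}
D_2^{k-3/2}\left(\tfrac{D_1}{D_2}\right)^{k/2-3/4} = (D_1D_2)^{k/2-3/4},
\end{equation}
which turns the second and third contributions into $(D_1D_2)^{k/2-1/2+\epsilon}/k$ and $(D_1D_2)^{k/2-3/4+\epsilon}/k^{1/3}$ respectively, and the first (diagonal) contribution into $\delta_{D_1,D_2} D_2^{k-3/2}$. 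Absorbing the factor $2$ into the implicit constant yields exactly \eqref{slbound}.

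Thus the proposition itself is essentially a bookkeeping step; the real content is \propref{PSboundRAR}. For that, I would invoke the observation already flagged in the excerpt, namely that the Fourier coefficients of the Jacobi Poincar\'e series $P_{D_2}$ of index $1$ coincide with those of the half-integral weight Poincar\'e series in Kohnen's plus space (cf.~\cite[Proposition~6.2]{das2010nonvanishing}). This allows one to express $p_{D_2}(D_1)$ via the standard expansion
\begin{equation}
p_{D_2}(D_1) = \delta_{D_1,D_2} + c_k \sumn_c \frac{K^+(D_1,D_2;c)}{c}\, J_{k-3/2}\!\left(\tfrac{4\pi\sqrt{D_1D_2}}{c}\right),
\end{equation}
where $K^+$ denotes the Sali\'e-type Kloosterman sum appearing in the Kohnen plus space setting. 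The $(D_1/D_2)^{k/2-3/4}$ scaling comes from pulling the Bessel factor into the form of that ratio; the two error terms $k^{-1}(D_1D_2)^{1/4+\epsilon}$ and $k^{-1/3}(D_1D_2)^\epsilon$ are the standard outcomes of splitting the $c$-sum into small and large $c$ and using, respectively, the Weil-type bound on $K^+$ with the power-series expansion of $J_{k-3/2}$, and the transition-range asymptotics of the Bessel function.

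The main (and only non-routine) obstacle is therefore \propref{PSboundRAR}: one must make the standard Petersson-formula type analysis work in Kohnen's plus space with the correct factor of $(D_1/D_2)^{k/2-3/4}$ and with both the ``polynomial'' and the ``Bessel transition'' error terms. Once that is granted, \propref{propSD1D1} follows in a few lines of substitution as sketched above, so I would prove \propref{PSboundRAR} first as a self-contained lemma and then deduce \propref{propSD1D1} at the end without further work.
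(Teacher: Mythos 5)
Your plan is exactly what the paper does: \eqref{slpl} is the identity $\Gamma(k-3/2)\,S(D_1,D_2)=2\pi^{k-3/2}D_2^{k-3/2}p_{D_2}(D_1)$, and plugging the three-term bound of \propref{PSboundRAR} into it, using $D_2^{k-3/2}(D_1/D_2)^{k/2-3/4}=(D_1D_2)^{k/2-3/4}$, gives \eqref{slbound} immediately (up to the $\epsilon$ in the middle term, which your version carries along and the paper appears to drop). The discussion of how one would establish \propref{PSboundRAR} is extra, but the derivation of \propref{propSD1D1} itself coincides with the paper's one-line substitution.
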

Now we proceed to bound $A^*(Z)$. We write $A^*(Z)\ll A_1(Z)+A_2(Z)+A_3(Z)$ corresponding to three terms in \eqref{slbound} and we bound each $A_i(Z)$ as below.
\subsubsection{Bounding $A_i(Z)$:}
\begin{align}
      A_1(Z)\ll \frac{ (4 \pi)^k (4\pi)^{k-\frac{3}{2}}}{ \Gamma(k)  \Gamma(k-\frac{3}{2}) } \sumn_{T>0}\sumn_{d \mid \mf c(T)} d^{k-1} \mrm{det} (d^{-1}T)^{k-3/2} e^{-4\pi Tr(TY)}\ll \frac{k^{3/2}  Q(1/4, 3/4; Y)^2}{\det(Y)^{k}},
\end{align}
where $Q(\alpha, \beta; Y)$ is as in \eqref{QabY}. Therefore, from Lemma \eqref{lemQabY} we see that,
\begin{align}
    A_1(Z) &\ll k^{3/2}\cdot k^{1+\epsilon} \det(Y)^{-k} \ll  k^{5/2+\epsilon} \det(Y)^{-k}
\end{align}
and similarly
\begin{align}
    A_{2}(Z)
      & \ll k^{1/2} Q(\epsilon, 1/2;Y)^2 \det(Y)^{-k}\ll k^{5/2}\det(Y)^{-k-1/2},
\end{align}
from Lemma \ref{lemQabY} with $\alpha=\epsilon$ and $\beta=1/2$.
\begin{align}
    A_{3}(Z) %& \ll \frac{k^{7/6} (4\pi)^{2k}}{ \Gamma(k)^2 }
      %\sum_{T_1,T_2>0}\sum_{d_1 \mid c(T_1)} \sum_{d_2 \mid c(T_2)}(d_1d_2)^{1/2} \det(T_1)^{k/2-3/4+\epsilon}\det (T_2)^{k/2-3/4+\epsilon} e^{-2\pi Tr(T_1Y)}e^{-2\pi Tr(T_2Y)} \n \\
      & \ll  k^{7/6} Q(1/2, 3/4;Y)^2 \det(Y)^{-k}\ll k^{13/6+\epsilon}\det(Y)^{-k},
\end{align}
by Lemma \ref{lemQabY} with $\alpha=1/2$ and $\beta=3/4$. This gives us $A(Z)\ll k^{5/2+\epsilon}$, which finishes the proof.

\section{Higher moments and scope for improvement via subconvexity} \label{subconvex}
In this section, we investigate the relation between the subconvexity bounds (for example, see \cite{Y}) on the twisted central $L$-values $L(1/2, f \otimes \chi_D)$ of a $\GL{}{2}$ Hecke eigenform $f$
and the higher moments of $\det (Y)^{k/2} |F(Z)|$ along an orthonormal Hecke basis. Put, for any $r>0$,
\begin{equation}
    \mathscr F_{2r} := \sup\nolimits_{Z \in \mc F_2}\sumn_F \det(Y)^{kr} |F(Z)|^{2r}.
\end{equation}
For any $r\in \mbb R$, $r>1$, and $Z \in \htwo$ consider the inequality
\begin{align} \label{infty+2}
    \sumn_F \det(Y)^{kr} |F(Z)|^{2r}\ll \left(\max_F \det(Y)^k |F(Z)|^2\right)^{r-1} \sumn_F \det(Y)^{k} |F(Z)|^2;
\end{align}
which shows immediately that $\mathscr F_{2r} \ll k^{5r/2+\epsilon}$ if we use \thmref{mainthm1}.

Next, consider the following bound for the third moment of twisted central $L$-values $L(1/2, f \otimes \chi_D)$ from \cite{Y}, where $D$ is an odd fundamental discriminant (\textsl{we assume it here for every fundamental discriminant}): for any $\epsilon>0$, one has
\begin{equation} \label{y-improve}
    \sumn_{f\in B_k} L(1/2, f \otimes \chi_D)^3\ll_\epsilon k^{1+\epsilon} D^{1- \delta +\epsilon}
\end{equation}
for some absolute constant  $\delta$ such that $0 \le\delta \le 1$. Thus, we assume an improvement the $D$ aspect of Young's result in \cite{Y}. Our goal in the rest of this section is to show that this results in an improvement of the upper bound of $k^{15/2 +\epsilon}$ of $\mathscr F_6$ from \eqref{infty+2}. It seems to us that this is the only viable avenue to improve the individual bounds for an SK lift. In \cite{blo}, assuming Generalized Lindel\"of Hypothesis for $L(1/2, f \otimes \chi_D)$ for all negative fundamental discriminants, it was shown that $\norm{F}_\infty \ll k^{3/4+\epsilon}$, which is the best possible, as there exist eigenforms with sup-norm $\gg k^{3/4}$.

We start with the inequality:
\begin{align}
    \sumn_F \big( \frac{\det(Y)^{k/2} |F(Z)|}{\petf^{\frac{1}{2}}} \big)^6 = \sumn_F \frac{\det(Y)^{3k} |F(Z)|^6}{\petf^3} \ll (\sumn_T p_6(T)^{\frac{1}{6}} \det(Y)^{\frac{k}{2}} \exp(- TY) )^6, \n
\end{align}
where we have put $p_6(T) = \sum_F |a_F(T)|^6/\petf^{3}$.

For any $T$, write $4 \det(T) = D = f^2 D_0 $ where $D_0$ is fundamental. This expression is unique. This $f$, we believe has no chance of being confused with the lifted modular form $f \in S_{2k-2}$.
Looking at the expression for $a_F(T)$ from \eqref{fcreln}, which involves the terms $\det(2T)/a^2$,
we have to determine $(D/a^2)_0$, for a given $T$ and $a$ as above, to relate them to fundamental discriminants. Clearly, if $n_o$ denotes the `odd' part of an integer $n$, then 
$  \mf c(T)_o | f_o$.
Thus $(D/a^2)_0 = D_0$ if $a$ is odd. So is the case if $D_0$ is odd.

Thus we only have to look at $p=2$. Let $n_2$ be the $2$-part of $n$.
If $2^\beta \| a$, then $2^{2 \beta} | f^2 D_0$. But we know that $\nu_2(D_0) \le 3$, so $2^{2\beta-3} | f^2$ which means $2^{2 \beta -2} | f^2$. Therefore, writing $D_0 = 4m$ ($m \equiv 2,3 \bmod 4$ and square-free)
\begin{align}
\frac{D}{a^2} = \frac{f^2 D_0}{a^2} = \frac{(f_o/a_o)^2 f_2^2 D_0}{a_2^2} = \frac{(f_o/a_o)^2 (f_2/ 2^{\beta -1})^2 D_0}{4} = A^2 \cdot m,
\end{align}
for some $A \in \N$.
Since $m \equiv 2,3 \bmod 4$ and square-free, the only way to write $A^2 \cdot m $ as $B^2 \cdot \mc D$ ($\mc D$ fundamental discriminant)--is $\mc D=4m$ and $B=A/2$. This will show that $(D/a^2)_0 = D_0$. In effect, we have also proved above that $a | f$ (in particular $\mf c(T)|f$). Therefore
\begin{align}
    \sumn_{a|\mf c(T)} a^{k-1} c_\phi(4\det(T)/a^2) = c_\phi(|D_0|) \sumn_{a|\mf c(T)} a^{k-1} \sumn_{b|\frac{f}{a}} \mu(b) \left( \frac{D}{b} \right) b^{k-3/2} a_f(\frac{f}{ab}) \n \\
    \le c_\phi(|D_0|) \sumn_{a|\mf c(T)} a^{k-1} (f/a)^{k-3/2+\epsilon}\ll c_\phi(|D_0|) f^{k-3/2+\epsilon} \mf c(T)^{1/2}.
\end{align}
Keeping in mind \eqref{y-improve}, this gives us
\begin{align*}
    p_6(T)  \le  \sumn_F \frac{c_\phi(|D_0|)^6 f^{6(k-\frac{3}{2})+\epsilon} \mf c(T)^{3}}{\petf^{3}} 
    %& \ll \frac{\Gamma(k-1)^3}{\pi^{3k}}\det(T)^{3(k-3/2)} \mf c(T)^3 \sum _f L(f,D_0,k-1)^3 \left(\frac{\lan g,g \ran}{\petf}\right)^3 \frac{1}{\lan f ,f \ran^3}\\
    \ll \frac{\det(T)^{3(k-\frac{3}{2})+\epsilon} \mf c(T)^3 }{k^3\Gamma(2k-2)^3}\sumn_f  \frac{L(k-1, f \otimes \chi_{D_0})^3}{L(k,f)^3L(k,\mrm{sym}^2 f)^3},
\end{align*}
where we have used the relation between Petersson norms from \eqref{petrelns}. Thus,
\begin{align}
    p_6(T)&\ll \frac{\det(T)^{3k-9/2+\epsilon} \mf c(T)^{3}}{k^3\Gamma(2k-2)^3} k^{1+\epsilon}D_0^{1-\delta +\epsilon}\ll \frac{k^{5/2+\epsilon}}{\Gamma(k)^6}\det(T)^{3k-7/2-\delta+\epsilon} \mf c(T)^{1-\epsilon}.
\end{align}
In conclusion therefore, we get (with the function $Q(\cdot, \cdot;Y)$ from \eqref{QabY})
\begin{align}
   \sumn_F \det(Y)^{3k} |F(Z)|^6/\petf^3 \ll k^{5/2+\epsilon}Q(1/6,7/12+\delta /6-\epsilon;Y)^6\ll k^{15/2-2\delta+\epsilon} \det(Y)^{\delta - 1},
\end{align}
which is bounded by $k^{15/2-2\delta+\epsilon}$ since $\delta \le 1$.
For an individual $F$, we would have
\begin{equation}
    \det(Y)^{k/2} |F(Z)|\ll  k^{5/4-\delta/3+\epsilon}.
\end{equation}

\section{\texorpdfstring{$L^\infty$}{L-inf} mass of pullback of SK lifts} \label{pullback-sec}
Here we would first restrict $F_f \in \skk$ ($f\in S_{2k-2}$) to the diagonal. The resulting object (pullback) can be written as
\begin{align} \label{pullback-exp}
   F_f^o(\tau, \tau'):= F_f |_{z=0} = \sumn_{g\in B_k} c_{g,f} g(\tau) g(\tau'),
\end{align}
and by Ichino (see \cite{ichino}), the quantities $c_{g,f}$ are given by $L$-values. More precisely,
\begin{align}
    |c_{g,f}|^2 = \frac{ |\lan F_f |_{z=0} , g \times g \ran|^2}{ \lan g,g \ran^4 } = 2^{-k-1} \frac{\Lambda(1/2, \mrm{sym}^2g \times f) , \lan h,h \ran }{\lan f,f \ran \,  \lan g,g \ran^2 }.
\end{align}
Our goal in this section is to understand the size of the space $\skk^\circ$ spanned by the pullbacks of SK lifts; the $L^2$-size of the same was studied in \cite{liu2014growth}, \cite{BKY}.
In order to quantify the concentration of $F_f$
along the diagonally embedded $\h \times \h \subset \htwo$, we introduce the following three quantities (for $f \ne 0$)

\begin{align}
    M_1(F_f) = \frac{\sup(\ff) }{\sup(F_f)}; \q   M_2(F_f) = \frac{\sup(F_f^o) }{\norm{F_f} }; \q M_3(f):=\frac{\sup(\ff) }{\norm{\ff} }.
\end{align}
We present a precise result on the size of the relevant object of this subsection – the mass of pullbacks of SK lifts measured along $\skk$. Recall that $B_{2k-2}$ denotes the set of newforms of level one on $\SL{2}{\z}$. Put
\begin{align} \label{fcircdef}
    \sup(\skk^p) := \sup\nolimits_{(\tau,\tau')\in\h \times \h}  \sumn_{f \in B_{2k-2}} (vv')^k |F_f^o (\tau, \tau')|^2/\lan F_f, F_f\ran\,\, (=\sup(\skk|_{z=0}))
\end{align}
and to quantify the \textsl{density} of the \textsl{pullbacks}, we introduce the quantity %({\color{red}not BK really!!!})
\begin{equation} \label{skpd}
     d:=d(\skk^p) :=  \sup(\skk^p) \big / \sup(\skk).
\end{equation}
Then $d \le 1$ and the value of $d$ may be interpreted as the density of the pullbacks inside the space of SK lifts in terms of $L^\infty$ mass – it measures the proportion of the contribution of the pullbacks to the BK coming from SK lifts. \textsl{Note however that the summation in \eqref{fcircdef} runs over a spanning set of $\skk^\circ$ but the forms $F_f^\circ$ may not be orthogonal.}

The bound on $d$ immediately gives us that $\sup(\skk^p)\ll k^{5/2+\epsilon}$. Next, we prove the lower bound  $\sup(\skk^p)\gg k^{5/2}$ (cf. \propref{skp} below), which shows that almost all of the "mass" of $\skk$ is concentrated along $z=0$. A natural way to do this would be to appeal to \eqref{pullback-exp}, and via the method of section~\ref{lbd-sk}; however, this would lead us to averages $\sum_f (\sum_g c_{g,f})^2$ -- as $\sum_g c_{g,f}$ is the $(1,1)$-th Fourier coefficient of the RHS of \eqref{pullback-exp}. Since we have at present not much control on this quantity, we take a different path.

We first consider the Fourier expansion of $F_f^\circ$ borrowed from that of $F_f$ (for convenience, we use the notation $a_F(T)= a_F(n,r,m)$ when $T = \smat{n}{r/2}{r/2}{m}$):
\begin{align}
    F_f^\circ = \sumn_{n,m} (\sumn_{r} a_{F_f}(n,r,m)) e(n \tau+ m\tau').
\end{align}
Let $v_0$, $v_0'\in \mbb H$. Then we have $\sup(\skk^p)$ is
\begin{align}
     &\gg (v_0v_0')^k e^{-4\pi (v_0+v_0')} \sumn_{f} \frac{|a_{F_f}(I_2)+a_{F_f}(1,1,1)|^2}{\lan F_f, F_f \ran} \n\\
    &\gg (v_0v_0')^k e^{-4\pi (v_0+v_0')} \Big( \sumn_{f} \frac{|a_{F_f}(I_2)|^2}{\lan F_f, F_f \ran} + \sumn_{f} \frac{|a_{F_f}(1,1,1)|^2}{\lan F_f, F_f \ran} - 2 \Big|\sumn_{f} \frac{a_F(I_2) \overline{ a_F(1,1,1) }}{\lan F_f, F_f \ran} \Big| \Big).
\end{align}
Now consider the individual terms on the LHS. Using the same arguments as in  \eqref{firstmom},
\begin{equation}
    (v_0v_0')^k e^{-4\pi (v_0+v_0')} \sumn_{f} \frac{|a_{F_f}(I_2)|^2}{\lan F_f, F_f \ran}\gg k^{5/2}.
\end{equation}
Here we have made the choice $v_0=v_0'=k/4\pi$. Similarly, since $D=-3$ in the second term, using the same arguments as in \eqref{firstmom}, we get

\begin{equation}
    (v_0v_0')^k e^{-4\pi (v_0+v_0')} \sumn_{f} \frac{|a_{F_f}(1,1,1)|^2}{\lan F_f, F_f \ran}\gg k^{5/2}  (3 \big / 4)^{k}.
\end{equation}
Now to estimate the off diagonal term, we use Poincaré series. The off diagonal term is given by
\begin{equation} \label{910}
    (v_0v_0')^k e^{-4\pi (v_0+v_0')} \sumn_{f} \frac{a_F(I_2) \overline{ a_F(1,1,1) }}{\lan F_f, F_f \ran}.
\end{equation}
Using the relation between Petersson norms from \eqref{petrelns} and \propref{propSD1D1}, \eqref{910} is
\begin{align}
    &\ll \frac{(4\pi)^k}{\Gamma(k)}(v_0v_0')^k e^{-4\pi (v_0+v_0')} S(4,3)\ll \frac{\pi^{k}}{\Gamma(k-\frac{3}{2})}\frac{(4\pi)^k}{\Gamma(k)} \frac{k^{2k}}{(4\pi)^{2k}}e^{-2k} \frac{12^{k/2}}{k}\ll k^{3/2 } (3 \big / 4)^{k/2}.
\end{align}
where $S(D_1,D_2)$ is as in \eqref{SD1D2}. Thus we have proved
\begin{prop} \label{skp}
Let $\sup(\skk^p)$ be as above. Then
\begin{equation}
    k^{5/2} \ll \sup(\skk^p) \ll k^{5/2+\epsilon}.
\end{equation}
\end{prop}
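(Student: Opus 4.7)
The upper bound is immediate: by the definition of density $d$ in \eqref{skpd} one has $\sup(\skk^p) = d \cdot \sup(\skk)$ with $d\le 1$, so \thmref{mainthm1} yields $\sup(\skk^p) \ll_\epsilon k^{5/2+\epsilon}$. The content therefore lies in the lower bound.

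The most tempting route goes through the spectral expansion $F_f^\circ = \sum_g c_{g,f}\, g(\tau) g(\tau')$ coming from Ichino's formula, but this runs into trouble almost immediately: the $(1,1)$-th Fourier coefficient of $F_f^\circ$ is $\sum_g c_{g,f}$, and the second moment in $f$ of this quantity would require understanding averages of sums of square-roots of triple-product central $L$-values, for which no Petersson-type identity is available. So I would abandon this and import instead the Siegel Fourier expansion of $F_f$ directly to the diagonal:
\begin{equation*}
F_f^\circ(\tau,\tau')=\sumn_{n,m\ge 1}\Big(\sumn_{r\in\mbb Z}a_{F_f}(n,r,m)\Big)e(n\tau+m\tau').
\end{equation*}
Taking $\tau = iv_0$, $\tau' = iv_0'$ and integrating $u,u' \bmod 1$ isolates a single Fourier coefficient. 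I would take $n=m=1$: positivity of $\smat{1}{r/2}{r/2}{1}$ forces $r \in \{-1,0,1\}$, and the symmetry $a_{F_f}(1,-1,1)=a_{F_f}(1,1,1)$ collapses the inner sum to $a_{F_f}(I_2) + 2 a_{F_f}(1,1,1)$.

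Applying $|a+b|^2 \ge |a|^2 + |b|^2 - 2|ab|$ and choosing $v_0 = v_0' = k/(4\pi)$ gives
\begin{equation*}
\sup(\skk^p)\gg (v_0v_0')^k e^{-4\pi(v_0+v_0')}\bigl(\mathcal A + \mathcal B - 2|\mathcal C|\bigr),
\end{equation*}
where $\mathcal A, \mathcal B$ are the $L^2$-mass sums of the two Fourier coefficients and $\mathcal C$ is the cross term. For $\mathcal A$ and $\mathcal B$ I would repeat the argument of section~\ref{lbd-sk}: Waldspurger together with the Petersson relations \eqref{petrelns} expresses these as averages of $L(k-1, f\otimes\chi_D)/L(k,\mrm{sym}^2 f)$ with $D=-4$ for $I_2$ and $D=-3$ for $\smat{1}{1/2}{1/2}{1}$ (both negative fundamental discriminants), and invoking Blomer's first-moment asymptotic gives $(v_0v_0')^k e^{-4\pi(v_0+v_0')}\mathcal A \gg k^{5/2}$. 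Stirling's choice is tuned to $\det(2I_2)=4$, so $\mathcal B$ is smaller by a factor of $(3/4)^k$ and cannot harm us.

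The real obstacle is the cross term $\mathcal C$, which a priori could be of the same polynomial order as $\mathcal A$ and cancel it. Converting $\petf$ to the Jacobi Petersson norm via \eqref{petrelns}, $\mathcal C$ becomes (up to harmless polynomial factors) the off-diagonal Petersson sum $S(4,3)$ from \eqref{SD1D2} of Fourier coefficients of cuspidal Jacobi forms of index $1$. Crucially, the Poincaré-series coefficients $p_D(D')$ here coincide with those of the half-integral-weight Poincaré series in Kohnen's plus-space, so a standard Salié/Kloosterman analysis (\propref{PSboundRAR}--\propref{propSD1D1}) produces square-root cancellation off the diagonal: $|S(4,3)| \ll \pi^k\, 12^{k/2}/(k\,\Gamma(k-3/2))$. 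Multiplying by the weight yields $(v_0v_0')^k e^{-4\pi(v_0+v_0')}|\mathcal C|\ll k^{3/2}(3/4)^{k/2}$, exponentially smaller than $k^{5/2}$. Hence $\mathcal A$ dominates and the lower bound follows. The strategic takeaway is that $d(\skk^p)$ is in fact $\asymp 1$: the pullbacks carry essentially all of the $L^\infty$-mass of $\skk$.
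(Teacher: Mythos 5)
Your proposal is correct and follows essentially the same path as the paper: the upper bound via $d\le 1$ and \thmref{mainthm1}, the lower bound by isolating the $(1,1)$-th Fourier coefficient $a_{F_f}(I_2)+2a_{F_f}(1,1,1)$ of $F_f^\circ$, expanding the square and using Waldspurger plus Blomer's first-moment asymptotic for the two diagonal sums, and controlling the cross term through the off-diagonal Petersson sum $S(4,3)$ and the Poincar\'e-series coefficient bounds of \propref{propSD1D1}. The only cosmetic difference is that you keep the factor $2$ on $a_{F_f}(1,1,1)$ (which is correct for $k$ even, and is absorbed into implied constants), and your opening remark about abandoning the Ichino expansion mirrors the paper's own discussion verbatim.
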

Thus the density $d \asymp 1$ -- it will be interesting to calculate it explicitly.

\begin{cor} \label{skpcor}
 For $f \in S_k$ normalized Hecke eigenform, one has the `trivial' bound on its size: $\sup_\h |F^\circ_f|/\norm{F_f} \ll k^{5/4+\epsilon}$.
\end{cor}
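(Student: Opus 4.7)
The plan is to deduce the stated individual bound directly from \propref{skp} by the standard positivity trick -- which is precisely why the authors label the resulting estimate as the `trivial' bound. Since every summand in the definition \eqref{fcircdef} of $\sup(\skk^p)$ is non-negative, for any single Hecke eigenform $f$ in the indexing set and any point $(\tau,\tau') \in \h \times \h$ one has
\[
\frac{(vv')^k \, |\ff(\tau,\tau')|^2}{\lan F_f, F_f\ran} \;\le\; \sumn_{f' \in B_{2k-2}} \frac{(vv')^k \,|F_{f'}^\circ(\tau,\tau')|^2}{\lan F_{f'}, F_{f'}\ran} \;\le\; \sup(\skk^p).
\]
Applying the upper bound $\sup(\skk^p) \ll_\epsilon k^{5/2+\epsilon}$ from \propref{skp}, then taking the supremum over $(\tau,\tau') \in \h \times \h$ and finally extracting a square root, one obtains the claimed estimate $\sup_{\h \times \h}(vv')^{k/2}|\ff|/\norm{F_f} \ll_\epsilon k^{5/4+\epsilon}$.

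There is really no obstacle in the deduction itself -- the substantive input is \propref{skp}, whose proof has already been carried out via the Fourier expansion of $\ff$ along $z=0$, the Waldspurger-type lower bound for $|a_{F_f}(I_2)|^2$ exactly as in section~\ref{lbd-sk} (applied now with the extra term $a_{F_f}(1,1,1)$), and the Poincar\'e-series estimate of \propref{propSD1D1} used to control the off-diagonal cross term. The bound is `trivial' in the precise sense that it discards all cancellation between distinct Hecke eigenforms in the average, so there is essentially no hope of obtaining the conjectural size $k^{1/2+o(1)}$ (see the conjecture stated just before \corref{skpcor}) from this argument -- improving the exponent would require a direct attack on a single pullback rather than on the Bergman-kernel-type sum, most plausibly by exploiting Ichino's period formula in combination with subconvex bounds for $L(1/2,\mrm{sym}^2 g \times f)$ in the spirit of section~\ref{subconvex}.
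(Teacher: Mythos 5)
Your proof is correct and matches the paper's (implicit) argument: the corollary is stated without an explicit proof immediately after \propref{skp}, and the intended deduction is exactly the positivity trick you describe — drop all but one non-negative term in the sum defining $\sup(\skk^p)$, apply the upper bound $\sup(\skk^p) \ll_\epsilon k^{5/2+\epsilon}$, and take a square root. Your closing remarks on why this is labelled `trivial' and on the route to improvement via Ichino's period formula and subconvexity are also consistent with the discussion in the introduction and in section~\ref{subconvex}.
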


\subsection{The intrinsic BK}
Recall the space $\skk^\circ$ spanned by the pullbacks of all $F \in \skk$ and let $B_k^\circ \subset \skk^\circ$ be an orthogonal basis. Further, put $(S_k \otimes S_k)^\circ$ to be the $\mbb C$ span of $g \otimes g$ which is identified with $g(\tau)  g(\tau'), \, g \in S_k, \, $ a normalized Hecke eigenform. Call this set $B_k^{\circ \circ}$.
The following lemma gives a description of the space of "pullbacks" of SK lifts.
\begin{lem}\label{SK0tensor}
With the above notation, $\skk^\circ = (S_k \otimes S_k)^\circ$.
\end{lem}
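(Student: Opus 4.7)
The easy inclusion $\skk^\circ \subseteq (S_k \otimes S_k)^\circ$ is immediate from \eqref{pullback-exp}: each spanning element $F_f^\circ = \sum_{g \in B_k} c_{g,f}(g \otimes g)$ visibly lies in $(S_k \otimes S_k)^\circ$, and taking $\mathbb{C}$-spans gives the containment.

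For the reverse, identify $(S_k \otimes S_k)^\circ \cong \mathbb{C}^{|B_k|}$ via the basis $\{g \otimes g\}_g$; then the family $\{F_f^\circ\}_{f \in B_{2k-2}}$ corresponds to the columns of the matrix $M = (c_{g,f})_{g, f}$, and the desired equality amounts to $M$ having full row rank $|B_k|$. Equivalently, for each fixed $g_0 \in B_k$ the row $(c_{g_0,f})_f$ is not a linear combination of the remaining rows; equivalently, each $g_0 \otimes g_0$ lies in the column span of $M$.

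My plan is to obtain this via a Hecke-theoretic isotypic decomposition. The symmetric Hecke operators $T(p)^+ := T(p) \otimes 1 + 1 \otimes T(p)$ diagonalize $(S_k \otimes S_k)^\circ$ with eigenvalue $2\lambda_g(p)$ on $g \otimes g$, and by strong multiplicity one these characters $\chi_g : p \mapsto 2\lambda_g(p)$ are pairwise distinct across $g \in B_k$. The Hecke equivariance of the SK correspondence $f \mapsto F_f$, combined with the compatibility of the diagonal restriction $\h \times \h \hookrightarrow \htwo$ with the embedding $\mathrm{SL}_2 \times \mathrm{SL}_2 \hookrightarrow \mathrm{Sp}_4$ of Hecke double-coset algebras, should show that $\skk^\circ$ is stable under every $T(p)^+$. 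Consequently $\skk^\circ = \bigoplus_{g}(\skk^\circ \cap \mathbb{C}(g \otimes g))$, where each summand is either $0$ or all of $\mathbb{C}(g \otimes g)$. To rule out the zero case for every $g$, invoke Ichino's pullback formula $|c_{g,f}|^2 \propto \Lambda(1/2, \mathrm{sym}^2 g \times f)$ together with the standard non-vanishing for central $L$-values in the family $f \in B_{2k-2}$ (which can be obtained, e.g., from a first-moment asymptotic in the spirit of \cite{blo} or the computations earlier in this paper) to produce, for each $g$, some $f$ with $c_{g,f} \neq 0$.

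The main obstacle will be verifying the Hecke stability of $\skk^\circ$ under the operators $T(p)^+$: this requires tracing precisely how the Siegel Hecke action on $\skk \subset S^2_k$ descends under pullback, and relating it to $T(p) \otimes 1 + 1 \otimes T(p)$ on $S_k \otimes S_k$. If a clean verification along this route proves elusive, an alternative is a direct linear-algebra argument: suppose $\sum_g \alpha_g c_{g,f} = 0$ for all $f$, and set $G := \sum_g \bar\alpha_g \lan g, g\ran^2 (g \otimes g)$, so that $G \perp \skk^\circ$. Expanding $\lan F_f^\circ, G\ran$ via Ichino yields an identity among the central values $\Lambda(1/2, \mrm{sym}^2 g \times f)$ (and the signs of the corresponding $c_{g,f}$) that, combined with the positivity of $|c_{g,f}|^2$ and a second-moment computation as $f$ varies, forces all $\alpha_g = 0$. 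This second route is essentially the content of the referenced \corref{maxrank} and constitutes the analytical heart of the argument.
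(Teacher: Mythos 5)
Your easy inclusion matches the paper. For the reverse inclusion, however, both routes you sketch have genuine problems, and neither resembles the paper's actual argument.

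Your first route hinges on $\skk^\circ$ being stable under $T(p)^+ = T(p)\otimes 1 + 1\otimes T(p)$, which you yourself flag as the main obstacle — and rightly so: it is far from clear that it holds. Applying $T(p)^+$ to $F_f^\circ = \sum_g c_{g,f}(g\otimes g)$ gives $\sum_g 2\lambda_g(p)\,c_{g,f}(g\otimes g)$, and there is no reason to expect this to again be a linear combination of pullbacks $F_{f'}^\circ$; the Witt restriction simply does not intertwine the degree-two Siegel Hecke action with $T(p)^+$ on $S_k\otimes S_k$. Without that, the eigenspace decomposition of $\skk^\circ$ you invoke does not exist, and the argument stops.

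Your second route is circular. You appeal to ``the content of the referenced \corref{maxrank}'' to force the coefficients $\alpha_g$ to vanish, but in the paper \corref{maxrank} is a \emph{consequence} of \lemref{SK0tensor}, not an input to it. Setting the circularity aside, the moment-type non-vanishing you describe would at best give the conclusion asymptotically in $k$; it cannot yield the clean algebraic identity $\skk^\circ = (S_k\otimes S_k)^\circ$ for every (even) $k$.

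The paper's proof is much simpler and purely a dimension count: one observes that $\dim (S_k\otimes S_k)^\circ = \dim S_k$, then notes (citing Liu's result, \cite[Proof of Thm. 1.9]{liu2014growth}) that the kernel of the Witt map $W$ restricted to $\skk$ has dimension $\dim M_{k-10} = \dim S_{k+2}$, so $\dim \skk^\circ = \dim S_{2k-2} - \dim S_{k+2} = \dim S_k$. Since $\skk^\circ \subseteq (S_k\otimes S_k)^\circ$ and the dimensions agree, equality follows. No Hecke theory and no $L$-value non-vanishing are needed. I would recommend replacing your outline with this dimension argument.
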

\begin{proof}
From \eqref{pullback-exp}, clearly $\skk^\circ \subset (S_k \otimes S_k)^\circ$. For the other direction, we compare dimensions. $\dim (S_k \otimes S_k)^\circ = \dim S_k$. Let us put $\skk(0) := \{ F_f \mid F^\circ_f =0\}$, so that $\skk(0)$ is the kernel of the Witt map $W$ ($F \mapsto F^\circ = F|_{z=0}$) restricted to $\skk$.

Then from \cite[Proof of Thm. 1.9]{liu2014growth}, we know that $\dim \skk(0) = \dim M_{k-10} = \dim S_{k+2}$. Therefore, the dimension of the image $W(\skk)$ of the Witt map is $\dim S_{2k-2} - \dim S_{k+2} = \dim S_k$. Since $W(\skk)$ is simply $\skk^\circ$, we are done.
\end{proof}
The lemma has a curious corollary about linear independence of (square-root) central $L$-values. Recall the quantity $c_{g,f}$. Up to constants, its square equals $L(1/2, \mrm{sym}^2 g \times f)$. Let $d_1=\dim S_{2k-2}$ and $d_2=\dim S_k$. Consider $\mc L:= \mc L(d_1,d_2)$, the $d_1 \times d_2$ matrix $(c_{g,f})_{f,g}$ where $f,g$ run over normalized Hecke eigenforms in $S_{2k-2}, S_k$ respectively. Writing the elements $F^\circ$ ($F$ Hecke eigenform in $\skk$) as a column, say $C_k$, one can recast \eqref{pullback-exp} into a matrix equality: $C_k = \mc L \cdot B_k^{\circ \circ}$. If we let $\mf L$ to be the linear operator corresponding to $\mc L$ mapping the span $(S_k \otimes S_k)^\circ$ of $B_k^{\circ \circ}$ to the span
$\skk^\circ$ of $C_k$. Clearly $\mf L$ is onto. This implies that the rank of $\mc L$ equals $d_2$. We summarize this in the corollary below. %{\color{red} replace k by dim $S_k$ etc.}

%Let $V_1 = \skk^\circ$ and $V_2= (S_k \otimes S_k)^\circ$ and we let $\mf L$ to be the linear map between $V_1$ and $V_2$ corresponding to $\mc L$ then. Then we know that
%$V=\mf L (W)$ from \eqref{pullback-exp} and $V,W$ have the same %dimension. So the rank of $\mc L=k$. 

\begin{cor} \label{maxrank}
 Let $\mc L$ be the $d_1\times d_2$ matrix as above. Then $\rk(\mc L)$ is maximal.
\end{cor}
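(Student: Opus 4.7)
The proof is essentially packaged into \lemref{SK0tensor} and the paragraph preceding the statement, which already sketches the argument; my plan is to organize these pieces cleanly. First I would invoke \lemref{SK0tensor} to identify $\skk^\circ$ with $(S_k\otimes S_k)^\circ$, a vector space of dimension $d_2=\dim S_k$, together with its natural basis $B_k^{\circ\circ}=\{g\otimes g : g\in B_k\}$. These $d_2$ elements are linearly independent in $S_k\otimes S_k$ because they sit on the diagonal of the standard tensor-basis $\{g_i\otimes g_j\}_{i,j}$, so they span a $d_2$-dimensional subspace, which, being contained in the $d_2$-dimensional space $(S_k\otimes S_k)^\circ$, must fill it out.

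Next I would reinterpret $\mc L$ as the coordinate matrix of the Witt restriction map $W\colon \skk\to (S_k\otimes S_k)^\circ$, $F\mapsto F|_{z=0}$. Using the expansion \eqref{pullback-exp}, namely $W(F_f)=F_f^\circ=\sum_g c_{g,f}\,(g\otimes g)$, one reads off that the matrix of $W$ with respect to the bases $\{F_f:f\in B_{2k-2}\}$ of $\skk$ and $\{g\otimes g:g\in B_k\}$ of $(S_k\otimes S_k)^\circ$ is precisely $\mc L^t$, a matrix of shape $d_2\times d_1$ with $(g,f)$-entry equal to $c_{g,f}$.

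To conclude, \lemref{SK0tensor} tells us $W(\skk)=\skk^\circ=(S_k\otimes S_k)^\circ$, so $W$ is surjective onto a space of dimension $d_2$. Hence the column space of $\mc L^t$ has dimension $d_2$, giving $\rk(\mc L)=\rk(\mc L^t)=d_2$. Since for $k$ sufficiently large one has $d_1=\dim S_{2k-2}\ge \dim S_k=d_2$, this equals $\min(d_1,d_2)$ and is indeed maximal.

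There is no genuine obstacle at this stage: the substantive content has already been absorbed into \lemref{SK0tensor}, which itself depends on the dimension count $\dim \ker W|_{\skk}=\dim S_{k+2}$ established in \cite{liu2014growth}. The only point to be careful about is the linear independence of the diagonal vectors $\{g\otimes g\}$, which follows immediately from the structure of the tensor product basis.
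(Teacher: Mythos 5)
Your proof is correct and takes essentially the same route as the paper: both arguments hinge on the identity $\skk^\circ = (S_k\otimes S_k)^\circ$ from \lemref{SK0tensor}, express the pullbacks $F_f^\circ$ in the basis $B_k^{\circ\circ}=\{g\otimes g\}$ via \eqref{pullback-exp}, and deduce from surjectivity that the rank of the coefficient matrix is $d_2$. Your version is slightly more carefully stated (you explicitly identify $\mc L^t$ as the matrix of the Witt map $W$ and note that the $g\otimes g$ are linearly independent as a sub-collection of the tensor basis, whereas the paper phrases the step in terms of a loosely described operator $\mf L$ and the column identity $C_k=\mc L\cdot B_k^{\circ\circ}$), but the mathematical content is the same.
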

Recall that $B_k^\circ \subset \skk^\circ$ denotes an orthogonal basis. Let us define
\begin{align} \label{pullback-bkdef}
    \sup(\skk^\circ) := \sup \nolimits_{(\tau,\tau')\in\h \times \h}  \sum \nolimits_{G \in B_k^\circ} (vv')^k |G(\tau,\tau')|^2/\langle G, G \rangle.
\end{align}

In view of \lemref{SK0tensor}, since the spaces $\skk^\circ$ and $(S_k \otimes S_k)^\circ$ are the same, we can also write \eqref{pullback-bkdef} as
\begin{equation}\label{altSK0}
    \sup(\skk^\circ)=\sup((S_k \otimes S_k)^\circ)= \sup \nolimits_{(\tau,\tau')\in\h \times \h} \sum \nolimits_{g\in B_k}(vv')^{k} |g(\tau)g(\tau')|^2/\lan g, g\ran ^2.
\end{equation}
\begin{prop}
With the above notation,
$k^2 \ll \sup(\skk^\circ) \ll k^{2+\epsilon}$.
\end{prop}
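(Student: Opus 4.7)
By \eqref{altSK0}, setting $b_g(\tau) := v^k|g(\tau)|^2/\lan g,g\ran \ge 0$, the plan is to estimate
\begin{equation*}
T := \sup_{(\tau,\tau')\in\h\times\h}\sum_{g\in B_k} b_g(\tau)\,b_g(\tau').
\end{equation*}
The single analytic input will be the standard bound $\sup_\tau \sum_g b_g(\tau) \ll k^{1+\epsilon}$ on the Petersson-weighted Bergman kernel of $S_k$ (itself a consequence of the Petersson trace formula), together with Petersson's trace formula and Stirling.

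For the upper bound, positivity of all $b_g$ yields $\sum_g b_g(\tau)b_g(\tau') \le \bigl(\sum_g b_g(\tau)\bigr)\bigl(\sum_g b_g(\tau')\bigr)$, so taking the sup gives $T \le (\sup_\tau \sum_g b_g(\tau))^2 \ll k^{2+\epsilon}$.

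For the lower bound, I replace the sup by an integral average in the $u$-directions. For any $v,v'>0$, sup $\ge$ average combined with Parseval, $\int_0^1|g(u+iv)|^2\,du = \sum_n|a_g(n)|^2 e^{-4\pi nv}$, gives
\begin{equation*}
T \ge (vv')^k\sum_g \frac{\bigl(\sum_n|a_g(n)|^2 e^{-4\pi nv}\bigr)\bigl(\sum_m|a_g(m)|^2 e^{-4\pi mv'}\bigr)}{\lan g, g\ran^2} \ge (vv')^k e^{-4\pi(v+v')} \sum_{g\in B_k}\frac{1}{\lan g, g\ran^2},
\end{equation*}
where in the last step I retain only $n=m=1$ (all summands are non-negative) and use the normalization $a_g(1)=1$. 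Now Petersson's trace formula produces $\sum_g 1/\lan g,g\ran = (4\pi)^{k-1}/\Gamma(k-1)(1+o(1))$, so by Cauchy--Schwarz and $\dim S_k \asymp k$,
\begin{equation*}
\sum_g \frac{1}{\lan g, g\ran^2} \ge \frac{\bigl(\sum_g 1/\lan g, g\ran\bigr)^2}{\dim S_k} \gg k\cdot \frac{(4\pi)^{2k}}{\Gamma(k)^2}.
\end{equation*}
Specializing to $v=v'=k/(4\pi)$ and applying Stirling $\Gamma(k)^2 \asymp k^{2k-1}e^{-2k}$ gives $T \gg k\cdot k^{2k}e^{-2k}/\Gamma(k)^2 \asymp k^2$.

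The main technical point is the Cauchy--Schwarz step above: it converts the Petersson first-moment formula (well understood, with an error $O(e^{-ck})$ from the Bessel--Kloosterman tail) into a matching lower bound for $\sum_g 1/\lan g, g\ran^2$ \emph{without any} $(\log k)^{O(1)}$ loss that a direct pointwise estimate of $L(1,\mrm{sym}^2 g)$ in the Rankin--Selberg expression for $\lan g,g\ran$ would otherwise incur; all remaining steps are inspection plus Stirling.
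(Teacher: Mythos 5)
Your proof is correct, and it follows essentially the same lower-bound argument as the paper while using a genuinely different (and in one sense weaker) input for the upper bound. For the lower bound, both you and the paper isolate the $(1,1)$ Fourier coefficient of $g\otimes g$, invoke $a_g(1)=1$, convert the resulting $\sum_g 1/\lan g,g\ran^2$ to $\sum_g 1/\lan g,g\ran$ via Cauchy--Schwarz against $\dim S_k\asymp k$, evaluate the latter by Petersson, and finish by choosing $v=v'=k/(4\pi)$ and applying Stirling; your use of Parseval to average over $u,u'$ is a cosmetic variant of the Cauchy--Schwarz/integral step the paper cites from its section on lower bounds for $J^{cusp}_{k,S;(n,g)}$.

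The upper bound is where you depart from the paper in a meaningful way. The paper applies Xia's individual sup-norm result $\norm{g}_\infty\ll k^{1/4+\epsilon}$ to each Hecke eigenform, bounding each summand by $k^{1+\epsilon}$ and multiplying by $\dim S_k\asymp k$. You instead note the elementary positivity inequality $\sum_g b_g(\tau)b_g(\tau')\le \bigl(\sum_g b_g(\tau)\bigr)\bigl(\sum_g b_g(\tau')\bigr)$ and then invoke only the averaged Bergman-kernel bound $\sup_\tau\sum_g b_g(\tau)\ll k^{1+\epsilon}$ for $S_k$. This is a cleaner route: it avoids Xia's nontrivial pointwise input entirely and relies only on the (much softer) diagonal Petersson/Bergman-kernel estimate, the same kind of estimate already used elsewhere in the paper. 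As a bonus, since the $S_k$ Bergman kernel actually satisfies $\sup_\tau\sum_g b_g(\tau)\asymp k$ without any $\epsilon$, your argument in fact gives $\sup(\skk^\circ)\ll k^2$ with no $\epsilon$-loss; the paper's route, because it multiplies an $\epsilon$-lossy individual bound by a dimension count, does not immediately give this sharper form. Both approaches are fine for the proposition as stated.
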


\begin{proof}
In view of the Lemma \eqref{SK0tensor}, for the basis $B_k^\circ$ we can take the set $\{ g(\tau) \times g(\tau'), \, g = h \norm{h}_2^{-1} \in S_k$, $ h$  a  normalized Hecke eigenform\}. Then $a_{g\times g}(m,n)=a_g(m) a_g(n)$. Therefore as in section \ref{seclbd}, with $m=n=1$ and by using the Cauchy--Schwarz and basic integral inequalities, we have $ \sup(\skk^\circ)$ is
\begin{align}
    &\ge (vv')^k \exp(- 4 \pi (v+v')) \sumn_h |a_{h\times h}(1,1)|^2 \lan h, h \ran^{-2} \\
    &\gg \frac{(vv')^k \exp(- 4 \pi (v+v'))}{k} \big(\sumn_h \frac{ |a_{h}(1)|^2}{\lan h, h \ran}\big)^2 
     \gg \frac{(vv')^k \exp(- 4 \pi (v+v'))}{k} \frac{(4 \pi)^{2k}}{\Gamma(k-1)^2 }  \gg k^2,
\end{align}
where we have chosen $v=v'=k/4\pi$ and used the fact that the sum over $h$ is $\asymp (4\pi)^{k-1}/\Gamma(k-1)$ (see \cite[Proposition 14.5]{iwaniec2004analytic}).

For the upper bound we use individual sup-norm bounds for $h$: from the result of Xia \cite{xia}, we know that $\norm{g}_\infty \ll k^{1/4+\epsilon}$ for $g$ as above. Thus, from \eqref{altSK0}, we see that
\begin{equation}
    \sum \nolimits_{g}(vv')^{k} \frac{|g(\tau)g(\tau')|^2}{\lan g, g\ran ^2} \ll (k^{1/2+\epsilon})^2 \cdot k\ll k^{2+\epsilon}. \qedhere
\end{equation}
\end{proof}
\begin{cor} \label{skocor}
 For $F_f \in \skk$ a normalized Hecke eigenform, one has the `trivial' bound on its size: $\sup_\h |F^\circ_f|/\norm{F^\circ_f} \ll k^{1+\epsilon}$, provided $F^\circ_f \ne 0$.
\end{cor}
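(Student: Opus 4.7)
The plan is to deduce this corollary directly from the immediately preceding proposition, which gives $\sup(\skk^\circ) \ll k^{2+\epsilon}$, via the standard reproducing kernel argument. Since $F_f^\circ \in \skk^\circ = (S_k \otimes S_k)^\circ$ by \lemref{SK0tensor}, we may apply the reproducing property of the Bergman kernel
\begin{equation*}
  B(\tau,\tau';\tau_0,\tau'_0) = \sumn_{G \in B_k^\circ} \frac{G(\tau,\tau') \overline{G(\tau_0,\tau'_0)}}{\lan G, G \ran}
\end{equation*}
of the finite-dimensional Hilbert space $\skk^\circ$.

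First, I would write $F_f^\circ(\tau_0,\tau'_0) = \lan F_f^\circ, B(\cdot,\cdot;\tau_0,\tau'_0)\ran$, which by Cauchy--Schwarz gives $|F_f^\circ(\tau_0,\tau'_0)|^2 \le \norm{F_f^\circ}^2 \cdot B(\tau_0,\tau'_0;\tau_0,\tau'_0)$. Multiplying both sides by $(v_0 v'_0)^k$ (recall $v_0 = \im \tau_0$, $v'_0 = \im \tau'_0$) and taking the supremum over $\h \times \h$, the right-hand factor becomes precisely $\sup(\skk^\circ)$ in the notation of \eqref{pullback-bkdef}. Hence
\begin{equation*}
  \sup \nolimits_{(\tau_0,\tau'_0) \in \h \times \h} (v_0 v'_0)^k |F_f^\circ(\tau_0,\tau'_0)|^2 \le \norm{F_f^\circ}^2 \cdot \sup(\skk^\circ).
\end{equation*}

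Combining this with the bound $\sup(\skk^\circ) \ll k^{2+\epsilon}$ established in the proposition just above yields $\sup_{\h \times \h} |F_f^\circ|/\norm{F_f^\circ} \ll k^{1+\epsilon}$, with the convention (as used in \corref{skpcor} and consistent with the notation $\norm{F^\circ}_\infty$ introduced in the conjecture preceding \corref{maxrank}) that the sup includes the weight $(v_0 v'_0)^{k/2}$. There is no real obstacle here: the whole content of the corollary is packaged in the previous proposition, and the only step is the reproducing-kernel Cauchy--Schwarz inequality applied in the subspace $\skk^\circ$, which is legitimate precisely because $F_f^\circ$ lies in this subspace (by \lemref{SK0tensor}). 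The hypothesis $F_f^\circ \neq 0$ is needed only to ensure $\norm{F_f^\circ} > 0$ so the quotient makes sense.
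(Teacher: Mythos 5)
Your argument is correct and is exactly the deduction the paper intends (the paper states the corollary without proof because it follows immediately from the preceding proposition by the standard Bergman-kernel/Cauchy--Schwarz inequality applied inside $\skk^\circ$, which is legitimate since $F_f^\circ$ lies in that space by definition). The only cosmetic point is that $\sup(\skk^\circ)^{1/2} \ll k^{1+\epsilon/2}$, which of course still gives $k^{1+\epsilon}$ after renaming $\epsilon$.
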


Note that we of course have the bound $\sup_\h |F^\circ_f|/\norm{F_f} \ll k^{5/4} $ from \corref{skpcor} or from \thmref{mainthm1} by restriction of domain. If we assume the conjectural size $N(F_f) \sim 2$ as $k \to \infty$ (cf. \cite{liu2014growth}, \cite{BKY}), then it is immediate from \corref{skocor} that $ \sup_\h |F^\circ_f|/\norm{F_f} \ll k^{1+\epsilon} $, and $k^{1/2+\epsilon}$ if we expect that \corref{skocor} can be improved to $k^{1/2+\epsilon}$. To see this, we write and note $|F^\circ_f|/\norm{F_f} = |F^\circ_f|/\norm{F^\circ_f} \cdot \norm{F^\circ_f}/\norm{F_f} \asymp |F^\circ_f|/\norm{F^\circ_f} \cdot N(F_f)$. However, the current record for the size of $N(F_f)$ is $N(F_f) \ll k^{1-\delta}$ (cf. \cite{das-anamby-jnt}) for a very small $\delta \approx 0.0047$ and this does not help (unconditionally) towards the betterment of \corref{skpcor} that one would like to have.

\subsection{Appendix to section~\ref{pullback-sec}}
\label{appendix1}
In this section, we calculate the $L^\infty$ size of the image of the Witt operator on $S^2_k$ which we define by (so $W(F)=F^\circ$ from the previous section)
\begin{align}
    W(F)(\tau,\tau') := F_{z=0}.
\end{align}
Since the arguments here are closely related to our ideas in this paper, and especially to this section, we feel it is apt to include this here as an appendix. Assume that $k$ is even. Recall that our point of interest is 
\begin{align} \label{witt-sup}
\sup(W(S^2_k)) = \sup\nolimits_{\tau_1,\tau_2 \in \h} \sumn_{F \in \mc{BW}(S^2_k)} \frac{(v_1 v_2)^{k/2} |W(F)|^2}{\lan W(F), W(F) \ran},
\end{align}
where $\mc{BW}(S^2_k)$ is an orthonormal basis for $W(S^2_k)$.
The main idea is to find a convenient basis for $W(S^2_k)$ and work with it, owing to the invariance of the Bergman kernel with respect to the choice of a basis.

First, note that
the size of $S_k \otimes S_k$ is clearly $k^3$ as its BK is the product of that of $S_k$ with itself. Secondly, note that since $F(\smat{\tau}{z}{z}{\tau'}) = F(\smat{\tau'}{z}{z}{\tau})$ for all $F \in S^2_k$, one has $W(F)   \in (S_k \otimes S_k)^{\mrm{sym}}$, where $(S_k \otimes S_k)^{\mrm{sym}}$ be the space of all symmetric tensors in $S_k \otimes S_k$. Its dimension is $s_k(s_k+1)/2$, where we have put $s_k = \dim S_k$. From \cite[1.5~Hilfssatz, Folgerung, p.150]{Fr} we know that when $k$ is even, $W$ is surjective. Therefore, if we want to find the size of the space of pullbacks $W(S^2_k)$ for $S^2_k$ -- we merely have to choose an orthonormal basis of  $(S_k \otimes S_k)^{\mrm{sym}}$.
Since $2$ is invertible for us, it is clear that $(S_k \otimes S_k)^{\mrm{sym}} = (S_k \otimes S_k)^{\mrm{diag}}$ where $(S_k \otimes S_k)^{\mrm{diag}} = \{ \sum_{i=1}^s c_i g_i \otimes g_i, \, g_i \in S_k, \, s \ge 1 \}$.
This will indicate how much of the $L^\infty$ mass of $S^2_k$ is supported by the diagonal.

We will prove that its size is $k^3$. To see this, first put $s(f,g)= f\otimes g + g \otimes f$.
Further, let $\{g_j\}_j$ be the normalized Hecke basis for $S_k$.
We note that $s(g_i, g_j)$, $i \le j$, $1 \le j \le s_k$ is a basis for 
$(S_k \otimes S_k)^{\mrm{sym}}$. Further, the elements of this basis are pairwise orthogonal. To see this, note that 
\begin{align} \label{pullb-ortho}
\lan s(g_1,g_2), s(g_3,g_4) \ran = 2\lan g_1, g_3 \ran \lan g_2, g_4 \ran + 2\lan g_1, g_4 \ran \lan g_2, g_3 \ran.
\end{align}
Suppose that (without loss of generality) $g_1,g_3$ are unequal, hence orthogonal. Then \eqref{pullb-ortho}  reduces to $2\lan g_1, g_4 \ran \lan g_2, g_3 \ran$ which can be non-zero if and only if $g_1=g_4, g_2=g_3$.

As before, the quantity which measures the size is the BK: we sum over the modulus square of the elements in an orthonormal basis. Since the BK does not depend on the choice of the orthonormal basis, we can write
\begin{align} \label{pullb-bk}
    \sup((S_k \otimes S_k)^{\mrm{sym}}) = \sumn_{i \le j} \frac{ (vv')^k |s(g_i,g_j)|^2 }{\lan  s(g_i,g_j),  s(g_i,g_j) \ran}.
\end{align}

Let $P_n$ be the $n$-th Poincar\'e series in $S_k$. We can write 
\begin{align}
s(P_m , P_n) = \frac{\Gamma(k-1)^2}{(4 \pi m)^{k-1} (4 \pi n)^{k-1}} \sumn_{i \le j} \frac{ ( \overline{a_i(m)} \overline{a_j(n)} + \overline{a_i(n)} \overline{a_j(m)} ) }{\lan  s(g_i,g_j),  s(g_i,g_j) \ran} s(g_i , g_j);
\end{align}
from which we get a Petersson formula: 
\begin{align}
\frac{\Gamma(k-1)^2}{(4 \pi m)^{k-1} (4 \pi n)^{k-1}} \sumn_{i \le j} |a_i(m) a_j(n) + a_i(m) a_j(n)|^2 = p_m(m)p_n(n).
\end{align}
Now putting $m=n=1$ and following the method in section~\ref{lbd-sk}, the lower bound $k^3$ is immediate.

For the upper bound, let $h_i$ denote the orthonormalized $g_i$. Then the RHS of \eqref{pullb-bk} can be bounded (up to absolute constants) by
\begin{align}
    \sumn_{i \le j} (v^k |h_i(\tau)|^2 \, v'^k |h_j(\tau')|^2 + v^k |h_j(\tau)|^2 \, v'^k |h_i(\tau')|^2),
\end{align}
which is bounded by $(\sum_i v^k |h_i(\tau)|^2) (\sum_j v^k |h_j(\tau')|^2) \ll k^3$ by \cite[Section 7.2]{sd-hk}.

Summarizing the above calculations, we can now state the following. 
\begin{thm} \label{witt-thm}
With the above notation and setting, if $k$ is even, $\sup(W(S^2_k)) \asymp k^3.$
\end{thm}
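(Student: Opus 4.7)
The plan is to reduce the computation to the Bergman kernel of the symmetric square $(S_k \otimes S_k)^{\mrm{sym}}$, and then evaluate it using an explicit orthogonal basis. First I would observe that for any $F \in S^2_k$ the diagonal $F(\smat{\tau}{z}{z}{\tau'})$ is symmetric under $\tau \leftrightarrow \tau'$, so $W(S^2_k) \subseteq (S_k \otimes S_k)^{\mrm{sym}}$. The key input is Freitag's result (\cite[1.5 Hilfssatz, Folgerung, p.~150]{Fr}) that $W$ is surjective onto $(S_k \otimes S_k)^{\mrm{sym}}$ for even $k$. Since the Bergman kernel depends only on the subspace and not on the chosen orthonormal basis, the quantity $\sup(W(S^2_k))$ equals the supremum of the BK of $(S_k \otimes S_k)^{\mrm{sym}}$.

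Next I would produce an orthogonal (but not orthonormal) basis. Let $\{g_j\}$ be the normalized Hecke basis of $S_k$ and set $s(g_i,g_j) = g_i \otimes g_j + g_j \otimes g_i$ for $i \le j$. A direct computation using the identity
\begin{equation}
\lan s(g_1,g_2), s(g_3,g_4) \ran = 2\lan g_1, g_3\ran \lan g_2,g_4\ran + 2\lan g_1,g_4\ran \lan g_2,g_3\ran,
\end{equation}
combined with multiplicity-one on $S_k$, shows that distinct basis elements $s(g_i,g_j)$ are orthogonal. So the BK of the space takes the explicit form
\begin{equation}
  \sumn_{i \le j} \frac{ (vv')^k |s(g_i,g_j)(\tau,\tau')|^2}{\lan s(g_i,g_j), s(g_i,g_j)\ran}.
\end{equation}

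For the upper bound, I would orthonormalize to $h_i = g_i/\norm{g_i}$, bound each summand absolutely, and then factorize:
\begin{equation}
  \sumn_{i\le j}  (vv')^k \big(|h_i(\tau)h_j(\tau')|^2 + |h_j(\tau)h_i(\tau')|^2 \big) \le 2 \Big( \sumn_i v^k |h_i(\tau)|^2 \Big) \Big(\sumn_j v'^k |h_j(\tau')|^2\Big),
\end{equation}
which is $\ll k^3$ by the standard upper bound $\sumn_i v^k |h_i(\tau)|^2 \ll k^{3/2}$ (see \cite[Section 7.2]{sd-hk}).

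For the lower bound, I would mimic the strategy of section~\ref{lbd-sk}: replace each $g_i$ by the Poincar\'e series $P_n$ in $S_k$ to derive a Petersson-type formula, namely
\begin{equation}
\frac{\Gamma(k-1)^2}{(4\pi m)^{k-1}(4\pi n)^{k-1}} \sumn_{i \le j} |a_i(m) a_j(n) + a_i(n)a_j(m)|^2 = p_m(m) p_n(n),
\end{equation}
and then specialize to $m=n=1$. Inserting this with the choice $v_0 = v_0' = k/(4\pi)$ (so that $(v_0)^k e^{-4\pi v_0} \asymp k^k e^{-k}$) and using Stirling together with the known positivity $p_1(1) \asymp (4\pi)^{k-1}/\Gamma(k-1)$ yields the desired lower bound $k^3$. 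The main obstacle I anticipate is the careful verification of orthogonality (in particular handling the diagonal $i=j$ carefully so the norms do not vanish or conflict with the $i<j$ case), but this is routine once multiplicity-one is invoked; everything else is a direct assembly of ingredients already used in this paper.
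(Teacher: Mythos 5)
Your proposal is correct and follows essentially the same route as the paper: identify $W(S^2_k)$ with $(S_k\otimes S_k)^{\mrm{sym}}$ via symmetry plus Freitag's surjectivity, take the orthogonal basis $s(g_i,g_j)$, factorize the Bergman kernel to get the upper bound $\ll k^3$ from \cite[Section~7.2]{sd-hk}, and derive the lower bound via Poincar\'e series and a Petersson-type formula with $m=n=1$. The diagonal case $i=j$ poses no real issue (one just gets $\lan s(g_i,g_i),s(g_i,g_i)\ran = 4\norm{g_i}^4 \ne 0$ from the displayed inner-product identity), so the argument goes through exactly as you outline.
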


\printbibliography
\end{document}